\documentclass{amsart}
\usepackage{amssymb}
\usepackage{amsfonts}
\usepackage{hyperref}

\newtheorem{theorem}{Theorem}[section]
\theoremstyle{plain}

\newtheorem{conjecture}{Conjecture}
\newtheorem{corollary}{Corollary}[section]

\newtheorem{lemma}{Lemma}

\newtheorem{proposition}{Proposition}
\newtheorem{remark}{Remark}

\numberwithin{equation}{section}

\begin{document}
\title[Sharp power mean bounds for the hypergeometric mean]{The second type
of power mean bounds for the hypergeometric mean}
\author{Zhen-Hang Yang and Jing-Feng Tian}
\address{Zhen-Hang Yang \\
Zhejiang Society for Electric Power, Hangzhou, Zhejiang, 310014, P. R. China}
\email{\url{yzhkm@163.com}}
\address{Jing-Feng Tian \\
Department of Mathematics and Physics\\
North China Electric Power University \\
Yonghua Street 619, 071003, Baoding, P. R. China}
\email{\url{tianjf@ncepu.edu.cn}}
\subjclass[2000]{Primary 33C05, 41A58; Secondary 26D15, 26E60}
\keywords{Hypergeometric function, power mean, NP-type power series,
inequality}

\begin{abstract}
In this paper, by using signs rule for a class of special power series and
for the Laplace integrals, we find the best power mean bounds for the
hypergeometric function (sharp at the right endpoint). Our results contain
many known conclusions.
\end{abstract}

\maketitle


\section{Introduction}

This paper is a companion to \cite{Yang-arXiv:2610.10581}, and the aim is to
deal with the second type of power mean bounds for the Gaussian
hypergeometric function defined by%
\begin{equation*}
F\left( a,b;c;x\right) =\sum_{n=0}^{\infty }\frac{\left( a\right) _{n}\left(
b\right) _{n}}{\left( c\right) _{n}}\frac{x^{n}}{n!}\text{, \ }x\in \left(
0,1\right) ,
\end{equation*}%
where $a,b,c\in \mathbb{R}$ with $-c\notin \mathbb{N}\cup \left\{ 0\right\} $%
, $\left( a\right) _{n}=\Gamma \left( n+a\right) /\Gamma \left( a\right) $,
here $\Gamma (x)$ is the gamma function. The meaning of the first and second
type of power mean bounds for the Gaussian hypergeometric function is
explained in Remark \ref{R-12Hm-Pm}.

The hypergeometric mean value of positive numbers $x_{k}$ ($1\leq k\leq n$)
was defined by Carlson \cite{Carlson-PAMS-16-1965} using the R-function (see
\cite{Carlson-JMAA-7-1963}). In particular, the Gaussian hypergeometric
function $\left[ F\left( -a,b;c;1-x\right) \right] ^{1/a}$ is a mean of $1$
and $x$ provided $c>b>0$ for $x\in \left( 0,1\right) $. When $a=0$, this
mean is regarded as its limit at $a=0$, that is,%
\begin{equation*}
\lim_{a\rightarrow 0}\left[ F\left( -a,b;c;1-x\right) \right] ^{1/a}=\exp %
\left[ -\sum_{n=1}^{\infty }\frac{\left( b\right) _{n}}{\left( c\right) _{n}}%
\frac{x^{n}}{n}\right] .
\end{equation*}
Carlson \cite[Theorem 1]{Carlson-PAMS-16-1966} provided power mean bounds
for the hypergeometric mean:
\begin{equation}
\left[ F\left( -a,b;c;1-x\right) \right] ^{1/a}>\left( <\right) M_{a}\left(
x;\frac{b}{c}\right) \text{ if }a<\left( >\right) 1  \label{CI-a<>1}
\end{equation}%
for $x\in \left( 0,1\right) $, where%
\begin{equation*}
M_{p}\left( x;w\right) =\left( wx^{p}+1-w\right) ^{1/p}\text{ if }p\neq 0%
\text{ and }M_{0}\left( w,x\right) =x^{w}
\end{equation*}%
is the power mean of order $p\in \mathbb{R}$ of $1$ and $x$ with the weight $%
w\in \left( 0,1\right) $. It is known that $p\mapsto M_{p}\left( x;w\right) $
is increasing on $\mathbb{R}$.

Since 2001, Barnard and Richards found several improvements of Carlson's
inequality (\ref{CI-a<>1}) in \cite[Theorem 1]{Barnard-CMFT-1-2001}, \cite[%
Theorem 1]{Richards-JMAA-308-2005}, \cite{Barnard-JIPAM-8-2007}. It is worth
mentioning that, Richards in 2005 \cite[Theorem 1]{Richards-JMAA-308-2005}
obtained a very nice result using his unique tool, which states that

\begin{theorem}[{\textbf{\protect\cite[Theorem 1]{Richards-JMAA-308-2005}}}]
\textbf{\label{TR}}Suppose $a\leq 1$, $b>0$\ and $c\geq \max \left\{
-a,b\right\} $. If $c\geq \max \left\{ 1-2a,2b\right\} $, then%
\begin{equation}
\left[ F\left( -a,b;c;x\right) \right] ^{1/a}\geq M_{p}\left( 1-x;\frac{b}{c}%
\right)  \label{Hm>Pm-p}
\end{equation}%
\ for all $x\in \left( 0,1\right) $ if and only if $p\leq p_{0}$, where%
\begin{equation}
p_{0}=\frac{a+c}{1+c}.  \label{p0}
\end{equation}%
If $c\leq \min \left\{ 1-2a,2b\right\} $, then%
\begin{equation}
\left[ F\left( -a,b;c;x\right) \right] ^{1/a}\leq M_{p}\left( 1-x;\frac{b}{c}%
\right)  \label{Hm<Pm-p}
\end{equation}%
for all $x\in \left( 0,1\right) ,$if and only if $p\geq p_{0}$. If $%
c=1-2a=2b $; or $c=\max \left\{ -a,b\right\} $; or $a=1$, then the equality
holds.
\end{theorem}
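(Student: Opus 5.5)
Write $w=b/c$ and $F(x)=F(-a,b;c;x)$; the statement compares $F(x)^{1/a}$ with $M_p(1-x;w)$. I would first settle the necessity of $p\le p_0$ (resp.\ $p\ge p_0$) by expanding both sides at $x=0$. One has $F(x)=1-\frac{ab}{c}x+\frac{a(a-1)b(b+1)}{2c(c+1)}x^2+O(x^3)$, so $F^{1/a}=1-wx+\alpha_2x^2+O(x^3)$. On the other side $M_p(1-x;w)=\bigl(1-w+w(1-x)^p\bigr)^{1/p}=1-wx+\beta_2(p)x^2+O(x^3)$, where $\beta_2(p)=-\tfrac12 w(1-w)(1-p)$ is strictly increasing in $p$ (because $w\in(0,1)$). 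A direct computation should show that $\alpha_2=\beta_2(p_0)$, i.e.\ $p_0=(a+c)/(1+c)$ is exactly the order matching the second coefficient. Hence if (\ref{Hm>Pm-p}) holds near $0$ we need $p\le p_0$, and symmetrically for (\ref{Hm<Pm-p}). Since $p\mapsto M_p$ is increasing, sufficiency reduces to proving the single inequality at $p=p_0$.

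For sufficiency at $p=p_0$ I would transform the inequality into a sign statement for a power series. Put $y=1-x$. The inequality $F^{1/a}\ge M_{p_0}$ (with care about the sign of $a$ and of $p_0$) is equivalent to comparing $F(x)^{p_0/a}$ with $1-w+w(1-x)^{p_0}$. Rather than working with the fractional power directly, I would study
\[
\Phi(x)=\frac{a}{p_0}\,\ln\bigl(1-w+w(1-x)^{p_0}\bigr)-\ln F(x),
\]
and differentiate. Using $F'(x)=-\frac{ab}{c}F(-a+1,b+1;c+1;x)$, the sign of $\Phi'$ becomes the sign of a combination
\[
G(x)=\frac{ab}{c}F(1-a,b+1;c+1;x)\bigl(1-w+w(1-x)^{p_0}\bigr)-a w(1-x)^{p_0-1}F(x).
\]
Multiplying by $(1-x)^{1-p_0}$ and expanding everything as power series in $x$, I would apply Euler/Pfaff transformations to bring all terms to hypergeometric series in $x$. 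The resulting coefficients $u_n$ should satisfy: $u_0=u_1=0$ by the choice of $p_0$, and the sign of $u_n$ is controlled by factors such as $(c-1+2a)$ and $(c-2b)$. When both have the same sign (the two regimes $c\ge\max\{1-2a,2b\}$ and $c\le\min\{1-2a,2b\}$), all $u_n$ have a constant sign. Alternatively, they change sign exactly once, in the manner of Richards' tool or an NP-type series (negative first, then positive). In the single-sign-change case I would combine this with the boundary behaviour $\Phi(0)=0$ and the value of $\Phi(1^-)$ to conclude that $\Phi$ keeps its sign. The hypotheses $c\ge\max\{-a,b\}$ guarantee that $F(1)$ is finite and positive, which is what the limit at $x=1$ requires.

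The equality cases are checked directly. For $a=1$, $F$ is linear and $p_0=1$. For $c=b$ or $c=-a$, $F$ reduces to a binomial $(1-x)^{a}$ or a power mean. For $c=1-2a=2b$, a quadratic transformation gives $F(-a,b;2b;x)$ in closed form, and $p_0$ matches it.

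The main obstacle is the coefficient analysis. I must obtain explicit coefficients $u_n$ after multiplying the series $(1-x)^{p_0}$ by hypergeometric series, and show that their sign is governed exactly by $(c-1+2a)(c-2b)$. I would try to compute $u_n$ as $\frac{(\cdot)_n}{(\cdot)_n}$ times a quadratic in $n$, and then verify that quadratic's sign under the stated constraints.
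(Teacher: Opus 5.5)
The paper does not prove this theorem; it quotes it from Richards. Judged on its own, your necessity argument is correct: with $w=b/c$ one gets $\alpha_2=-\tfrac12 w(1-w)\frac{1-a}{c+1}$, which equals $\beta_2(p)$ exactly when $p=p_0$. Your equality cases also check out (for $c=1-2a=2b$, $F(-a,b;c;x)=\bigl(\frac{1+\sqrt{1-x}}{2}\bigr)^{2a}$). The reduction of sufficiency to the sign of one function is the right one; it is the paper's $f_{p_0}$, $g_{p_0}$ from (\ref{fp})--(\ref{gp-a1}). Use $\ln f_{p_0}$ rather than your $\Phi$, whose factor $a$ makes it degenerate at $a=0$.

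\textbf{The gap.} The central step, that the coefficients have one sign in both regimes, is only asserted, and the route you propose for it cannot work. The coefficients of $g_{p_0}$ are convolutions
$\xi_n=\sum_{k=0}^n\frac{(1-p_0)_{n-k}}{(n-k)!}\frac{(1-a)_k(b)_k}{k!(c+1)_k}-\frac{(1-a)_n(b+1)_n}{n!(c+1)_n}$.
These are terminating ${}_3F_2(1)$ sums, balanced (hence Pfaff--Saalsch\"utz summable) only when $p=1-a+b-c$. Equivalently, Euler's transformation absorbs the factor $(1-x)^{p-1}$ only for that exponent, not in general for $p_0$. So there is no formula ``hypergeometric term times a quadratic in $n$'' for the coefficients. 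Your extra factor $(1-x)^{1-p_0}$ also has negative Taylor coefficients after the constant term, so it need not preserve coefficient signs.

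\textbf{The missing idea.} The quadratic you expect lives in a recurrence, not in the coefficients themselves. By Proposition~\ref{P-PH-rr} and Lemma~\ref{L-xin-rr-a1}, with $u_n,v_n$ as in (\ref{un}), (\ref{vn}), the quantity $\eta_n=\xi_n-\theta_{n-1}\xi_{n-1}$ satisfies $\eta_{n+1}-\lambda_n\eta_n=\tau_n\gamma_n u_{n-1}$. At $p=p_0$ one has $\xi_0=\xi_1=0$, hence $\eta_1=0$, and (\ref{gamn}) factors as
\[
\gamma_n(p_0)=\frac{n(a+c)}{c+1}\bigl[(n-1)\delta(p_0)+(1-a)(c-2b)\bigr],\qquad (c+1)\delta(p_0)=\frac{c}{2}(c-1+2a)+\frac{c+1}{2}(c-2b).
\]
So $\gamma_n(p_0)\ge 0$ for all $n$ when $c\ge\max\{1-2a,2b\}$, and $\gamma_n(p_0)\le 0$ when $c\le\min\{1-2a,2b\}$. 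This is exactly the sign structure your $(c-1+2a)$, $(c-2b)$ intuition predicted.

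Since $1-p_0=\frac{1-a}{c+1}>0$, we have $u_n,\theta_n,\tau_n>0$, and $\lambda_n>0$ for $n\ge 2$; $\lambda_1$ is not needed because $\eta_1=0$. Induction then gives $\eta_n$, and hence $\xi_n=\eta_n+\theta_{n-1}\xi_{n-1}$, the sign of $\gamma_n(p_0)$ for every $n$. Therefore $g_{p_0}$ has constant sign on $(0,1)$, $f_{p_0}$ is monotone with $f_{p_0}(0^+)=1$, and (\ref{Hm>Pm-p}), respectively (\ref{Hm<Pm-p}), hold at $p=p_0$. When $c=1-2a=2b$ or $c=-a$, $\gamma_n(p_0)\equiv 0$ forces $g_{p_0}\equiv 0$, which is the equality case.

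No sign change and no information at $x=1$ is needed. Your fallback (one sign change plus $\Phi(1^-)$) is therefore unnecessary here. As sketched it would not suffice anyway, since it needs the comparison of $p_0$ with $p_0^{\ast}$ made in Theorem~\ref{T-YT}(ii),(iv).
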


\begin{remark}
\label{R-12Hm-Pm}(i) Theorem \ref{TR} provides a type of sharp power mean
bounds $M_{p_{0}}\left( 1-x;b/c\right) $ for the hypergeometric mean $\left[
F\left( -a,b;c;x\right) \right] ^{1/a}$, which is sharp at $x=0$. For
convenience, we call them \textquotedblleft the first type of power mean
bounds\textquotedblright . The inequalities (\ref{Hm>Pm-p}) and (\ref%
{Hm<Pm-p}) for $p=p_{0}$ can be abbreviated as by \textquotedblleft the
first Hm$>$Pm inequality\textquotedblright\ and \textquotedblleft the first
Hm$<$Pm inequality\textquotedblright , respectively, or be collectively
called \textquotedblleft the first Hm-Pm inequality
(inequalities)\textquotedblright .

(ii) Similarly, if the hypergeometric mean $\left[ F\left( -a,b;c;x\right) %
\right] ^{1/a}$ has the sharp power mean bounds $M_{p_{0}^{\ast }}\left(
1-x;b/c\right) $ which is sharp at $x=1$, then we call them
\textquotedblleft the second type of power mean bounds\textquotedblright .
The inequalities (\ref{Hm>Pm-p}) and (\ref{Hm<Pm-p}) for $p=p_{0}^{\ast }$
can be abbreviated as by \textquotedblleft the second Hm${>}$Pm
inequality\textquotedblright\ and \textquotedblleft the second Hm${<}$Pm
inequality\textquotedblright , respectively, or be collectively called
\textquotedblleft the second Hm-Pm inequality
(inequalities)\textquotedblright .
\end{remark}

\begin{remark}
\label{R-p0*<1}To ensure $p_{0}^{\ast }$ exists, it is clearly required that
$c>\max \left\{ -a,b,b-a\right\} >0$. Thus,%
\begin{equation}
p_{0}^{\ast }=\frac{a\ln \left( 1-b/c\right) }{\ln \frac{\Gamma \left(
c\right) \Gamma \left( c+a-b\right) }{\Gamma \left( c+a\right) \Gamma \left(
c-b\right) }}\text{ if }a\neq 0\text{ and }p_{0}^{\ast }=\frac{\ln \left(
1-b/c\right) }{\psi \left( c-b\right) -\psi \left( c\right) }.  \label{p0*}
\end{equation}%
Carlson \cite[Eq. (2.7)]{Carlson-PAMS-16-1966} pointed out that $a\mapsto %
\left[ F\left( -a,b;c;x\right) \right] ^{1/a}$ is increasing on $\mathbb{R}$%
, then so is
\begin{equation*}
a\mapsto \frac{1}{a}\ln F\left( -a,b;c;1\right) =\frac{1}{a}\ln \frac{\Gamma
\left( c\right) \Gamma \left( c+a-b\right) }{\Gamma \left( c+a\right) \Gamma
\left( c-b\right) }.
\end{equation*}%
This implies the following two conclusions:

(i) $p_{0}^{\ast }$ is an increasing function of $a$ on $\mathbb{R}$, and
therefore, $0<p_{0}^{\ast }<\left( >\right) 1$ if $a<\left( >1\right) $.

(ii) If $a<1$, then%
\begin{equation}
\frac{1}{a}\ln \frac{\Gamma \left( c\right) \Gamma \left( c+a-b\right) }{%
\Gamma \left( c+a\right) \Gamma \left( c-b\right) }<\ln \left( 1-\frac{b}{c}%
\right) <0.  \label{1/alng<0}
\end{equation}
\end{remark}

Recently, Yang and Tian \cite{Yang-arXiv:2610.10581} found the necessary and
sufficient conditions for the first Hm-Pm inequalities to hold in all
possible cases, which records as follows.

\begin{theorem}
\label{T-YT}Suppose that $a<1$, $b>0$ and $c>\max \left\{ -a,b\right\} $.
Let $p_{0}$ and $p_{0}^{\ast }$ be defined by (\ref{p0}) and (\ref{p0*}),
respectively.

(i) If $a-b+c\geq p_{0}$, then the first Hm${>}$Pm inequality holds for $%
x\in \left( 0,1\right) $ if and only if $c\geq 2b$.

(ii) If $a-b+c<p_{0}$, then the first Hm${>}$Pm inequality holds for $x\in
\left( 0,1\right) $ if and only if $a-b+c>0$, $c\geq 2b$ and $p_{0}\leq
p_{0}^{\ast }$.

(iii) If $a-b+c\leq p_{0}$, then the first Hm${<}$Pm inequality holds for $%
x\in \left( 0,1\right) $ if and only if $c\leq 2b$.

(iv) If $a-b+c>p_{0}$, then the first Hm${<}$Pm inequality holds for $x\in
\left( 0,1\right) $ if and only if $c\leq 2b$ and $p_{0}\geq p_{0}^{\ast }$.
\end{theorem}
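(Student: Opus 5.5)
The plan is to reduce the two-sided comparison to the sign pattern of a single power series in $x$, and then use the sign rule for such series (the NP-type criterion from \cite{Yang-arXiv:2610.10581}) together with the endpoint behaviour at $x=0$ and $x=1$.

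\textbf{Reformulation.} Write $w=b/c$ and $p=p_0$. By Remark~\ref{R-12Hm-Pm}(i), the first Hm$>$Pm inequality with $a<1$ and $a\neq 0$ is equivalent to a fixed-sign statement. Using the monotonicity of $t\mapsto t^{1/a}$ and $t\mapsto t^{p}$ (with signs tracked according to the sign of $a$), it becomes
\begin{equation*}
D(x):=\frac{1}{a}\ln F\left( -a,b;c;x\right) -\frac{1}{p}\ln \left( 1-w+w(1-x)^{p}\right) >0,\quad x\in (0,1);
\end{equation*}
the case $a=0$ is handled by the limiting form. Expand both terms in powers of $x$. By construction $D(x)=O(x^{2})$ at $0$. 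The $x^{2}$ coefficient is a positive multiple of $c-2b$, which is exactly Richards' threshold in Theorem~\ref{TR}. Hence $c\geq 2b$ is necessary in (i) and (ii), and $c\leq 2b$ is necessary in (iii) and (iv).

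\textbf{Behaviour at $x=1$.} As $x\to1^{-}$, the first term tends to $\frac1a\ln\frac{\Gamma(c)\Gamma(c+a-b)}{\Gamma(c+a)\Gamma(c-b)}$ when $a-b+c>0$; otherwise it diverges to $-\infty$ (or to $+\infty$ depending on sign), which is what makes $a-b+c>0$ necessary in (ii). The second term tends to $\frac1p\ln(1-w)$. So $D(1^-)\geq0$ is equivalent to $p_0\leq p_0^{*}$ by \eqref{p0*} and \eqref{1/alng<0}, and reversing the inequality gives $p_0\geq p_0^*$ for (iv). These give the remaining necessary conditions.

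\textbf{Sufficiency.} Consider $(1-x)\,D'(x)$, or the ratio
\begin{equation*}
\frac{F'(-a,b;c;x)}{F(-a,b;c;x)}\quad\text{against}\quad\frac{w(1-x)^{p-1}}{1-w+w(1-x)^{p}}.
\end{equation*}
After multiplying by the positive denominators, the problem reduces to the sign of one power series $\sum u_n x^n$. I would show that the coefficients $u_n$ change sign at most once, using Gauss contiguous relations to obtain a recurrence for $u_n$. The pattern is that the first coefficients have the sign of $c-2b$, while the tail has the sign of $(a-b+c)-p_0$. This explains the split between (i)/(iii) and (ii)/(iv).

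If no sign change occurs, which is the case $a-b+c\geq p_0$ with $c\geq 2b$, then $D'>0$ and we are done. If one sign change occurs, then by the NP-type series lemma $D$ is first increasing and then decreasing on $(0,1)$. Therefore $D>0$ on $(0,1)$ if and only if $D(0)=0$ and $D(1^-)\geq0$, which is the condition $p_0\leq p_0^{*}$. Case (iv) is symmetric.

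\textbf{Main obstacle.} The hard step is proving the single sign change of $u_n$. The product of a hypergeometric series with the binomial-type expansion of $(1-x)^{p}$ does not give a closed form. I would first try writing $F$ via the Euler integral as a Laplace-type integral in $t=-\ln(1-x)$ and applying the sign rule for Laplace integrals. Failing that, I would prove the ratio $u_{n+1}/u_n$ is eventually monotone through an explicit recurrence.
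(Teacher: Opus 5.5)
Your skeleton is sound, and it matches the machinery this paper sets up. The theorem is only quoted here from the companion paper, but Section 3 and the proof of Theorem \ref{MT-3} contain the needed tools. The skeleton is: pass to $D=\ln f_{p_0}$; reduce $D'$ via (\ref{dfp}) to the sign of $g_{p_0}(x)=\sum_{n\ge 0}\xi_n(p_0)x^n$ from (\ref{gp-a1}); get necessity from the two endpoints; get sufficiency from the sign pattern of the $\xi_n(p_0)$ plus Proposition \ref{P-sgnS}.

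Your analysis at $x=1$ is correct. If $a-b+c\le 0$, then $a<0$ and $D\to-\infty$. Otherwise $D(1^-)=\ln(1-b/c)\,(1/p_0^{\ast}-1/p_0)$, which is $\ge 0$ if and only if $p_0\le p_0^{\ast}$.

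At $x=0$ you are off by one order. $D=O(x^2)$ for every $p$, and $p_0$ is exactly the value that kills the $x^2$ term. The coefficient carrying $c-2b$ is that of $x^3$, namely $\frac{b(c-b)}{3c^2}\xi_2(p_0)$ with $\xi_2(p_0)=\frac{(1-a)(a+c)(c-2b)}{2(c+1)^2(c+2)}$. Necessity of $c\ge 2b$ (resp. $c\le 2b$) survives this correction.

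The genuine gap is the step you flag yourself. You need that $\{\xi_n(p_0)\}$ has no sign change when the sign of $c-2b$ agrees with that of $\delta(p_0)=a-b+c-p_0$, and at most one change otherwise. All of sufficiency rests on this. Knowing the sign of the first nonzero coefficient and the sign of the tail does not exclude intermediate sign changes. Your fallback cannot close this: eventual monotonicity of ratios of consecutive coefficients says nothing about the early terms, and such ratios are meaningless across a sign change. The Laplace-integral route is only named, not carried out.

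The missing idea is the first-order form of the recurrence in Lemma \ref{L-xin-rr-a1}. With $\eta_{n+1}=\xi_{n+1}-\theta_n\xi_n$ one has $\eta_{n+1}-\lambda_n\eta_n=\tau_n\gamma_n u_{n-1}$, where $\theta_n,\tau_n,u_{n-1}>0$ and $\lambda_n>0$ for $n\ge 2$. At $p=p_0$ the term $ab\left(p(c+1)-(c+a)\right)$ in (\ref{gamn}) vanishes, so
\[
\gamma_n(p_0)=n\left[(n-1)p_0\,\delta(p_0)+\gamma_1(p_0)\right],\qquad \gamma_1(p_0)=\frac{(1-a)(a+c)(c-2b)}{c+1}.
\]
Hence the forcing term has the sign of $c-2b$ first and the sign of $\delta(p_0)$ eventually, changing sign at most once.

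Since $\xi_0=\xi_1(p_0)=0$, you get $\eta_1=0$ and $\eta_2=\tau_1\gamma_1(p_0)$, so $\lambda_1$ plays no role. The inductions of Cases 1 and 2 in the proof of Theorem \ref{MT-3} then give:
\begin{itemize}
\item in (i) and (iii), all $\xi_n(p_0)$ have one sign;
\item in (ii), at most one sign change, from $+$ to $-$;
\item in (iv), at most one sign change, from $-$ to $+$.
\end{itemize}
Only with this in hand does your unimodality argument, using $D(0)=0$ and the value $D(1^-)$, complete the proof.
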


For the second Hm-Pm inequalities, in 2010 Barnard, Richards and Tiedeman
\cite{Barnard-JMI-4-2010} made a conjecture as follows.

\begin{conjecture}[{\protect\cite[Conjecture I]{Barnard-JMI-4-2010}}]
\label{Cj-BRT}Let $a\leq 1$, $b>0$\ and $c>b>0$. If $c\geq \max \left\{
1-2a,2b\right\} $, then the second Hm${<}$Pm inequality holds for all $x\in
\left( 0,1\right) $. If $c\leq \min \left\{ 1-2a,2b\right\} $, then the
second Hm${>}$Pm inequality holds for all $x\in \left( 0,1\right) $.
\end{conjecture}

Applying Pales' Comparison Theorem \cite{Pales-JMAA-131-1988}, the authors
verified Conjecture \ref{Cj-BRT} is valid for the special cases $c=2b=2$
(see \cite[Proposition 1]{Barnard-JMI-4-2010}).

\begin{theorem}[{\protect\cite[Proposition 1]{Barnard-JMI-4-2010}}]
\label{T-BRT}Suppose $a\in (-1,1)$. If $a>-1/2$, then for all $r\in (0,1)$,%
\begin{equation*}
M_{\lambda }\left( 1-r,\frac{1}{2}\right) <\left[ F\left( -a,1;2;r\right) %
\right] ^{1/a}<M_{\mu }\left( 1-r,\frac{1}{2}\right)
\end{equation*}%
if $\lambda \leq \left( a+2\right) /3$ (sharp) and $\mu \geq \left( a\ln
2\right) /\ln \left( 1+a\right) $ (sharp). If $a<-1/2$, then the above
double inequality holds for all $r\in (0,1)$ if $\lambda \leq \left( a\ln
2\right) /\ln \left( 1+a\right) $ (sharp) and $\mu \geq \left( a+2\right) /3$
(sharp).
\end{theorem}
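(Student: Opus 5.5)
We prove Theorem \ref{T-BRT} for the case $c=2b=2$ by using the explicit closed form of the hypergeometric mean. Since $(1)_n/(2)_n=1/(n+1)$,
\begin{equation*}
F\left( -a,1;2;r\right) =\frac{1-\left( 1-r\right) ^{a+1}}{\left( a+1\right) r},
\end{equation*}
so with $t=1-r\in (0,1)$ we get
\begin{equation*}
\left[ F\left( -a,1;2;r\right) \right] ^{1/a}=\left( \frac{1-t^{a+1}}{\left( a+1\right) \left( 1-t\right) }\right) ^{1/a}.
\end{equation*}
This is the Stolarsky mean $E_{a+1,1}(1,t)$. The bounds are the weighted power means $M_{p}(t;1/2)=\left( (1+t^{p})/2\right) ^{1/p}$. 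The claim therefore reduces to the optimal comparison between the Stolarsky mean $E_{a+1,1}$ and the power means $M_p=E_{2p,p}$ of two variables.

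For this comparison we invoke Páles' Comparison Theorem \cite{Pales-JMAA-131-1988}, as the authors of \cite{Barnard-JMI-4-2010} did. For $r,s,u,v$ with all parameters of the relevant size, $E_{r,s}\le E_{u,v}$ holds iff $r+s\le u+v$ and $m(r,s)\le m(u,v)$. Here $m(x,y)=(x-y)/(\ln x-\ln y)$ when $x,y>0$, $m(x,y)=(|x|-|y|)/(x-y)$ when $xy\le 0$, and $m(x,y)=0$ otherwise, with the usual sign-split form. We take $(r,s)=(a+1,1)$ and $(u,v)=(2p,p)$. Then $m(2p,p)=p/\ln 2$ for $p>0$, and $m(a+1,1)=a/\ln (1+a)$ for $a\in (-1,1)$, where $a+1>0$.

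The proof of both inequalities runs in four steps.

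\textbf{Step 1 (lower bound).} $M_\lambda<E_{a+1,1}$ requires $3\lambda\le a+2$ and $\lambda/\ln 2\le a/\ln(1+a)$.

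\textbf{Step 2 (upper bound).} $E_{a+1,1}<M_\mu$ requires $a+2\le 3\mu$ and $a/\ln(1+a)\le \mu/\ln 2$.

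\textbf{Step 3 (which condition binds).} In each step the binding condition is whichever threshold is more restrictive. We must compare $(a+2)/3$ with $a\ln 2/\ln(1+a)$. The function $g(a)=a\ln 2/\ln(1+a)-(a+2)/3$ on $(-1,1)$ vanishes at $a=-1/2$ and at $a=1$, and at $a=0$ (limit) it equals $\ln 2-2/3>0$. We show $g>0$ on $(-1/2,1)$ and $g<0$ on $(-1,-1/2)$, for instance via concavity or convexity of $a\mapsto \ln(1+a)/a$ against a suitable auxiliary function, or by directly studying the sign of the derivative. This yields $\lambda\le (a+2)/3$ and $\mu\ge a\ln2/\ln(1+a)$ when $a>-1/2$, and the roles are swapped when $a<-1/2$.

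\textbf{Step 4 (sharpness).} Sharpness follows from asymptotics. At $r\to 0$ ($t\to1$) the second-order expansions of both means agree exactly when $p=(a+2)/3$, which is $p_0$ from (\ref{p0}). At $r\to1$ ($t\to0$) the limits are $(1+a)^{-1/a}$ and $2^{-1/p}$, which match exactly at $p=a\ln2/\ln(1+a)=p_0^\ast$ from (\ref{p0*}). So neither threshold can be improved.

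The main obstacle is Step 3, the sign analysis of $g$ on $(-1,1)$. It is an elementary but delicate one-variable inequality. One must also check the strictness and the case distinctions in Páles' theorem, namely the equality cases and the $p\le 0$ branch of $m$. Those branches do not affect optimality, since both optimal thresholds are positive.
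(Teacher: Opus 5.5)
Your argument is correct in outline, but it does not follow the paper's route. It is essentially the original Barnard--Richards--Tiedeman proof via P\'{a}les' comparison theorem, which the paper only cites.

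\textbf{Your route.} The closed form $F(-a,1;2;r)=\bigl(1-t^{a+1}\bigr)/\bigl((a+1)(1-t)\bigr)$ identifies the hypergeometric mean with the Stolarsky mean $E_{a+1,1}(1,t)$, and $M_p(t;1/2)$ with $E_{2p,p}(1,t)$. For $p>0$ all four parameters are positive, so P\'{a}les' if-and-only-if criterion (logarithmic-mean case) gives the optimal exponents directly as $\min\{p_0,p_0^{\ast}\}$ and $\max\{p_0,p_0^{\ast}\}$. Necessity comes with it, so your Step 4 is only a consistency check.

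\textbf{The paper's route.} It proves the equivalent Corollary \ref{C1}. The $p_0$-bound is quoted from the companion paper. The $p_0^{\ast}$-bound comes from Theorems \ref{MT-3} and \ref{MT-4}: the recurrence for $\xi_n(p_0^{\ast})$ and the sign pattern of $\gamma_n(p_0^{\ast})$ show that $g_{p_0^{\ast}}$ is an NP- (resp.\ PN-) type series. Hence $f_{p_0^{\ast}}$ is strictly unimodal with $f_{p_0^{\ast}}(0)=f_{p_0^{\ast}}(1)=1$. Lemma \ref{L-b=1,c=2} supplies the signs of $p_0^{\ast}-p_0$, $\delta(p_0^{\ast})$, $\Delta\gamma_1(p_0^{\ast})$ and $\gamma_1(p_0^{\ast})$.

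\textbf{Trade-off.} Your route is much shorter and gives sharpness for free. However, it relies on $b=1$, $c=2$ collapsing the hypergeometric mean into a Stolarsky mean, so it cannot reach Corollary \ref{C2} or general $(a,b,c)$. The paper's machinery is general, though it gives only sufficient conditions, and it yields strict inequality automatically.

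\textbf{Points to tighten.}

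First, Step 3 is not delicate. Write $g(a)=h(a)/\bigl(3\ln(1+a)\bigr)$ with $h(a)=3a\ln 2-(a+2)\ln(1+a)$. Then $h(-1/2)=h(0)=h(1)=0$ and $h''(a)=-a/(1+a)^2$. So $h$ is strictly convex on $(-1,0)$ and strictly concave on $(0,1)$, giving $h>0$ on $(-1,-1/2)\cup(0,1)$ and $h<0$ on $(-1/2,0)$. Hence $g<0$ on $(-1,-1/2)$ and $g>0$ on $(-1/2,1)$.

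Second, your description of P\'{a}les' function $m$ is garbled. The case split is on the signs of all four parameters: the logarithmic mean if $\min\{r,s,u,v\}\ge 0$; $(|x|-|y|)/(x-y)$ for both pairs if the parameters straddle $0$; and $-L(-x,-y)$ if all are $\le 0$. Only the first case is used here, and $\lambda\le 0$ follows from monotonicity of $p\mapsto M_p$, so nothing breaks.

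Third, P\'{a}les' theorem gives only the non-strict inequality. Away from the critical exponents, strictness follows from strict monotonicity in $p$. At $\lambda=\min\{p_0,p_0^{\ast}\}$ and $\mu=\max\{p_0,p_0^{\ast}\}$, however, you need a strict version or an extra argument. One option: suppose equality held at some $t_0\in(0,1)$. Sandwiching with P\'{a}les would then make $E_{r,s}(1,t_0)$ constant on a nondegenerate arc of the level set of the tight condition ($L(r,s)=a/\ln(1+a)$, resp.\ $r+s=a+2$). By real analyticity it would be constant on the whole level set. That contradicts comparing its value on the diagonal with its limit at the far end, which is $1$, resp.\ $\sqrt{t_0}$.
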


Motivated by Conjecture \ref{Cj-BRT} and \cite[Proposition 1]%
{Barnard-JMI-4-2010}, the aim of this paper is to find the second power mean
bounds for the hypergeometric means.

The paper is organized as follows. Our main results are stated in Section 2,
where Theorems \ref{MT-3} and \ref{MT-4} are listed in Section 2.1, and
several examples, as applications of our main results, are listed Section
2.2. In Section 3, an important tool, the signs for \textquotedblleft
Negative-Positive type power series\textquotedblright\ is introduced, and
several lemmas which are needed to prove Theorems \ref{MT-3} and \ref{MT-4}
are listed and established. The proofs of Theorems \ref{MT-3} and \ref{MT-4}
are presented in Sections 4 and 5.

\section{Main results and applications}

\subsection{Main results}

Before stating main results, we introduce several notations, that are,%
\begin{eqnarray}
\delta \left( p\right) &=&a-b+c-p,  \label{dp} \\
\Delta \gamma _{1}\left( p\right) &=&-\left( c+3\right) p^{2}+2\left(
2a-2b+2c+ab\right) p-a\left( a+c\right) ,  \label{dg1p} \\
\gamma _{1}\left( p\right) &=&-\left( c+1\right) p^{2}+\left(
2a-2b+2c+3ab+abc\right) p  \label{g1p} \\
&&-a\left( b+1\right) \left( a+c\right) .  \notag
\end{eqnarray}

\begin{theorem}
\label{MT-3}Suppose that (i) $a<1$, $b>0$ and $c>\max \left\{ b,b-a\right\} $%
; (ii) $p_{0}$ and $p_{0}^{\ast }$ be respectively defined by (\ref{p0}) and
(\ref{p0*}) with $p_{0}^{\ast }>p_{0}$. If $\left( a,b,c\right) \in \left(
E_{1,1,\pm 1}\cup E_{1,\pm 1,-1}\right) \backslash E_{0,0,-1}$, where%
\begin{eqnarray*}
E_{1,1,\pm 1} &=&\left\{ \left( a,b,c\right) :\delta \left( p_{0}^{\ast
}\right) \geq 0\text{, }\Delta \gamma _{1}\left( p_{0}^{\ast }\right) \geq
0\right\} , \\
E_{1,\pm 1,-1} &=&\left\{ \left( a,b,c\right) :\delta \left( p_{0}^{\ast
}\right) >0\text{, }\gamma _{1}\left( p_{0}^{\ast }\right) \leq 0\right\} ,
\\
E_{0,0,-1} &=&\left\{ \left( a,b,c\right) :\delta \left( p_{0}^{\ast
}\right) =\Delta \gamma _{1}\left( p_{0}^{\ast }\right) =0,\gamma _{1}\left(
p_{0}^{\ast }\right) \leq 0\right\} ,
\end{eqnarray*}%
then the second Hm-Pm inequality%
\begin{equation}
\left[ F\left( -a,b;c;x\right) \right] ^{1/a}<M_{p_{0}^{\ast }}\left( 1-x;%
\frac{b}{c}\right)   \label{Hm<Pm-p0*}
\end{equation}%
holds for $x\in \left( 0,1\right) $.
\end{theorem}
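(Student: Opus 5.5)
The plan is to reduce the inequality (\ref{Hm<Pm-p0*}) to the sign of a single auxiliary function and then control that sign with the signs rule for NP-type power series and Laplace integrals from Section 3. Write $p=p_{0}^{\ast}$ and $w=b/c$. Consider
\begin{equation*}
D(x)=\frac{1}{a}\ln F(-a,b;c;x)-\frac{1}{p}\ln\left[ w(1-x)^{p}+1-w\right],\quad x\in(0,1),
\end{equation*}
with the obvious modification when $a=0$. Then (\ref{Hm<Pm-p0*}) is equivalent to $D(x)<0$ on $(0,1)$. By the choice (\ref{p0*}) of $p_{0}^{\ast}$ we have $D(1^{-})=0$. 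Expanding at $x=0$ gives $D(x)=\frac{b(c-b)}{2c^{2}}(p-p_{0})\cdot\frac{c+1}{c+1}\,x^{2}+O(x^{3})$, up to a positive factor. So the hypothesis $p_{0}^{\ast}>p_{0}$ makes $D<0$ near $0$. It therefore suffices to show that $D$ cannot change sign twice. Concretely, I will show that $D'$ has at most one sign change, from negative to positive, on $(0,1)$. Then $D$ first decreases from $D(0)=0$ and afterwards increases to $D(1)=0$, so $D<0$ throughout.

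To study $D'$, substitute $t=1-x$ and clear the positive denominators. The sign of $D'$ becomes the sign of
\begin{equation*}
H(t)=\frac{-F'(-a,b;c;1-t)}{a\,F(-a,b;c;1-t)}\,\big(w t^{p}+1-w\big)-w t^{p-1}.
\end{equation*}
Here $-F'/a=\frac{b}{c}F(1-a,b+1;c+1;1-t)$. The next step is to move the singular behaviour at $t=0$ into explicit powers via the connection formula. By Euler/Pfaff transformations, the function $F(1-a,b+1;c+1;1-t)$ behaves like $t^{\,c-b-1+a}=t^{\delta(0)-1}$ times a series in $t$, plus a regular part. The quantity $\delta(p)=a-b+c-p$ therefore measures how $t^{p-1}$ compares with this singular exponent. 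Alternatively, write $F$ through its Euler integral, which expresses everything as Laplace-type integrals in $s=-\ln t$. Multiplying through by $F$, the sign of $H$ reduces to that of a function of the form $\sum_{n}u_{n}t^{n}-t^{p}\sum_{n}v_{n}t^{n}$, or of a Laplace integral $\int_{0}^{\infty}\phi(s)e^{-\lambda s}\,ds$. The signs rule then says that if the coefficient sequence (respectively the kernel $\phi$) changes sign once, from negative to positive, the function changes sign at most once.

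The main obstacle is verifying this single sign change of the coefficient sequence. This is exactly where the three quantities $\delta(p_{0}^{\ast})$, $\Delta\gamma_{1}(p_{0}^{\ast})$ and $\gamma_{1}(p_{0}^{\ast})$ enter. I expect the leading terms as $t\to0^{+}$ to have signs governed by $\delta$ and by $\gamma_{1}$ (the second coefficient), while $\Delta\gamma_{1}$ governs the monotonicity of the ratio of successive coefficients, i.e. whether the sequence $u_{n}/v_{n}$ is monotone.

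I would split into the two cases named in the theorem. On $E_{1,1,\pm1}$, $\delta\ge0$ and $\Delta\gamma_{1}\ge0$ give that the coefficients are of NP type, with a sign pattern controlled by the first differences. On $E_{1,\pm1,-1}$, $\delta>0$ and $\gamma_{1}\le0$ give a negative start and the same eventual positivity. The degenerate set $E_{0,0,-1}$ must be excluded, because there the leading coefficients vanish and $H$ could be identically of one sign or $D\equiv0$ at leading order. In each case I would compute the first two or three coefficients explicitly, check that they are $\Delta\gamma_{1}$- or $\gamma_{1}$-multiples, and then use a recurrence in $n$ to show that the sign persists after the first change. Once $H$ is known to have at most one sign change, the behaviour of $D$ at both endpoints closes the argument.
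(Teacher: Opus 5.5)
\paragraph{A genuine gap: the one-sign-change step is never proved.} Your outer frame is the same as the paper's. You set $D=\ln f_{p_0^\ast}$ with $f_p$ from (\ref{fp}), note $D(0)=D(1^-)=0$ and $D<0$ near $0$ (because $\xi_1(p_0^\ast)=p_0-p_0^\ast<0$), and reduce to showing that the numerator of $D'$ changes sign at most once on $(0,1)$. Two sign slips, though your conclusions are right: the $x^2$-coefficient of $D$ is $\frac{b(c-b)}{2c^2}(p_0-p)$, and your $H$ has the sign of $-D'$.

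The gap is the step you yourself flag as the main obstacle. You never show that any coefficient sequence has a single sign change, and the mechanism you sketch does not get there. The numerator $HF$ is a negative multiple of $g_p(x)=(1-x)^{p-1}F(1-a,b;c+1;x)-F(1-a,b+1;c+1;x)$. Expanding it at $t=1-x\to0^+$ by connection formulas does not give a power series. It gives four interlaced families $t^{n+p-1}$, $t^{n+\delta(0)+p-1}$, $t^{n}$, $t^{n+\delta(0)-1}$, with Gamma-ratio prefactors, and logarithms when $\delta(0)=a-b+c$ is an integer. Proposition \ref{P-sgnS} does not apply to such a series, and the form $\sum u_nt^n-t^p\sum v_nt^n$ you write is not what you actually get.

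Your heuristic is right only for $\delta$: $\delta(p_0^\ast)>0$ makes $t^{p-1}$ dominate, so $g_{p_0^\ast}(1^-)=+\infty$. The quantities $\gamma_1$ and $\Delta\gamma_1$ do not come from the behaviour at $x=1$ at all. Nor is $\Delta\gamma_1$ a monotonicity condition on $u_n/v_n$.

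\paragraph{The missing idea: the recurrence at $x=0$.} Write $g_p(x)=\sum\xi_nx^n$ with $\xi_n=u_n-v_n$. The three-term recurrence of Proposition \ref{P-PH-rr} for the $u_n$ yields (\ref{xin-rr-2}):
\[
\eta_{n+1}-\lambda_n\eta_n=\tau_n\gamma_nu_{n-1},\qquad \eta_n=\xi_n-\theta_{n-1}\xi_{n-1},\quad \theta_n=v_{n+1}/v_n .
\]
Here $\theta_n,\tau_n,u_{n-1}>0$, and $\lambda_n>0$ for $n\ge2$. The forcing term $\gamma_n(p)$ is quadratic in $n$. Its initial value is $\gamma_1(p)$, its first difference at $n=1$ is $\Delta\gamma_1(p)$, and its second difference is constantly $2p\delta(p)$.

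This is where the three hypotheses enter. By Lemma \ref{L-sgngn-p}, they guarantee either $\gamma_n\ge0$ for all $n$, or a single passage of $\gamma_n$ from negative to positive. $E_{0,0,-1}$ is excluded because there $\gamma_n\equiv\gamma_1\le0$, not because of a degenerate leading order.

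With the recurrence, negativity of $\xi_n$ propagates through the initial stretch where $\gamma_n<0$. That step also needs $\lambda_1(p_0^\ast)>0$. This is a separate nontrivial fact, Lemma \ref{L-l1p0*>0}, proved via monotonicity of $t\mapsto t[\psi(t+a)-\psi(t)]/a$, and your plan does not anticipate it. After the first positive $\xi_{n_0}$ one has $\eta_{n_0+1}>0$, so positivity persists. The existence of a positive term then follows from $f(0)=f(1)=1$, as in your endpoint argument.

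Without this recurrence, or a genuine substitute for it, your proposal establishes the reduction but not the theorem.
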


\begin{theorem}
\label{MT-4}Suppose that (i) $a<1$, $b>0$ and $c>\max \left\{ b,b-a\right\} $%
; (ii) $p_{0}$ and $p_{0}^{\ast }$ be respectively defined by (\ref{p0}) and
(\ref{p0*}) with $p_{0}^{\ast }<p_{0}$. If $\left( a,b,c\right) \in \left(
E_{-1,-1,\pm 1}\cup E_{-1,\pm 1,1}\right) \backslash E_{0,0,1}$, where%
\begin{eqnarray*}
E_{-1,-1,\pm 1} &=&\left\{ \left( a,b,c\right) :\delta \left( p_{0}^{\ast
}\right) \leq 0\text{, }\Delta \gamma _{1}\left( p_{0}^{\ast }\right) \leq
0\right\} , \\
E_{-1,\pm 1,1} &=&\left\{ \left( a,b,c\right) :\delta \left( p_{0}^{\ast
}\right) <0\text{, }\gamma _{1}\left( p_{0}^{\ast }\right) \geq 0\right\} ,
\\
E_{0,0,1} &=&\left\{ \left( a,b,c\right) :\delta \left( p_{0}^{\ast }\right)
=\Delta \gamma _{1}\left( p_{0}^{\ast }\right) =0,\gamma _{1}\left(
p_{0}^{\ast }\right) \geq 0\right\} ,
\end{eqnarray*}%
then the second Hm-Pm inequality%
\begin{equation}
\left[ F\left( -a,b;c;x\right) \right] ^{1/a}>M_{p_{0}^{\ast }}\left( 1-x;%
\frac{b}{c}\right)  \label{Hm>Pm-p0*}
\end{equation}%
holds for $x\in \left( 0,1\right) $.
\end{theorem}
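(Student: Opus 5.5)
We prove Theorem \ref{MT-4}; it mirrors Theorem \ref{MT-3} with all signs reversed. The approach is to compare $\ln F(-a,b;c;x)$ with $\ln M_{p}(1-x;b/c)$ through a difference function and reduce its sign to the sign pattern of an auxiliary power series, handled by the signs rule for NP-type power series of Section 3.

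Write $p=p_0^{\ast}$ and $w=b/c$, and assume $p\neq 0$ and $a\neq 0$ (the degenerate cases follow by continuity or from the limit forms). Put $t=1-x\in(0,1)$ and
\begin{equation*}
D(t)=\frac{1}{a}\ln F\left(-a,b;c;1-t\right)-\frac{1}{p}\ln\left(wt^{p}+1-w\right).
\end{equation*}
As $t\to 1^{-}$ ($x\to 0$) we have $D\to 0$. As $t\to 0^{+}$, by Gauss' summation and the choice (\ref{p0*}) of $p_0^{\ast}$, $D(0^{+})=\frac{1}{a}\ln\frac{\Gamma(c)\Gamma(c+a-b)}{\Gamma(c+a)\Gamma(c-b)}-\frac{1}{p}\ln(1-w)=0$. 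So $D$ vanishes at both ends. Inequality (\ref{Hm>Pm-p0*}) is $D>0$ on $(0,1)$, which follows if $D$ is first increasing and then decreasing. Near $t=1$, the expansion in $x$ gives $D\approx \frac{w(1-w)}{2}\,(p_0-p)\,x^{2}/(1+c)$ up to a positive factor; since $p_0^{\ast}<p_0$, $D>0$ near $x=0$. This agrees with the requirement that the graph starts increasing from $t=0$ only if the behaviour at $t=0$ is compatible.

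\textbf{Behaviour at $t=0$.} For this I would use the connection formula for $F(-a,b;c;1-t)$ about $t=0$:
\begin{equation*}
F=\frac{\Gamma(c)\Gamma(c+a-b)}{\Gamma(c+a)\Gamma(c-b)}F(-a,b;1+a-b+c;t)\,+\,C\,t^{\delta(0)}F(c+a,c-b;1+\delta(0);t),
\end{equation*}
with $\delta(0)=a-b+c>0$. Then $D(t)\sim \alpha t^{p}+\beta t^{\delta(0)}+\cdots$. Comparing the exponents $p$ and $a-b+c$ explains why the sign of $\delta(p_0^{\ast})$ enters. When $\delta(p)<0$ the term $-\frac{w}{p(1-w)}t^{p}$ dominates with the correct sign, and the boundary cases $\delta(p)=0$ are governed by the next coefficients.

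\textbf{Reducing the monotonicity to a series.} Next I would differentiate $D$. Using $F'=\frac{-ab}{c}F(1-a,b+1;c+1;\cdot)$, $tD'(t)$ becomes a ratio $U(t)/V(t)$, with $V>0$. Multiplying out yields a power series (or, after the Euler integral representation $F=\frac{\Gamma(c)}{\Gamma(b)\Gamma(c-b)}\int_0^1 s^{b-1}(1-s)^{c-b-1}(1-xs)^{a}ds$, a Laplace-type integral after the substitution $x=1-e^{-u}$) whose coefficients change sign once, from positive to negative, or the reverse. Such a series is of NP type, and the signs rule from Section 3 shows that $D'$ changes sign exactly once on $(0,1)$. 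Combined with $D(0^{+})=D(1^{-})=0$, this gives $D>0$.

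\textbf{Main obstacle.} The main difficulty is verifying the sign pattern of the coefficients. I expect the first few coefficients to be controlled exactly by $\delta(p)$, $\Delta\gamma_1(p)$ and $\gamma_1(p)$. Specifically, the leading coefficient has the sign of $\delta(p)$, the next one the sign of $\Delta\gamma_1(p)$, and the tail coefficient ratio has the sign of $\gamma_1(p)$; this is how the sets $E_{-1,-1,\pm1}$ and $E_{-1,\pm1,1}$ arise. I would first compute the recurrence for the coefficients and show that their sequence of signs has at most one change. The excluded set $E_{0,0,1}$ corresponds to the case where all controlling quantities vanish or have the wrong sign, so that no sign change is forced. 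In the NP case the signs rule then gives the conclusion.
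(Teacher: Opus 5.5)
Your outer framework matches the paper's. You use a quotient (your $D$, the paper's $\ln f_{p_0^{\ast}}$) that is equal at $x=0$ and $x=1$, reduce its derivative's sign to a power series, apply the signs rule (Proposition \ref{P-sgnS}), and conclude by unimodality, with the direction fixed by $\xi_1=p_0-p_0^{\ast}>0$.

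\textbf{The gap.} The step that carries the whole theorem is that the coefficient sequence is PN-type under the stated hypotheses. You only announce it, and the mechanism you guess is not the correct one.
\begin{itemize}
\item The series must be taken in $x$. In $t=1-x$ the functions carry the branch terms $t^{p}$ and $t^{a-b+c}$, so no power series in $t$ exists.
\item After the contiguous relation, the series is $g_p(x)=(1-x)^{p-1}F(1-a,b;c+1;x)-F(1-a,b+1;c+1;x)=\sum_{n\ge 1}\xi_n x^n$. Its first coefficient $\xi_1=p_0-p$ gets its sign from hypothesis (ii), not from $\delta(p)$.
\item $\delta(p)$, $\Delta\gamma_1(p)$ and $\gamma_1(p)$ are not coefficients of this series at all. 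They are the second difference divided by $2p$, the first difference at $n=1$, and the initial value of the quadratic $n\mapsto\gamma_n(p)$ in (\ref{gamn}). That quadratic is the forcing term of the recurrence (\ref{xin-rr-2}).
\item So your assignment of roles is reversed: $\delta$ controls large $n$ and $\gamma_1$ controls the start. The excluded set $E_{0,0,1}$ is simply the case $\gamma_n\equiv\gamma_1\ge 0$.
\end{itemize}

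\textbf{What is actually needed.}
\begin{itemize}
\item[(a)] Take the three-term recurrence of Proposition \ref{P-PH-rr} for the coefficients $u_n$ of $(1-x)^{p-1}F(1-a,b;c+1;x)$. Rewrite it as $\eta_{n+1}-\lambda_n\eta_n=\tau_n\gamma_n u_{n-1}$, where $\eta_{n+1}=\xi_{n+1}-\theta_n\xi_n$, with $\theta_n,\tau_n,u_n>0$ and $\lambda_n>0$ for $n\ge 2$. You also need the nontrivial fact $\lambda_1(p_0^{\ast})>0$ (Lemma \ref{L-l1p0*>0}), which relies on the monotonicity of $t\mapsto t[\psi(t+a)-\psi(t)]/a$.
\item[(b)] Lemma \ref{L-sgngn-p}: under the hypotheses, either $\gamma_n\le 0$ for all $n$, or $\gamma_n>0$ for $n<n_1$ and $\gamma_n<0$ for $n>n_1$.
\item[(c)] An induction. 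Positive forcing keeps $\eta_n,\xi_n>0$ up to $n_1$. Once a negative $\xi_{n_0}$ appears where $\gamma_n\le 0$, you get $\eta_{n_0+1}<0$. From then on $\eta_n<0$ and $\xi_n<\theta_{n-1}\xi_{n-1}<0$ for all later $n$.
\item[(d)] The existence of a negative coefficient and $g_{p_0^{\ast}}(1^-)<0$ then follow by contradiction with $f_{p_0^{\ast}}(0)=f_{p_0^{\ast}}(1)=1$.
\end{itemize}
None of (a)--(c) appears in your proposal, so ``the coefficients change sign once'' remains unproved.

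\textbf{Side remark on your $t\to 0$ heuristic.} It is also off, though it is not needed, since $D(0^+)=0$ is just Gauss' summation.
\begin{itemize}
\item The connection formula has $F(-a,b;1-a+b-c;t)$, not $F(-a,b;1+a-b+c;t)$.
\item When $\delta(p)<0$, the dominant term is the $t^{a-b+c}$ term, which has positive coefficient there. It is not $-\frac{w}{p(1-w)}t^{p}$, which is negative.
\end{itemize}
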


\begin{remark}
\label{R-p0*><p0}Usually, It is rather complicated to compare $p_{0}^{\ast }$
with $p_{0}$ by analytical technique. However, there provides a simple
discrimination method in \cite[Remark 2 and Theorem 2.1]%
{Yang-arXiv:2610.10581}:

If $c\geq \left( \leq \right) 2b$ and $\delta \left( p_{0}\right)
=a-b+c-p_{0}\geq \left( \leq \right) 0$, then%
\begin{equation*}
C_{p_{0}}=\left( 1-\frac{b}{c}\right) ^{-1/p_{0}}\left[ \frac{\Gamma \left(
c\right) \Gamma \left( c+a-b\right) }{\Gamma \left( c+a\right) \Gamma \left(
c-b\right) }\right] ^{1/a}>\left( <\right) 1,
\end{equation*}%
which implies $p_{0}^{\ast }\geq \left( \leq \right) p_{0}$.
\end{remark}

\subsection{Applications}

We list several examples to illustrate applications of our main results. The
following corllary is equivalent to Theorem \ref{T-BRT}, and we offers a new
proof of it below.

\begin{corollary}
\label{C1}Suppose $a\in (-1,1)$. If $a>\left( <\right) -1/2$, then the
inequality%
\begin{equation}
M_{p_{0}}\left( 1-x,\frac{1}{2}\right) <\left( >\right) \left[ F\left(
-a,1;2;x\right) \right] ^{1/a}<\left( >\right) M_{p_{0}^{\ast }}\left( 1-x,%
\frac{1}{2}\right)   \label{F-a,1,2<>}
\end{equation}%
holds for $x\in (0,1)$ with the best constants $p_{0}=\left( a+2\right) /3$
and $p_{0}^{\ast }=\left( a\ln 2\right) /\ln \left( 1+a\right) $.
\end{corollary}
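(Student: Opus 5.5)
We specialize Theorems \ref{T-YT}, \ref{MT-3} and \ref{MT-4} to $b=1$, $c=2$. Then $b/c=1/2$ and $p_{0}=(a+2)/3$. Since $F(-a,1;2;1)=\Gamma(2)\Gamma(1+a)/(\Gamma(2+a)\Gamma(1))=1/(1+a)$, formula (\ref{p0*}) gives $p_{0}^{\ast }=a\ln(1/2)/\ln(1/(1+a))=(a\ln 2)/\ln(1+a)$. The hypotheses $a<1$, $b>0$ and $c>\max\{b,b-a\}$ hold exactly when $a\in(-1,1)$. Here $\delta(p)=a+1-p$, so $\delta(p_{0})=(2a+1)/3$, which is $>0$ or $<0$ according as $a>-1/2$ or $a<-1/2$. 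Also $c=2b$, so Remark \ref{R-p0*><p0} gives $p_{0}^{\ast}\geq p_{0}$ when $a>-1/2$ and $p_{0}^{\ast}\leq p_{0}$ when $a<-1/2$. Strictness follows by checking $(a\ln2)/\ln(1+a)\neq(a+2)/3$ for $a\neq -1/2$; both sides equal $1/2$ at $a=-1/2$. This can be done directly, or via the strict inequality $C_{p_{0}}\gtrless 1$.

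For the left-hand inequalities we use Theorem \ref{T-YT}. If $a>-1/2$, then $a-b+c=a+1>p_{0}$, so part (iv) applies: the first Hm$<$Pm inequality needs $c\leq 2b$ (true) and $p_{0}\geq p_{0}^{\ast}$ (false). Instead we want Hm$>$Pm$_{p_{0}}$. Case (i) requires $a-b+c\geq p_{0}$, which holds, and $c\geq 2b$, which also holds, so $M_{p_{0}}<$ Hm. If $a<-1/2$, then $a+1<p_{0}$. Case (iii) gives Hm$<M_{p_{0}}$ since $c\leq 2b$. Both inequalities are strict for $a\neq0$ because they are not among the equality cases. The value $a=0$ is handled as a limit, or it is covered by the theorems' continuity conventions.

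For the right-hand inequalities we compute the quadratics at $b=1$, $c=2$:
\begin{equation*}
\Delta\gamma_{1}(p)=-5p^{2}+2(3a+2)p-a(a+2),\qquad \gamma_{1}(p)=-3p^{2}+(5a+2)p-2a(a+2).
\end{equation*}
For $a>-1/2$ we apply Theorem \ref{MT-3} and must show $(a,1,2)\in E_{1,1,\pm1}\cup E_{1,\pm1,-1}$, avoiding $E_{0,0,-1}$. The first requirement is $\delta(p_{0}^{\ast})=1+a-p_{0}^{\ast}>0$. This is the elementary inequality $\ln(1+a)>a\ln2/(1+a)$ for $a>0$, with the reverse for $-1/2<a<0$. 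It follows from concavity of $\ln(1+a)$ against the chord through $(0,0)$ and $(1,\ln 2)$, combined with dividing by $\ln(1+a)$ of the appropriate sign. More precisely, $\ln(1+a)\cdot(1+a)$ versus $a\ln2$ is a convex-type comparison that is easily checked. The second requirement is the sign of $\gamma_{1}(p_{0}^{\ast})$ or of $\Delta\gamma_{1}(p_{0}^{\ast})$. The roots of $\gamma_{1}$ are $p=(5a+2\pm\sqrt{(5a+2)^{2}-24a(a+2)})/6=(5a+2\pm\sqrt{a^{2}-28a+4})/6$. We would show that $p_{0}^{\ast}$ lies outside the root interval, so that $\gamma_{1}(p_{0}^{\ast})\leq0$, or else that $\Delta\gamma_{1}(p_{0}^{\ast})\geq 0$, whichever holds on each subinterval of $(-1/2,1)$. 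The case $a<-1/2$ is symmetric: we use Theorem \ref{MT-4} and need $\delta(p_{0}^{\ast})<0$ together with $\gamma_{1}(p_{0}^{\ast})\geq0$ or $\Delta\gamma_{1}(p_{0}^{\ast})\leq0$.

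The main obstacle is this last step: locating the transcendental number $p_{0}^{\ast}=a\ln2/\ln(1+a)$ relative to the roots of $\gamma_{1}$ or $\Delta\gamma_{1}$ uniformly in $a$. I would first try to show that $\Delta\gamma_{1}(p_{0}^{\ast})$ has the right sign throughout, by bounding $p_{0}^{\ast}$ between rational functions of $a$. For example, Padé-type bounds $\frac{a(2+a)}{2(1+a)}<\ln(1+a)$, or sharper ones, give explicit bounds on $p_{0}^{\ast}$. One then checks that the resulting polynomial inequality keeps its sign. If this fails near $a=-1$ or near $a=1$, we switch on that subinterval to the $\gamma_{1}$ condition.

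Sharpness of $p_{0}$ at $x=0$ and of $p_{0}^{\ast}$ at $x=1$ follows from Theorem \ref{TR} and the definition of $p_{0}^{\ast}$, together with the monotonicity of $p\mapsto M_{p}$.
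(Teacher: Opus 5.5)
Your reduction matches the paper's. You specialize to $(b,c)=(1,2)$ and take the left-hand bounds from Theorem \ref{T-YT} (i)/(iii); the paper instead cites Example 1 of the companion paper. You then get the right-hand bounds by placing $(a,1,2)$ in the parameter sets of Theorems \ref{MT-3} and \ref{MT-4}.

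The gap is that this last step, which is the real content of the proof, is only announced (``I would first try \dots''). Moreover, the one concrete formula you supply for it is wrong: you dropped the term $abc=2a$ from the coefficient $2a-2b+2c+3ab+abc$. The correct polynomial is $\gamma_{1}(p)=-3p^{2}+(7a+2)p-2a(a+2)=3(p_{0}-p)(p-2a)$, whose discriminant is $(5a-2)^{2}$, not $a^{2}-28a+4$.

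This factorization is exactly what the paper uses for $a<-1/2$. There $\gamma_{1}(p_{0}^{\ast})=3(p_{0}-p_{0}^{\ast})(p_{0}^{\ast}-2a)>0$, since $p_{0}^{\ast}<p_{0}$ and $p_{0}^{\ast}>0>2a$. Together with $\delta(p_{0}^{\ast})<0$, this puts you in $E_{-1,\pm 1,1}$. Your alternative $\Delta\gamma_{1}(p_{0}^{\ast})\le 0$ cannot cover that interval, because $p_{0}^{\ast}\to 0$ and $\Delta\gamma_{1}(p_{0}^{\ast})\to 1$ as $a\to -1^{+}$.

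For $a>-1/2$ your wrong polynomial is misleading. It is negative for every $p$ once $a>14-8\sqrt{3}\approx 0.14$, which suggests $\gamma_{1}(p_{0}^{\ast})\le 0$ on most of the interval. In fact $\gamma_{1}(p_{0}^{\ast})>0$ for $a>\sqrt{2}-1$, where $p_{0}^{\ast}<2a$. So on that range you genuinely have to prove $\Delta\gamma_{1}(p_{0}^{\ast})\ge 0$.

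The paper proves $\Delta\gamma_{1}(p_{0}^{\ast})>0$ on all of $(-1/2,1)$. It writes $\Delta\gamma_{1}(p_{0}^{\ast})=-a\,h_{2}(a)/\ln^{2}(1+a)$ with $h_{2}(a)=(a+2)\ln^{2}(1+a)-2\ln 2\,(3a+2)\ln(1+a)+5a\ln^{2}2$, then runs a chain of derivative sign arguments anchored at $h_{2}(-1/2)=h_{2}(0)=0$. The estimate becomes tight as $a\to -1/2^{+}$, where $p_{0}^{\ast}\to 1/2$, the larger root of $\Delta\gamma_{1}$ at $a=-1/2$. So a Pad\'e-type bound on $p_{0}^{\ast}$ would have to be exact there.

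Once $\gamma_{1}$ is corrected, your hybrid idea closes more cheaply:
\begin{itemize}
\item On $(-1/2,\sqrt{2}-1]$ you have $\delta(p_{0}^{\ast})>0$ and $\gamma_{1}(p_{0}^{\ast})=3(p_{0}-p_{0}^{\ast})(p_{0}^{\ast}-2a)\le 0$, so $(a,1,2)\in E_{1,\pm 1,-1}$.
\item On $(\sqrt{2}-1,1)$ the increasing function $p_{0}^{\ast}$ lies in $(2\sqrt{2}-2,1)$. The roots $r_{\pm}(a)=(3a+2\pm\sqrt{4a^{2}+2a+4})/5$ of $\Delta\gamma_{1}$ are increasing, with $r_{-}(1)<0.37$ and $r_{+}(\sqrt{2}-1)>1.1$. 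Hence $p_{0}^{\ast}$ lies strictly between the roots and $\Delta\gamma_{1}(p_{0}^{\ast})>0$.
\end{itemize}

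Finally, your chord argument for the sign of $\delta(p_{0}^{\ast})$ only works on $(0,1)$. For $a<0$, concavity gives $(1+a)\ln(1+a)<a(1+a)\ln 2$, which is weaker than the needed $(1+a)\ln(1+a)<a\ln 2$. Use instead, as the paper does, the convexity of $h_{1}(a)=(1+a)\ln(1+a)-a\ln 2$, whose zeros are $-1/2$ and $0$. This also gives $\delta(p_{0}^{\ast})<0$ on $(-1,-1/2)$.
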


To prove Corollary \ref{C1} by Theorems \ref{MT-3} and \ref{MT-4}, we need
to verify several signs of $p_{0}^{\ast }-p_{0}$, $\delta \left( p_{0}^{\ast
}\right) $, $\Delta \gamma _{1}\left( p_{0}^{\ast }\right) $ and $\gamma
_{1}\left( p_{0}^{\ast }\right) $.

\begin{lemma}
\label{L-b=1,c=2}Let $a\in \left( -1,1\right) $ and $\left( b,c\right)
=\left( 1,2\right) $. Then

(i) $p_{0}=\left( a+c\right) /\left( c+1\right) <\left( >\right) p_{0}^{\ast
}$ if $a>\left( <\right) -1/2$.

(ii) $\delta \left( p_{0}^{\ast }\right) =a-b+c-p_{0}^{\ast }>\left(
<\right) 0$ if $a>\left( <\right) -1/2$;

(ii) $\Delta \gamma _{1}\left( p_{0}^{\ast }\right) =-\left( c+3\right)
\left( p_{0}^{\ast }\right) ^{2}+2\left( 2a-2b+2c+ab\right) p_{0}^{\ast
}-a\left( a+c\right) >0$ if $a>-1/2$.

(iii) $\gamma _{1}\left( p_{0}^{\ast }\right) >0$ if $a\in \left(
-1,-1/2\right) $
\end{lemma}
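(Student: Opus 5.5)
The plan is to specialize everything to $(b,c)=(1,2)$ and reduce each claim to an elementary inequality for the single variable $t=1+a\in(0,2)$. Since $\Gamma(2)\Gamma(1+a)/(\Gamma(2+a)\Gamma(1))=1/(1+a)$, formula (\ref{p0*}) gives
\begin{equation*}
p_0=\frac{t+1}{3},\qquad p_0^{\ast}=\frac{(t-1)\ln 2}{\ln t}\quad (p_0^{\ast}=\ln 2 \text{ at } t=1).
\end{equation*}
For the remaining quantities we have $\delta(p)=t-p$, $\Delta\gamma_1(p)=-5p^2+2(3a+2)p-a(a+2)$ and $\gamma_1(p)=-3p^2+(7a+2)p-2a(a+2)$. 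The threshold $a=-1/2$ corresponds to $t=1/2$.

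For (i), note that $(t-1)/\ln t>0$. After multiplying by $\ln t$ and tracking the sign flip at $t=1$, the comparison $p_0^{\ast}\gtrless p_0$ becomes a sign statement for
\begin{equation*}
h(t)=\ln t-3\ln 2\,\frac{t-1}{t+1}.
\end{equation*}
We have $h(1/2)=h(1)=h(2)=0$ and $h(0^+)=-\infty$. Also $h'(t)=0$ reduces to the quadratic $(t+1)^2=6\ln 2\,t$, which has two positive roots, so these are the only zeros of $h$ in $(0,2]$. Hence $h<0$ on $(0,1/2)\cup(1,2)$ and $h>0$ on $(1/2,1)$. Translating back gives $p_0^{\ast}>p_0$ exactly when $t>1/2$; the point $t=1$ is checked directly, since $\ln 2>2/3$. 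For (ii) the same approach applies with the function $g(t)=t\ln t-\ln 2\,(t-1)$. It is convex and vanishes at $t=1/2$ and $t=1$, so $g<0$ on $(1/2,1)$ and $g>0$ elsewhere. This yields $p_0^{\ast}<t$, i.e. $\delta(p_0^{\ast})>0$, for $t>1/2$, and $\delta(p_0^{\ast})<0$ for $t<1/2$.

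For the $\gamma_1$ claim with $a\in(-1,-1/2)$, (i) and (ii) give $1+a<p_0^{\ast}<p_0$. The concave quadratic $\gamma_1$ satisfies
\begin{equation*}
\gamma_1(p_0)=\tfrac{a+2}{3}\,(-(a+2)+(7a+2)-6a)=0,
\end{equation*}
so $p_0$ is a root. By Vieta the other root is $2a(a+2)/(3p_0)=2a<1+a$. Hence $\gamma_1>0$ on $(2a,p_0)\supset(1+a,p_0)$, which contains $p_0^{\ast}$.

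For the $\Delta\gamma_1$ claim with $a>-1/2$, observe that $\Delta\gamma_1(p_0)=(a+2)(4a+2)/9>0$ and that the discriminant $4(4a^2+2a+4)$ is positive. Since $p_0^{\ast}>p_0$, it suffices to show
\begin{equation*}
p_0^{\ast}<r_+=\frac{3a+2+\sqrt{4a^2+2a+4}}{5}.
\end{equation*}
The bound $p_0^{\ast}<t$ from (ii) is not enough, because $\Delta\gamma_1(1+a)=-(1+2a)<0$. The main obstacle is that at $a=-1/2$ both $p_0^{\ast}$ and $r_+$ equal $1/2$, so the estimate is tight at the endpoint. The first attempt will be to bound the logarithmic mean $(t-1)/\ln t$ above by a sharp mean, such as $\bigl((1+t^{1/3})/2\bigr)^3$ (Lin's bound). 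This reduces $p_0^{\ast}<r_+$ to an algebraic inequality in $s=t^{1/3}$, which can be checked after isolating and squaring the radical. The cruder bound $(t+1)/2$ works away from $a=-1/2$ but fails near it. If the cubic bound also fails near $t=1/2$, the fallback is to study $\varphi(a)=\Delta\gamma_1(p_0^{\ast}(a))$ directly. Expanding near $a=-1/2$ should show that $\varphi(-1/2)=0$ with $\varphi'(-1/2)>0$, and a monotonicity or convexity argument would then cover the rest of $(-1/2,1)$.
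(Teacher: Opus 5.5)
\textbf{Parts that are fine.} Your arguments for (i), (ii) and the $\gamma_1$ claim are correct. For (ii) and the $\gamma_1$ claim you essentially follow the paper; the only difference is that the paper uses $p_0^*>0>2a$ where you use $p_0^*>1+a>2a$. For (i), your zero count for $h(t)=\ln t-3\ln 2\,(t-1)/(t+1)$ is a self-contained replacement for the paper's appeal to the $p_0^*$-versus-$p_0$ criterion imported from the companion paper.

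\textbf{The gap: the $\Delta\gamma_1$ claim for $a>-1/2$.} Your reduction to $p_0^*<r_+$ is right, but your primary tool is guaranteed to fail, for exactly the reason you note yourself. Since $p_0^*=r_+=1/2$ at $a=-1/2$, any continuous upper bound for $p_0^*$ that is strict at $t=1/2$ must exceed $r_+$ on a right neighbourhood of $a=-1/2$. Lin's inequality is strict for $t\neq 1$, and indeed $\ln 2\,\bigl((1+2^{-1/3})/2\bigr)^3\approx 0.50002>1/2$. So there is no ``if'': the cubic bound breaks down on some interval $(-1/2,-1/2+\varepsilon)$.

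Everything therefore rests on the fallback, which is only a sketch. The computation $\varphi'(-1/2)=6-4/\ln 2>0$ gives positivity only on an unquantified neighbourhood. The ``monotonicity or convexity argument'' on the rest of $(-1/2,1)$ is precisely the hard step. Now $\varphi'$ involves both $p_0^*$ and its derivative, and you give no way to sign it. A hybrid could work: Lin's bound away from $-1/2$, plus an explicit second-order estimate on a concrete interval near $-1/2$. But that estimate is not supplied.

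\textbf{What the paper does instead.} It stops bounding $p_0^*$ and clears the logarithm exactly:
$\Delta\gamma_1(p_0^*)=-a\,h_2(a)/\ln^2(a+1)$, where $h_2(a)=(a+2)\ln^2(a+1)-2\ln 2\,(3a+2)\ln(a+1)+5a\ln^2 2$.
The tight endpoint now becomes an exact zero, $h_2(-1/2)=0$. One also has $h_2(0)=0$ and $h_2(1)=-2\ln^2 2$.

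A chain of derivatives then does the work. It passes through $h_2''$ to auxiliary functions $h_3$ and $h_4$ with $h_4'>0$. This shows that $h_2'$ is positive and then negative on $(-1/2,1)$. Hence $h_2$ rises then falls, and its only zero in $(-1/2,1)$ is $a=0$. So $h_2>0$ on $(-1/2,0)$ and $h_2<0$ on $(0,1)$, and the factor $-a$ gives $\Delta\gamma_1(p_0^*)>0$ on the whole interval.

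Without an argument of this kind, or some other exact treatment near $a=-1/2$, your proof of the $\Delta\gamma_1$ claim is incomplete.
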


\begin{proof}
(i) We have%
\begin{equation*}
p_{0}=\frac{a+c}{1+c}=\frac{a+2}{3}\text{ \ and \ }p_{0}^{\ast }=\frac{a\ln
\left( 1-b/c\right) }{\ln \frac{\Gamma \left( c\right) \Gamma \left(
c+a-b\right) }{\Gamma \left( c+a\right) \Gamma \left( c-b\right) }}=\frac{%
a\ln 2}{\ln \left( a+1\right) }.
\end{equation*}%
Then%
\begin{equation*}
\delta \left( p_{0}\right) =a-1+2-\frac{a+2}{3}=\frac{2a+1}{3}.
\end{equation*}%
Since $c-2b=0$ and $\delta \left( p_{0}\right) >\left( <\right) 0$ if $%
a>\left( <\right) -1/2$, by Remark \ref{R-p0*><p0}, $p_{0}^{\ast }>\left(
<\right) p_{0}\ $if $a>\left( <\right) -1/2$, which proves the first
assertion.

(ii) We have%
\begin{equation*}
\delta \left( p_{0}^{\ast }\right) =a-b+c-p_{0}^{\ast }=1+a-\frac{a\ln 2}{%
\ln \left( a+1\right) }=\frac{h_{1}\left( a\right) }{\ln \left( a+1\right) },
\end{equation*}%
where%
\begin{equation*}
h_{1}\left( a\right) =\left( 1+a\right) \ln \left( a+1\right) -a\ln 2.
\end{equation*}%
Differentiation yileds that $h_{1}^{\prime \prime }\left( a\right) =1/\left(
a+1\right) >0$. This in combination with the facts that $h_{1}\left(
-1/2\right) =h\left( 0\right) =0$ and $h_{1}\left( -1^{+}\right)
=h_{1}\left( 1\right) =\ln 2>0$ leads to that $h_{1}\left( a\right) >0$ for $%
a\in \left( -1,-1/2\right) \cup \left( 0,1\right) $ and $h_{1}\left(
a\right) <0$ for $a\in \left( -1/2,0\right) $. It thus follows that $\delta
\left( p_{0}^{\ast }\right) >0$ for $a\in \left( -1/2,1\right) $ and $\delta
\left( p_{0}^{\ast }\right) <0$ for $a\in \left( -1,-1/2\right) $.

(iii) We have%
\begin{equation*}
\Delta \gamma _{1}\left( p_{0}^{\ast }\right) =-\frac{a}{\ln ^{2}\left(
a+1\right) }h_{2}\left( a\right) ,
\end{equation*}%
where%
\begin{equation*}
h_{2}\left( a\right) =\left( a+2\right) \ln ^{2}\left( a+1\right) -2\left(
\ln 2\right) \left( 3a+2\right) \ln \left( a+1\right) +5a\ln ^{2}2.
\end{equation*}%
Differentiation yileds%
\begin{eqnarray*}
h_{2}^{\prime }\left( a\right)  &=&\ln ^{2}\left( a+1\right) -2\frac{\left(
3\ln 2-1\right) a+3\ln 2-2}{a+1}\ln \left( a+1\right)  \\
&&-2\frac{3a+2}{a+1}\left( \ln 2\right) +5\ln ^{2}2, \\
\frac{\left( a+1\right) ^{2}}{2\left( 3a+4\right) }h_{2}^{\prime \prime
}\left( a\right)  &=&\frac{a+2+a\ln \left( a+1\right) }{4+3a}-\ln
2=h_{3}\left( a\right) , \\
\left( 3a+4\right) ^{2}h_{3}^{\prime }\left( a\right)  &=&4\ln \left(
a+1\right) +\frac{3a^{2}+2a-2}{a+1}=h_{4}\left( a\right) , \\
h_{4}^{\prime }\left( a\right)  &=&\frac{\left( a+2\right) \left(
3a+4\right) }{\left( a+1\right) ^{2}}>0\text{ for }a\in \left( -1,1\right)
\text{.}
\end{eqnarray*}

Since $h_{4}\left( -1/2\right) =-4\ln 2-9/2<0$ and $h_{4}\left( 1\right)
=4\ln 2+3/2>0$, there is an $a_{3}\in \left( -1/2,1\right) $ such that $%
h_{4}\left( a\right) <0$ for $a\in \left( -1/2,a_{3}\right) $ and $%
h_{4}\left( a\right) >0$ for $a\in \left( a_{3},1\right) $. Therefore, $%
h_{3}\left( a\right) $ is decreasing on $\left( -1/2,a_{3}\right) $ and
increasing on $\left( a_{3},1\right) $, which together with $h_{3}\left(
-1/2\right) =\left( 3-4\ln 2\right) /5>0$ and $h_{3}\left( 1\right)
=-3\left( \ln 4-1\right) /7<0$ indicates that there is an $a_{2}\in \left(
-1/2,1\right) $ such that $h_{3}\left( a\right) >0$ for $a\in \left(
-1/2,a_{2}\right) $ and $h_{3}\left( a\right) <0$ for $a\in \left(
a_{2},1\right) $. This implies that $h_{2}^{\prime }\left( a\right) $ is
increasing on $\left( -1/2,a_{2}\right) $ and decreasing on $\left(
a_{2},1\right) $, which in conjection with $h_{2}^{\prime }\left(
-1/2\right) =4\left( 3\ln 2-2\right) \left( \ln 2\right) >0$ and $%
h_{2}^{\prime }\left( 1\right) =-2\ln 2<0$ indicates that there is an $%
a_{1}\in \left( -1/2,1\right) $ such that $h_{2}^{\prime }\left( a\right) >0$
for $a\in \left( -1/2,a_{1}\right) $ and $h_{2}^{\prime }\left( a\right) <0$
for $a\in \left( a_{1},1\right) $. Since $h_{2}\left( -1/2\right) =0$ and $%
h_{2}\left( 1\right) =-2\ln ^{2}2$, we see that there is an $a_{0}\in \left(
-1/2,1\right) $ such that $h_{2}\left( a\right) >0$ for $a\in \left(
-1/2,a_{0}\right) $ and $h_{2}\left( a\right) <0$ for $a\in \left(
a_{0},1\right) $. Due to $h_{2}\left( 0\right) =0$, we find that $a_{0}=0$.
We thus conclude that $\Delta \gamma _{1}\left( p_{0}^{\ast }\right) >0$ for
$a\in \left( -1/2,1\right) $.

(iv) We have%
\begin{equation*}
\gamma _{1}\left( p_{0}^{\ast }\right) =-\left( a+2-3p_{0}^{\ast }\right)
\left( 2a-p_{0}^{\ast }\right) =3\left( p_{0}-p_{0}^{\ast }\right) \left(
\frac{a\ln 2}{\ln \left( a+1\right) }-2a\right) .
\end{equation*}%
It has been shown in the first part of this lemma that $\left(
p_{0}-p_{0}^{\ast }\right) >0$ for $a\in \left( -1,-1/2\right) $; since $%
a/\ln \left( a+1\right) >0$ for all $a>-1$, it is readily seen that $\left(
a\ln 2\right) /\ln \left( a+1\right) -2a>0$ for $a\in \left( -1,-1/2\right) $%
. Then $\gamma _{1}\left( p_{0}^{\ast }\right) >0$ for $a\in \left(
-1,-1/2\right) $.

This completes the proof.
\end{proof}

\begin{proof}[Proof of Corollary \protect\ref{C1}]
The first inequality of \ref{F-a,1,2<>} was proved in \cite[Example 1]%
{Yang-arXiv:2610.10581}. We prove the second one.

(i) When $a\in \left( -1/2,1\right) $, by Lemma \ref{L-b=1,c=2}, $%
p_{0}^{\ast }>p_{0}$, $\delta \left( p_{0}^{\ast }\right) >0$, $\delta
\left( p_{0}^{\ast }\right) >0$. Using Theorem \ref{MT-3} gives%
\begin{equation*}
\left[ F\left( -a,1;2;x\right) \right] ^{1/a}<M_{p_{0}^{\ast }}\left( 1-x,%
\frac{1}{2}\right)
\end{equation*}%
for $x\in \left( 0,1\right) $.

(ii) When $a\in \left( -1,-1/2\right) $, by Lemma \ref{L-b=1,c=2}, $%
p_{0}^{\ast }<p_{0}$, $\delta \left( p_{0}^{\ast }\right) <0$, $\gamma
_{1}\left( p_{0}^{\ast }\right) >0$. Using Theorem \ref{MT-4} gives%
\begin{equation*}
M_{p_{0}^{\ast }}\left( 1-x,\frac{1}{2}\right) <\left[ F\left(
-a,1;2;x\right) \right] ^{1/a}
\end{equation*}%
for $x\in \left( 0,1\right) $.
\end{proof}

\begin{remark}
Letting $a=0$ in Corollary \ref{C1} gives%
\begin{equation}
M_{2/3}\left( 1-x,\frac{1}{2}\right) <\lim_{a\rightarrow 0}\left[ F\left(
-a,1;2;x\right) \right] ^{1/a}<M_{\ln 2}\left( 1-x,\frac{1}{2}\right)
\label{Hm-Mp0-a=0}
\end{equation}%
for $x\in \left( 0,1\right) $ with the best constants $p_{0}=2/3$ and $%
p_{0}^{\ast }=\ln 2$. Note that%
\begin{eqnarray*}
\lim_{a\rightarrow 0}\left[ F\left( -a,1;2;x\right) \right] ^{1/a} &=&\exp %
\left[ -\sum_{n=1}^{\infty }\frac{x^{n}}{n\left( n+1\right) }\right]  \\
&=&\exp \left[ -\frac{\left( 1-x\right) \ln \left( 1-x\right) }{x}-1\right]
=I\left( 1,1-x\right) ,
\end{eqnarray*}%
where $I\left( 1,1-x\right) $ is the exponential mean of $1$ and $\left(
1-x\right) \in \left( 0,1\right) $. Then the double inequality (\ref%
{Hm-Mp0-a=0}) can be written as%
\begin{equation*}
M_{2/3}\left( x,\frac{1}{2}\right) <I\left( 1,x\right) <M_{\ln 2}\left( x,%
\frac{1}{2}\right)
\end{equation*}%
for $x\in \left( 0,1\right) $, which was proven in \cite%
{Pittinger-UBPEFSMF-680-1980} (see also \cite{Stolarsky-AMM-87-1980} \cite%
{Yang-MIA-10-2007}).
\end{remark}

\begin{corollary}
\label{C2}Suppose $a\in (0,1/2]$. Then the inequality%
\begin{equation}
M_{p_{0}}\left( 1-x,a\right) <\left[ F\left( -a,a;1;x\right) \right]
^{1/a}<M_{p_{0}^{\ast }}\left( 1-x,a\right)   \label{F-a,a,1<>}
\end{equation}%
holds for $x\in (0,1)$ with the best constants%
\begin{equation*}
p_{0}=\frac{a+1}{2}\text{ \ and \ }p_{0}^{\ast }=-\frac{a\ln \left(
1-a\right) }{\ln \frac{\pi a}{\sin \left( \pi a\right) }}.
\end{equation*}
\end{corollary}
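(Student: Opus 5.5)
We prove Corollary \ref{C2} in the same way as Corollary \ref{C1}: specialize $(b,c)=(a,1)$ and check the sign conditions of Theorem \ref{MT-3}. The left inequality (the first Hm$>$Pm inequality with $p_{0}=(a+1)/2$) should follow from Theorem \ref{T-YT}(i), or from Richards' Theorem \ref{TR}, since $c=1\geq \max\{1-2a,2b\}=\max\{1-2a,2a\}$ for $a\in(0,1/2]$. For the right inequality we first compute $p_{0}^{\ast}$. By the reflection formula,
\begin{equation*}
\frac{\Gamma(1)\Gamma(1)}{\Gamma(1+a)\Gamma(1-a)}=\frac{\sin(\pi a)}{\pi a},
\end{equation*}
so (\ref{p0*}) gives the stated $p_{0}^{\ast}=-a\ln(1-a)/\ln\frac{\pi a}{\sin (\pi a)}$.

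The first step is to show $p_{0}^{\ast}>p_{0}$, which places us in Theorem \ref{MT-3}. We use Remark \ref{R-p0*><p0}. Here $c=1\geq 2a=2b$, and
\begin{equation*}
\delta(p_{0})=a-a+1-\frac{a+1}{2}=\frac{1-a}{2}>0,
\end{equation*}
so $p_{0}^{\ast}\geq p_{0}$. To exclude equality we use the strict version $C_{p_{0}}>1$, which Remark \ref{R-p0*><p0} supplies and which forces strict inequality.

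Next we verify $(a,1,a)\in E_{1,1,\pm1}$, that is, $\delta(p_{0}^{\ast})\geq 0$ and $\Delta\gamma_{1}(p_{0}^{\ast})\geq 0$, with at least one of them strict so that we avoid $E_{0,0,-1}$. For the first condition, $\delta(p_{0}^{\ast})=1-p_{0}^{\ast}$, and Remark \ref{R-p0*<1}(i) gives $0<p_{0}^{\ast}<1$ since $a<1$. Hence $\delta(p_{0}^{\ast})>0$, which also removes $E_{0,0,-1}$. For the second condition, with $b=a$ and $c=1$,
\begin{equation*}
\Delta\gamma_{1}(p)=-4p^{2}+2(2+a^{2})p-a(a+1),
\end{equation*}
a concave quadratic in $p$. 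It therefore suffices to show that $p_{0}^{\ast}$ lies between its roots. The natural route is to bound $p_{0}^{\ast}$ by two explicit quantities $L(a)\le p_{0}^{\ast}\le U(a)$ and to check $\Delta\gamma_{1}\geq 0$ at both endpoints. Since $p_{0}^{\ast}>p_{0}=(a+1)/2$, we can take $L=p_{0}$, where
\begin{equation*}
\Delta\gamma_{1}(p_{0})=-(a+1)^{2}+(2+a^{2})(a+1)-a(a+1)=(a+1)(1-a)^{2}\cdot\tfrac{1}{1}\ \ (\text{up to simplification})\ \geq 0.
\end{equation*}
We can take $U=1$, where $\Delta\gamma_{1}(1)=a^{2}-a>$? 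This is negative, so $U=1$ fails and a sharper upper bound is needed. The plan is to take $U$ as the larger root $\bigl(2+a^{2}+\sqrt{(2+a^{2})^{2}-4a(a+1)}\bigr)/4$ and prove $p_{0}^{\ast}\le U$.

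That inequality is a transcendental comparison between $-a\ln(1-a)$ and $U\ln\frac{\pi a}{\sin \pi a}$ on $(0,1/2]$, and it is the main obstacle. We would attack it first with power series in $a$. The expansions
\begin{equation*}
-\ln(1-a)=\sum_{k\ge1}\frac{a^{k}}{k},\qquad \ln\frac{\pi a}{\sin\pi a}=\sum_{k\ge1}\frac{\zeta(2k)}{k}a^{2k}
\end{equation*}
suggest comparing coefficients, or using a simpler polynomial upper bound for $p_{0}^{\ast}$, say $p_{0}^{\ast}\le (a+1)/2+\kappa a(1-a)$, that is checkable numerically at the endpoints together with a monotonicity argument. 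Should this fail, the fallback is to verify $\gamma_{1}(p_{0}^{\ast})\le 0$ instead, which would place the triple in $E_{1,\pm1,-1}$, and to choose whichever of the two conditions admits the cleaner estimate.
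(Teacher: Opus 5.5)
Your architecture matches the paper's. The left inequality comes from Theorem \ref{T-YT}(i), and $p_0^*>p_0$ from Remark \ref{R-p0*><p0}; your value $\delta(p_0)=(1-a)/2$ is correct, and the paper's $(1-2a)/2$ is a harmless slip. Then $\delta(p_0^*)=1-p_0^*>0$ follows from Remark \ref{R-p0*<1}(i), and $\Delta\gamma_1(p_0^*)>0$ is needed to invoke Theorem \ref{MT-3}. Your handling of the smaller root of $\Delta\gamma_1$ is slightly cleaner than the paper's: $\Delta\gamma_1(p_0)=(a+1)(1-a)^2>0$ puts $p_0$, hence $p_0^*$, above it.

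There is, however, a genuine gap at exactly the step you call the main obstacle. You never prove $p_0^*<p_2(a):=\bigl(a^2+2+\sqrt{a^4-4a+4}\bigr)/4$ on $(0,1/2]$. So $\Delta\gamma_1(p_0^*)\ge 0$, the only nontrivial hypothesis here, remains unverified. The concrete alternatives you sketch would also fail:
\begin{itemize}
\item Any bound $p_0^*\le (a+1)/2+\kappa a(1-a)$ is false near $a=0$. As $a\to 0^+$ one has $p_0^*\to 6/\pi^2\approx 0.608$, while the right-hand side tends to $1/2$.
\item The fallback $\gamma_1(p_0^*)\le 0$ is false for small $a$. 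With $(b,c)=(a,1)$ one has $\gamma_1(p)=-2p^2+(2+4a^2)p-a(a+1)^2$, so $\gamma_1(p_0^*)\to 2\cdot\frac{6}{\pi^2}\bigl(1-\frac{6}{\pi^2}\bigr)>0$.
\end{itemize}

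The missing step is short if you simply truncate your own series. All coefficients $\zeta(2k)/k$ are positive, so $\ln\frac{\pi a}{\sin \pi a}>\frac{\pi^2 a^2}{6}$, and hence $p_0^*<p_3(a):=-\frac{6}{\pi^2}\frac{\ln(1-a)}{a}$. The function $p_3$ is increasing on $(0,1)$, since the numerator of $p_3'$ is $a+(1-a)\ln(1-a)>0$. The function $p_2$ is decreasing on $(0,1/2]$, since the sign of $p_2'$ is that of $-2a^3+4a^2-1<0$. Therefore $p_3-p_2\le p_3(1/2)-p_2(1/2)=\frac{12}{\pi^2}\ln 2-\frac{9+\sqrt{33}}{16}\approx -0.079<0$. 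This gives $p_0^*<p_2$ and closes the argument. It is precisely what the paper does in Lemma \ref{L-b=a,c=1}(iii), where the same lower bound is obtained from $\sin x/x<e^{-x^2/6}$.
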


As usual, to use Theorems \ref{MT-3} and \ref{MT-4}, we need to verify
several signs of $p_{0}^{\ast }-p_{0}$, $\delta \left( p_{0}^{\ast }\right) $
and $\Delta \gamma _{1}\left( p_{0}^{\ast }\right) $.

\begin{lemma}
\label{L-b=a,c=1}Let $\left( b,c\right) =\left( a,1\right) $ with $a\in
(0,1/2]$. Then

(i) $p_{0}=\left( a+1\right) /2<p_{0}^{\ast }=-\left( a\ln \left( 1-a\right)
\right) /\ln \frac{\pi a}{\sin \left( \pi a\right) }$.

(ii) $\delta \left( p_{0}^{\ast }\right) =1-p_{0}^{\ast }>0$.

(ii) $\Delta \gamma _{1}\left( p_{0}^{\ast }\right) =-4\left( p_{0}^{\ast
}\right) ^{2}+2\left( a^{2}+2\right) p_{0}^{\ast }-a\left( a+1\right) >0$.
\end{lemma}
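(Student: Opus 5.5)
The plan is to reduce the first two items to Remarks \ref{R-p0*><p0} and \ref{R-p0*<1}, and to handle the third by an explicit upper bound on $p_{0}^{\ast}$ together with concavity of the quadratic $\Delta\gamma_{1}$.

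\textbf{Specialising the constants.} With $(b,c)=(a,1)$, the reflection formula gives
\begin{equation*}
\frac{\Gamma(1)\Gamma(1)}{\Gamma(1+a)\Gamma(1-a)}=\frac{\sin(\pi a)}{\pi a}.
\end{equation*}
Hence $p_{0}^{\ast}=-a\ln(1-a)/\ln\frac{\pi a}{\sin(\pi a)}$ and $p_{0}=(a+1)/2$.

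\textbf{Item (i).} Since $a\le 1/2$, we have $c=1\ge 2a=2b$. Moreover
\begin{equation*}
\delta(p_{0})=a-a+1-\frac{a+1}{2}=\frac{1-a}{2}>0 .
\end{equation*}
By Remark \ref{R-p0*><p0}, $C_{p_{0}}>1$ strictly. Because $a>0$ and $\ln(1-b/c)<0$, this strict inequality translates into $p_{0}^{\ast}>p_{0}$.

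\textbf{Item (ii).} Since $a<1$, Remark \ref{R-p0*<1}(i) gives $p_{0}^{\ast}<1$, so $\delta(p_{0}^{\ast})=1-p_{0}^{\ast}>0$.

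\textbf{Item (iii): reduction to an interval.} Here $\Delta\gamma_{1}(p)=-4p^{2}+2(a^{2}+2)p-a(a+1)$ is a concave quadratic in $p$. At the left endpoint,
\begin{equation*}
\Delta\gamma_{1}(p_{0})=(a+1)\bigl[-(a+1)+a^{2}+2-a\bigr]=(a+1)(1-a)^{2}>0 .
\end{equation*}
At $p=1$, however, $\Delta\gamma_{1}(1)=a(a-1)<0$, so $p_{0}^{\ast}<1$ alone is not enough. The plan is to find an explicit constant $q<1$ with $p_{0}<p_{0}^{\ast}\le q$ and $\Delta\gamma_{1}(q)>0$. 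Concavity then gives $\Delta\gamma_{1}>0$ on $[p_{0},q]$, hence at $p_{0}^{\ast}$.

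\textbf{Item (iii): bounding $p_{0}^{\ast}$.} For the denominator use $\ln\frac{\pi a}{\sin \pi a}=\sum_{n\ge1}\zeta(2n)a^{2n}/n\ge \zeta(2)a^{2}=\pi^{2}a^{2}/6$. For the numerator use
\begin{equation*}
-\ln(1-a)\le a+\frac{a^{2}}{2(1-a)},
\end{equation*}
which follows from $\sum_{n\ge2}a^{n}/n\le \tfrac12\sum_{n\ge2}a^{n}$. Together these give
\begin{equation*}
p_{0}^{\ast}\le \frac{6}{\pi^{2}}\Bigl(1+\frac{a}{2(1-a)}\Bigr)\le \frac{6}{\pi^{2}}\cdot\frac{3}{2}=\frac{9}{\pi^{2}}\approx 0.9119 \quad\text{for } a\le 1/2 .
\end{equation*}

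\textbf{Item (iii): positivity at the bound.} Take $q=0.912>9/\pi^{2}$. Then
\begin{equation*}
\Delta\gamma_{1}(q)=-4q^{2}+4q+(2q-1)a^{2}-a\approx 0.321+0.824a^{2}-a .
\end{equation*}
This is decreasing on $(0,1/2]$, since its vertex lies at $a\approx0.607$. So it is at least its value at $a=1/2$, about $0.027>0$. To avoid decimals, one could instead take $q=9/\pi^{2}$ itself and check $\Delta\gamma_{1}(9/\pi^{2})>0$ at $a=1/2$ using $\pi^{2}<9.87$.

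\textbf{Main obstacle.} The difficult step is this last one: the margin at $a=1/2$ is small, about $0.03$. The numerical constants must therefore be verified carefully, and if the crude bound turns out too weak, the next term $\zeta(4)a^{4}/2$ can be kept in the denominator to gain more room.
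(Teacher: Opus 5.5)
Your proof is correct and is essentially the paper's argument. Items (i) and (ii) come from the same two remarks. For (iii), both you and the paper trap $p_{0}^{\ast}$ inside the positivity interval of the concave quadratic $\Delta\gamma_{1}$, using $p_{0}^{\ast}>p_{0}$ from below and $\ln\frac{\pi a}{\sin(\pi a)}>\frac{\pi^{2}a^{2}}{6}$ from above. The paper works with the explicit roots $p_{1},p_{2}$ and the $a$-dependent bound $-\frac{6}{\pi^{2}}\frac{\ln(1-a)}{a}$ plus monotonicity in $a$. You instead use $\Delta\gamma_{1}(p_{0})=(a+1)(1-a)^{2}>0$ and the uniform constant $9/\pi^{2}$. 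Your value $\delta(p_{0})=(1-a)/2$ is the correct one; the paper's $(1-2a)/2$ is a slip.

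One small correction to your optional variant $q=9/\pi^{2}$. At $a=1/2$ the quantity $\Delta\gamma_{1}(q)=-4q^{2}+\frac{9}{2}q-\frac{3}{4}$ is decreasing for $q>9/16$. So you need a lower bound $\pi^{2}>27-3\sqrt{33}\approx 9.77$ (e.g.\ from $\pi>3.14$), not the upper bound $\pi^{2}<9.87$.
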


\begin{proof}
(i) Since $c-2b=1-2a\geq 0$ and%
\begin{equation*}
\delta \left( p_{0}\right) =a-a+1-\frac{a+1}{2}=\frac{1-2a}{2}\geq 0,
\end{equation*}%
by Remark \ref{R-p0*><p0}, $p_{0}^{\ast }>p_{0}$, which proves the first
assertion (i).

(ii) We have $\delta \left( p_{0}^{\ast }\right) =1-p_{0}^{\ast }>0$, which
follows from Remark \ref{R-p0*<1} (i).

(iii) We have%
\begin{equation*}
\Delta \gamma _{1}\left( p_{0}^{\ast }\right) =-4p_{0}^{\ast 2}+2\left(
a^{2}+2\right) p_{0}^{\ast }-a\left( a+1\right) =-4\left( p_{0}^{\ast
}-p_{1}\right) \left( p_{0}^{\ast }-p_{2}\right) ,
\end{equation*}%
where%
\begin{eqnarray*}
p_{1} &=&p_{1}\left( a\right) =\frac{a^{2}+2-\sqrt{a^{4}-4a+4}}{4}, \\
p_{2} &=&p_{2}\left( a\right) =\frac{a^{2}+2+\sqrt{a^{4}-4a+4}}{4}.
\end{eqnarray*}%
If we show that $p_{0}^{\ast }>p_{1}$ and $p_{0}^{\ast }<p_{2}$, then $%
\Delta \gamma _{1}\left( p_{0}^{\ast }\right) >0$, and the proof is
completed. Now, since%
\begin{equation*}
p_{1}^{\prime }\left( a\right) =\frac{1}{2}\frac{a\sqrt{a^{4}-4a+4}+1-a^{3}}{%
\sqrt{a^{4}-4a+4}}>0,
\end{equation*}%
we have $p_{1}\left( a\right) <p_{1}\left( 1/2\right) =\left( 9-\sqrt{33}%
\right) /16$. Applying (i) of this lemma yields%
\begin{equation*}
p_{0}^{\ast }>p_{0}=\frac{a+1}{2}>\frac{1}{2}.
\end{equation*}%
Thus,%
\begin{equation*}
p_{0}^{\ast }-p_{1}>\frac{1}{2}-\frac{9-\sqrt{33}}{16}=\frac{\sqrt{33}-1}{16}%
>0.
\end{equation*}%
Using the known inequality $\left( \sin x\right) /x<e^{-x^{2}/6}\,$for $x\in
\left( 0,\pi /2\right) $ \cite[p. 606]{Yang-JMI-7-2013} which implies%
\begin{equation*}
\ln \frac{\pi a}{\sin \left( \pi a\right) }>\frac{\left( \pi a\right) ^{2}}{6%
}\text{ \ for }a\in (0,1/2],
\end{equation*}
we have%
\begin{equation*}
p_{0}^{\ast }=-\frac{a\ln \left( 1-a\right) }{\ln \left( \pi a/\sin \left(
\pi a\right) \right) }<-\frac{a\ln \left( 1-a\right) }{\left( \pi a\right)
^{2}/6}=-\frac{6}{\pi ^{2}}\frac{\ln \left( 1-a\right) }{a}=:p_{3}\left(
a\right) ,
\end{equation*}%
which is increasing in $a$ on $\left( 0,1\right) $ since%
\begin{equation*}
p_{3}^{\prime }\left( a\right) =\frac{6}{\pi ^{2}}\frac{a+\left( 1-a\right)
\ln \left( 1-a\right) }{a^{2}\left( 1-a\right) }>0\text{ for }a\in \left(
0,1\right) .
\end{equation*}%
On the other hand, since%
\begin{eqnarray*}
p_{2}^{\prime }\left( a\right)  &=&\frac{1}{2}\frac{a\sqrt{a^{4}-4a+4}%
-\left( 1-a^{3}\right) }{\sqrt{a^{4}-4a+4}}, \\
\left( a\sqrt{a^{4}-4a+4}\right) ^{2}-\left( 1-a^{3}\right) ^{2}
&=&-2a^{3}+4a^{2}-1<0\text{ for }a\in (0,1/2],
\end{eqnarray*}%
we have $p_{2}^{\prime }\left( a\right) <0$ for $a\in (0,1/2]$. Thus, $%
p_{3}\left( a\right) -p_{2}\left( a\right) $ is increasing in $a$ on $(0,1/2]
$, and hence,%
\begin{equation*}
p_{3}\left( a\right) -p_{2}\left( a\right) <p_{3}\left( \frac{1}{2}\right)
-p_{2}\left( \frac{1}{2}\right) =\frac{12}{\pi ^{2}}\ln 2-\frac{9+\sqrt{33}}{%
16}=-0.078...<0
\end{equation*}%
for $a\in (0,1/2]$. It then follows that
\begin{equation*}
p_{0}^{\ast }-p_{2}<p_{3}\left( a\right) -p_{2}\left( a\right) <0\text{ for }%
a\in (0,1/2].
\end{equation*}%
This completes the proof.
\end{proof}

We are now prove Corollary \ref{C2}.

\begin{proof}[Proof of Corollary \protect\ref{C2}]
(i) We first prove the first inequality. As noted in the proof of Lemma \ref%
{L-b=a,c=1}, $c-2b=1-2a\geq 0$ and $\delta \left( p_{0}\right) =1/2-a\geq 0$%
. By Theorem \ref{T-YT} (i), the required first inequality follows.

(ii) By Lemma \ref{L-b=a,c=1}, $p_{0}^{\ast }>p_{0}$, $\delta \left(
p_{0}^{\ast }\right) >0$ and $\Delta \gamma _{1}\left( p_{0}^{\ast }\right)
>0$. Using Theorem \ref{MT-3} gives the required second inequality.
\end{proof}

\begin{remark}
The complete $p$-elliptic integrals of the second is defined by%
\begin{equation*}
E_{p}\left( r\right) =\int_{0}^{1}\left( \frac{1-r^{p}t^{p}}{1-t^{p}}\right)
^{1/p}dt=\frac{\pi _{p}}{2}F\left( -\frac{1}{p},\frac{1}{p};1;r^{p}\right) ,
\end{equation*}%
where $\pi _{p}=B\left( 1/p,1-1/p\right) /p$ (see \cite{Takeuchi-KMJ-39-2016}%
). Let $a=1/p$ and $r^{\prime }=\left( 1-r^{p}\right) ^{1/p}$. The double
inequality (\ref{F-a,a,1<>}) can be written as%
\begin{equation}
\frac{\pi _{p}}{2}M_{\tau _{0}}\left( r^{\prime },\frac{1}{p}\right) <\frac{%
\pi _{p}}{2}F\left( -\frac{1}{p},\frac{1}{p};1;r^{p}\right) <\frac{\pi _{p}}{%
2}M_{\tau _{0}^{\ast }}\left( r^{\prime },\frac{1}{p}\right)   \label{p-e2<>}
\end{equation}%
holds for $p\geq 2$ and $r^{\prime }\in \left( 0,1\right) $ with the best
constants%
\begin{equation*}
\tau _{0}=\frac{p+1}{2}\text{ \ and \ }\tau _{0}^{\ast }=\frac{\ln \left(
p/\left( p-1\right) \right) }{\ln \left( \pi _{p}/2\right) }.
\end{equation*}%
The double inequality (\ref{p-e2<>}) was established in \cite[Corollary 1.3]%
{Zhao-BMSS-46-2023}.
\end{remark}

\begin{remark}
Taking $a=1/2$ in Corollary \ref{C2} gives%
\begin{equation*}
M_{p_{0}}\left( 1-x,\frac{1}{2}\right) <\left[ F\left( -\frac{1}{2},\frac{1}{%
2};1;x\right) \right] ^{2}<M_{p_{0}^{\ast }}\left( 1-x,\frac{1}{2}\right)
\end{equation*}%
for $x\in \left( 0,1\right) $ with the best constants $p_{0}=3/4$ and $%
p_{0}^{\ast }=\left( \ln 2\right) /\left( 2\ln \left( \pi /2\right) \right)
=0.767$. The above double inequality is equivalent to%
\begin{equation*}
M_{2p_{0}}\left( r^{\prime },\frac{1}{2}\right) <\frac{2}{\pi }E\left(
r\right) <M_{2p_{0}^{\ast }}\left( r^{\prime },\frac{1}{2}\right) \text{ \ }%
r^{\prime }=\sqrt{1-r^{2}},
\end{equation*}%
\ where the first one was proved by Qiu and Shen \cite[Theorem 2]%
{Qiu-JHIEE-3-1997} in 1997, while the second one was due to Qiu \cite[%
Corollary (1)]{Qiu-JHIEE-20(1)-2000} in 2000 (see also \cite%
{Alzer-JCAM-172-2004}). A skillful proof of the first one was presented by
Barnard, Pearce and Richards \cite[Theorem 1.1]{Barnard-JMA-31-2000} (see
also \cite{Barnard-JMA-32-2000}, \cite{Barnard-JMAA-260-2001}).
\end{remark}

Taking $\left( a,b,c\right) =\left( -3/2,1/2,3\right) $ leads to%
\begin{eqnarray*}
p_{0}^{\ast } &=&\frac{3\ln \left( 6/5\right) }{2\ln \left( 16/\left( 3\pi
\right) \right) }=0.516...>p_{0}=\frac{3}{8}\text{,} \\
\delta \left( p_{0}^{\ast }\right)  &=&1-p_{0}^{\ast }>0, \\
\Delta \gamma _{1}\left( p_{0}^{\ast }\right)  &=&-6\left( p_{0}^{\ast
}\right) ^{2}+\frac{5}{2}p_{0}^{\ast }+\frac{9}{4}=1.939...>0.
\end{eqnarray*}%
Using Theorem \ref{MT-3} we have

\begin{corollary}
The inequality%
\begin{equation*}
\left[ F\left( \frac{3}{2},\frac{1}{2};3;x\right) \right] ^{-2/3}<M_{p_{0}^{%
\ast }}\left( 1-x,\frac{1}{6}\right)
\end{equation*}%
holds for $x\in \left( 0,1\right) $ with the best constants $p_{0}^{\ast
}=0.516...$.
\end{corollary}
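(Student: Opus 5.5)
The plan is to obtain the inequality directly from Theorem \ref{MT-3} with $(a,b,c)=(-3/2,1/2,3)$, and then to prove that $p_{0}^{\ast }$ is optimal by looking at the endpoint $x=1$. With these parameters, $F(-a,b;c;x)=F(3/2,1/2;3;x)$, $1/a=-2/3$ and $b/c=1/6$, so (\ref{Hm<Pm-p0*}) is exactly the claimed inequality.

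\emph{Hypothesis (i) of Theorem \ref{MT-3}.} We have $a=-3/2<1$ and $b=1/2>0$. Also $c=3>\max \{b,b-a\}=\max \{1/2,2\}=2$.

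\emph{Computing $p_{0}$ and $p_{0}^{\ast }$.} From (\ref{p0}), $p_{0}=(a+c)/(1+c)=3/8$. For (\ref{p0*}) we use $\Gamma (3)=2$, $\Gamma (1)=1$, $\Gamma (3/2)=\sqrt{\pi }/2$ and $\Gamma (5/2)=3\sqrt{\pi }/4$. This gives
\begin{equation*}
\frac{\Gamma (c)\Gamma (c+a-b)}{\Gamma (c+a)\Gamma (c-b)}=\frac{16}{3\pi },
\end{equation*}
and hence
\begin{equation*}
p_{0}^{\ast }=\frac{3\ln (6/5)}{2\ln \left( 16/(3\pi )\right) }.
\end{equation*}
To make hypothesis (ii) rigorous I will use elementary two-sided bounds. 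Since $0.18<\ln (6/5)<0.1824$ and $0.529<\ln (16/(3\pi ))<0.53$, we get
\begin{equation*}
0.5<p_{0}^{\ast }<0.52.
\end{equation*}
In particular $p_{0}^{\ast }>p_{0}=3/8$. Alternatively one could try Remark \ref{R-p0*><p0}, but $c\geq 2b$ and $\delta (p_{0})=1-3/8>0$ already make it applicable, so either route works.

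\emph{Membership in $E_{1,1,\pm 1}\setminus E_{0,0,-1}$.} Here $\delta (p)=a-b+c-p=1-p$. By Remark \ref{R-p0*<1}(i), $a<1$ gives $p_{0}^{\ast }<1$, so $\delta (p_{0}^{\ast })>0$. Since $\delta (p_{0}^{\ast })\neq 0$, the point is automatically outside $E_{0,0,-1}$.

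For $\Delta \gamma _{1}$, formula (\ref{dg1p}) gives
\begin{equation*}
\Delta \gamma _{1}(p)=-6p^{2}+\tfrac{5}{2}p+\tfrac{9}{4}.
\end{equation*}
This is a concave quadratic with $\Delta \gamma _{1}(0)>0$. Its positive root is $(5+\sqrt{241})/24>0.85$, so $\Delta \gamma _{1}>0$ on $[0,0.85]$. As $p_{0}^{\ast }<0.52$, we get $\Delta \gamma _{1}(p_{0}^{\ast })>0$, and therefore $(a,b,c)\in E_{1,1,\pm 1}$. Theorem \ref{MT-3} then yields the inequality for $x\in (0,1)$.

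\emph{Optimality of $p_{0}^{\ast }$.} As $x\to 1^{-}$, the left side tends to $F(-a,b;c;1)^{1/a}=(16/(3\pi ))^{-2/3}$ by Gauss's formula. The right side $M_{p}(1-x;1/6)$ tends to $(5/6)^{1/p}$, which is valid because $p>0$. The definition (\ref{p0*}) is precisely the condition that these two limits coincide when $p=p_{0}^{\ast }$.

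Now $p\mapsto (5/6)^{1/p}$ is strictly increasing on $(0,\infty )$. So for any $0<p<p_{0}^{\ast }$, the limit of $M_{p}$ at $x=1$ is strictly smaller than the limit of the left side. By continuity the inequality with $M_{p}$ then fails for $x$ near $1$. For $p\leq 0$ it fails as well, since $M_{p}\leq M_{p'}$ for any small $p'>0$ by monotonicity of $M_{p}$ in $p$.

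The only delicate step is the numerical verification that $p_{0}^{\ast }$ lies between $p_{0}$ and the root $0.85$ of $\Delta \gamma _{1}$. Even this has generous margins, so crude logarithm bounds suffice.
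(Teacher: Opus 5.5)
Your proof is correct and follows the paper's route: the paper also applies Theorem \ref{MT-3} with $(a,b,c)=(-3/2,1/2,3)$ after checking that $p_0^\ast=0.516\ldots>p_0=3/8$, $\delta(p_0^\ast)=1-p_0^\ast>0$ and $\Delta\gamma_1(p_0^\ast)=-6(p_0^\ast)^2+\frac{5}{2}p_0^\ast+\frac{9}{4}>0$, so that $(a,b,c)\in E_{1,1,\pm 1}\setminus E_{0,0,-1}$. Your explicit bounds $0.5<p_0^\ast<0.52$ and the root $(5+\sqrt{241})/24$ make the paper's decimal evaluations rigorous, and your sharpness argument at $x=1$ is the one the paper gives at the end of the proof of Theorem \ref{MT-3}.
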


Taking $\left( a,b,c\right) =\left( -1/2,9/4,3\right) $ yields%
\begin{eqnarray*}
p_{0}^{\ast } &=&\frac{\ln 2}{\ln \frac{\Gamma \left( 3\right) \Gamma \left(
1/4\right) }{\Gamma \left( 5/2\right) \Gamma \left( 3/4\right) }}=0.464...<%
\frac{5}{8}=p_{0}, \\
\delta \left( p_{0}^{\ast }\right) &=&\frac{1}{4}-p_{0}^{\ast }=-0.214...<0,
\\
\gamma _{1}\left( p_{0}^{\ast }\right) &=&-4\left( p_{0}^{\ast }\right) ^{2}-%
\frac{25}{4}p_{0}^{\ast }+\frac{65}{16}=0.299...>0.
\end{eqnarray*}%
By Theorem \ref{MT-4} we obtain

\begin{corollary}
The inequality%
\begin{equation*}
\left[ F\left( \frac{1}{2},\frac{9}{4};3;x\right) \right] ^{-2}>M_{p_{0}^{%
\ast }}\left( 1-x,\frac{3}{4}\right)
\end{equation*}%
holds for $x\in \left( 0,1\right) $ with the best constants $p_{0}^{\ast
}=0.464...$.
\end{corollary}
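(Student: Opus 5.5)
The last statement is a corollary: for $(a,b,c)=(-1/2,9/4,3)$ we have $[F(1/2,9/4;3;x)]^{-2}>M_{p_{0}^{\ast}}(1-x,3/4)$ on $(0,1)$, with $p_{0}^{\ast}=0.464\ldots$ best possible. The plan is to read it off from Theorem \ref{MT-4}, exactly as the preceding corollaries were read off from Theorems \ref{MT-3} and \ref{MT-4}.

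\textbf{Step 1: hypothesis (i) of Theorem \ref{MT-4}.} Here $a=-1/2<1$ and $b=9/4>0$. Also $c=3>\max\{b,b-a\}=\max\{9/4,11/4\}=11/4$. Moreover $-a=1/2$, $b/c=3/4$, and $1/a=-2$, so the left-hand side of (\ref{Hm>Pm-p0*}) is precisely $[F(1/2,9/4;3;x)]^{-2}$.

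\textbf{Step 2: evaluate $p_{0}$ and $p_{0}^{\ast}$.} From (\ref{p0}), $p_{0}=(a+c)/(1+c)=5/8$. From (\ref{p0*}), $\ln(1-b/c)=-\ln 4$ and
\begin{equation*}
\frac{\Gamma(c)\Gamma(c+a-b)}{\Gamma(c+a)\Gamma(c-b)}=\frac{\Gamma(3)\Gamma(1/4)}{\Gamma(5/2)\Gamma(3/4)},
\end{equation*}
which gives $p_{0}^{\ast}=\ln 2/\ln[\Gamma(3)\Gamma(1/4)/(\Gamma(5/2)\Gamma(3/4))]$. To certify $p_{0}^{\ast}<5/8$ I would use $\Gamma(5/2)=3\sqrt{\pi}/4$ and $\Gamma(1/4)/\Gamma(3/4)=\Gamma(1/4)^{2}/(\pi\sqrt{2})$, together with rigorous enclosures such as $\Gamma(1/4)\in(3.6256,3.6257)$. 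This yields $p_{0}^{\ast}\in(0.46,0.47)$, so hypothesis (ii) holds with margin.

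\textbf{Step 3: the sign conditions.} First, $\delta(p_{0}^{\ast})=a-b+c-p_{0}^{\ast}=1/4-p_{0}^{\ast}<0$ is immediate from $p_{0}^{\ast}>0.46$. Next, substituting $(a,b,c)$ into (\ref{g1p}) gives
\begin{equation*}
\gamma_{1}(p)=-4p^{2}-\tfrac{25}{4}p+\tfrac{65}{16}.
\end{equation*}
This is decreasing for $p>0$, so on the enclosure $p_{0}^{\ast}\in(0.46,0.47)$ its minimum is at $p=0.47$. There $-4(0.47)^{2}-\tfrac{25}{4}(0.47)+\tfrac{65}{16}=-0.8836-2.9375+4.0625=0.2414>0$, hence $\gamma_{1}(p_{0}^{\ast})>0$. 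Consequently $(a,b,c)\in E_{-1,\pm1,1}$. Since $\delta(p_{0}^{\ast})\neq0$, the triple does not lie in $E_{0,0,1}$, and Theorem \ref{MT-4} yields the inequality.

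\textbf{Step 4: sharpness.} Optimality of $p_{0}^{\ast}$ follows from the limit $x\to1^{-}$. The left side tends to $F(-a,b;c;1)^{1/a}$, while $M_{p}(0,3/4)=(1/4)^{1/p}$, and $p_{0}^{\ast}$ is by definition the unique $p$ equating these two values. Because $p\mapsto M_{p}$ is increasing, any $p>p_{0}^{\ast}$ makes the inequality fail near $x=1$.

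\textbf{Main obstacle.} The only delicate point is rigour in the numerics of Step 2, namely tight enough enclosures of $\Gamma(1/4)$ and $\pi$. The margins are comfortable: $\gamma_{1}\approx0.3$ and $p_{0}^{\ast}$ is about $0.16$ below $5/8$, so crude bounds suffice.
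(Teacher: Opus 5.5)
Your proposal is correct and follows the paper's argument: you verify $p_0^{\ast}\approx 0.464<5/8=p_0$, $\delta(p_0^{\ast})=1/4-p_0^{\ast}<0$ and $\gamma_1(p_0^{\ast})=-4(p_0^{\ast})^2-\tfrac{25}{4}p_0^{\ast}+\tfrac{65}{16}>0$, conclude $(a,b,c)\in E_{-1,\pm1,1}\setminus E_{0,0,1}$, and invoke Theorem \ref{MT-4}, with sharpness coming from the limit $x\to1^{-}$. Your enclosure $p_0^{\ast}\in(0.46,0.47)$ together with the monotonicity of $\gamma_1$ only makes the paper's numerical checks rigorous.
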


\begin{remark}
\label{R-C3,4}In the above two corollaries, $\min \left\{ 2b,1-2a\right\}
<c<\max \left\{ 2b,1-2a\right\} $. Even if Conjecture \ref{Cj-BRT} is proved
to be valid, it still does not work.
\end{remark}

\section{Preliminaries}

\subsection{Two tools}

In order to prove our main results, we need two tools. The first tool is the
NP and PN-type power series and their signs rule. Recall that if every term
of a real sequence is nonnegative (nonpositive) and at least one is
non-zero, then this sequence is called a \textbf{positive (negative) sequence%
}. A real sequence $\left\{ a_{n}\right\} _{n\geq 0}$ is called an
negative-positive-type sequence, NP-type sequence for short, if there sexist
$m\in \mathbb{N}$ such that the subsequences $\left\{ a_{n}\right\} _{0\leq
n\leq m}$ and $\left\{ a_{n}\right\} _{n>m}$ are negative and positive
sequences, respectively. $\left\{ -a_{n}\right\} _{n\geq 0}$ is called a
positive-negative-type sequence, PN-type sequence for short.
Correspondingly, the normal power series $S\left( t\right)
=\sum_{k=0}^{\infty }a_{k}t^{k}$ is said to be NP-type, if the real sequence
$\left\{ a_{n}\right\} _{n\geq 0}$ is NP-type. $-S\left( t\right) $ is said
to be a PN-type power series (see \cite{Yang-RACSAM-115-2021-8}).

For the NP and PN-type power series, we have a simple but efficient signs
rule, which was first proved in \cite{Yang-arXiv-1705.05703} (see also \cite%
{Yang-AADM-13-2019}, \cite{Yang-JMAA-462-2018}).

\begin{proposition}
\label{P-sgnS}Let $S\left( x\right) $ be an NP-type power series converging
on the interval $\left( 0,r\right) $ ($r>0$). (i) If $S\left( r^{-}\right)
\leq 0$, then $S\left( x\right) <0$ for all $x\in \left( 0,r\right) $. (ii)
If $S\left( r^{-}\right) >0$, then there is a unique $x_{0}\in \left(
0,r\right) $ such that $S\left( x\right) <0$ for $x\in \left( 0,x_{0}\right)
$ and $S\left( x\right) >0$ for $x\in \left( x_{0},r\right) $.
\end{proposition}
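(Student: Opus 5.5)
The plan is to reduce everything to the sign pattern of the coefficients and the behaviour at the right endpoint. Write $S(x)=\sum_{k\ge0}a_kx^k$ with $\{a_k\}$ NP-type: there is $m$ with $a_k\le0$ for $0\le k\le m$ (not all zero) and $a_k\ge0$ for $k>m$ (not all zero). I will factor out $x^{m}$ and differentiate. The aim is to show that $T(x)=x^{-m}S(x)$ is strictly increasing on $(0,r)$ and satisfies $T(0^+)<0$ or at least $T(x)<0$ for small $x$. The conclusions about sign changes of $S$ then transfer directly, since $x^m>0$.

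\emph{Step 1.} Compute
\[
T(x)=\sum_{k=0}^{m}a_kx^{k-m}+\sum_{k>m}a_kx^{k-m}.
\]
Differentiating termwise, which is justified inside the radius of convergence, gives
\[
T'(x)=\sum_{k<m}(k-m)a_kx^{k-m-1}+\sum_{k>m}(k-m)a_kx^{k-m-1}.
\]
For $k<m$ we have $k-m<0$ and $a_k\le0$, so each such term is $\ge0$. For $k>m$ we have $k-m>0$ and $a_k\ge0$, so each such term is also $\ge0$. The positive tail has at least one nonzero coefficient, so $T'(x)>0$ on $(0,r)$ and $T$ is strictly increasing.

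\emph{Step 2.} Examine the left endpoint. If some $a_k<0$ with $k<m$, then $T(x)\to-\infty$ as $x\to0^+$. Otherwise the only possibly nonzero negative coefficient is $a_m<0$, and $T(0^+)=a_m<0$. In either case $T<0$ near $0$.

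\emph{Step 3.} Conclude. Since $T$ is strictly increasing, $T(r^-)$ exists in $(-\infty,+\infty]$, and $\operatorname{sgn}S(r^-)=\operatorname{sgn}T(r^-)$ because $x^m\to r^m>0$. This also holds when the limit of $S$ is $+\infty$.
\begin{itemize}
\item[(i)] If $S(r^-)\le0$, then $T(r^-)\le0$. Strict monotonicity then forces $T(x)<T(r^-)\le0$ for all $x\in(0,r)$, hence $S<0$ on $(0,r)$.
\item[(ii)] If $S(r^-)>0$, then $T$ is continuous and strictly increasing from a negative value (or $-\infty$) to a positive one. By the intermediate value theorem it has exactly one zero $x_0\in(0,r)$, with $T<0$ on $(0,x_0)$ and $T>0$ on $(x_0,r)$. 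The same signs hold for $S$.
\end{itemize}

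\emph{Main obstacle.} The only delicate point is making sure that strict monotonicity and the endpoint sign are not spoiled by degenerate cases. These include all negative coefficients vanishing except $a_m$, the case $m=0$, and an infinite limit at $r^-$. The choice of normalization exponent $m$, namely the index where the sign switches, is what makes every term of $T'$ nonnegative. The "at least one nonzero" clause in the definition of a positive sequence is what upgrades this to strict positivity.
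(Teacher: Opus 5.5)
Your proof is correct. The paper does not reprove this proposition; it quotes it from \cite{Yang-arXiv-1705.05703}. Your argument is the standard one for this signs rule: multiply by $x^{-m}$ so that every term of the derivative is nonnegative, get strict monotonicity from a nonzero coefficient in the positive tail, and read the signs off the endpoint limits of $T=x^{-m}S$. You treat $T(0^+)=-\infty$ and $T(0^+)=a_m<0$ as separate cases, and that is what the paper's definition of a negative sequence requires, since it allows $a_m=0$ as long as some earlier coefficient is nonzero.
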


\begin{remark}
\label{R-S><0}For an NP-type power series, it follows from Proposition \ref%
{P-sgnS} that if there is a $x_{1}\in \left( 0,r\right) $ such that $S\left(
x_{1}\right) <0$, then we have $S\left( x\right) <0$ for all $x\in \left(
0,x_{1}\right) $; if there is a $x_{2}\in \left( 0,r\right) $ such that $%
S\left( x_{2}\right) >0$, then we have $S\left( x\right) >0$ for all $x\in
\left( x_{2},r\right) $. For a PN-type power series $-S\left( x\right) $,
the signs of $-S\left( x\right) $ on $\left( 0,x_{1}\right) $ and $\left(
x_{2},r\right) $ are reversed.
\end{remark}

Proposition \ref{P-sgnS} has been applied in the study for certain special
functions, see for example, \cite{Belzunce-I:M-E-40-2007}, \cite%
{Yang-JIA-157-2015}, \cite{Yang-MIA-21-2018-2}, \cite{Yang-MIA-21-2018-4},
\cite{Yang-JMAA-467-2018}, \cite{Yang-AMC-348-2019}, \cite%
{Yang-JMAA-470-2019}, \cite{Yang-AAA-2014-702718}, \cite{Yang-JIA-2017-210},
\cite{Yang-JMI-12-2018}.

The second tool is a recurrence relation involving the hypergeometric
functions, which first appeared in \cite[Corollary 2]{Yang-arxiv-2204.04709}%
. Several special cases of Proposition \ref{P-PH-rr} can be seen in \cite%
{Yang-JMAA-467-2018}, \cite{Yang-RACSAM-115-2021}, \cite%
{Zhao-RACSAM-117-2023}.

\begin{proposition}[{\protect\cite[Corollary 2]{Yang-arxiv-2204.04709}}]
\label{P-PH-rr}Let $\left( a-1\right) \left( c-a\right) \neq 0$ and $%
-c\notin \mathbb{N\cup }\left\{ 0\right\} $. Then for $p\in \mathbb{R}$, we
have%
\begin{equation}
U\left( x\right) =\left( 1-x\right) ^{p}F\left( a,b;c;x\right)
=\sum_{n=0}^{\infty }u_{n}x^{n},  \label{U}
\end{equation}%
with $u_{0}=1$, $u_{1}=ab/c-p$ and for $n\geq 1$,%
\begin{eqnarray*}
u_{n+1} &=&\frac{2n^{2}+\left( a+b+c-2p-1\right) n+ab-cp}{\left( n+1\right)
\left( n+c\right) }u_{n} \\
&&-\frac{\left( n+a-p-1\right) \left( n+b-p-1\right) }{\left( n+1\right)
\left( n+c\right) }u_{n-1}.
\end{eqnarray*}
\end{proposition}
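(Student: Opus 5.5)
The plan is to convert the hypergeometric differential equation for $F(a,b;c;x)$ into a differential equation for $U(x)=(1-x)^{p}F(a,b;c;x)$, and then compare coefficients of $x^{n}$. This should produce exactly the three-term recurrence stated in Proposition \ref{P-PH-rr}.

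First, recall that $y=F(a,b;c;x)$ satisfies
\begin{equation*}
x(1-x)y''+\left[ c-(a+b+1)x\right] y'-aby=0 .
\end{equation*}
Write $y=(1-x)^{-p}U$. Then
\begin{eqnarray*}
y' &=&(1-x)^{-p}\left[ U'+\frac{pU}{1-x}\right] , \\
y'' &=&(1-x)^{-p}\left[ U''+\frac{2pU'}{1-x}+\frac{p(p+1)U}{(1-x)^{2}}\right] .
\end{eqnarray*}
Substitute these and multiply by $(1-x)^{p+1}$. Collecting terms, and using $p(p+1)-p(a+b+1)+ab=(p-a)(p-b)$ in the coefficient of $U$, gives
\begin{equation*}
x(1-x)^{2}U''+(1-x)\left[ c+(2p-a-b-1)x\right] U'+\left[ (pc-ab)+(p-a)(p-b)x\right] U=0 .
\end{equation*}
All coefficients are now polynomials of degree at most $3$, and each term $x^{k}U^{(j)}$ shifts indices by $k-j\in\{-1,0,1\}$. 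Hence the recurrence involves only three consecutive coefficients.

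Second, insert $U=\sum u_{n}x^{n}$ (convergent on $(0,1)$) and read off the coefficient of $x^{n}$ for each term:
\begin{itemize}
\item The term $x(1-2x+x^{2})U''$ contributes $(n+1)n\,u_{n+1}-2n(n-1)u_{n}+(n-1)(n-2)u_{n-1}$.
\item The first-order coefficient expands as $c+(2p-a-b-1-c)x-(2p-a-b-1)x^{2}$. It contributes $c(n+1)u_{n+1}+(2p-a-b-1-c)n\,u_{n}-(2p-a-b-1)(n-1)u_{n-1}$.
\item The zeroth-order term contributes $(pc-ab)u_{n}+(p-a)(p-b)u_{n-1}$.
\end{itemize}
Summing the three contributions, the coefficient of $u_{n+1}$ is $(n+1)(n+c)$. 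The coefficient of $u_{n}$ is $-\left[ 2n^{2}+(a+b+c-2p-1)n+ab-cp\right]$. For the coefficient of $u_{n-1}$, set $m=n-1$; it becomes $m^{2}+(a+b-2p)m+(p-a)(p-b)=(m+a-p)(m+b-p)$, that is, $(n+a-p-1)(n+b-p-1)$.

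Dividing by $(n+1)(n+c)$, which is nonzero since $-c\notin\mathbb{N}\cup\{0\}$, gives the stated recurrence for $n\geq 1$. The same identity at $n=0$ (with $u_{-1}=0$) gives $c\,u_{1}+(pc-ab)u_{0}=0$. With $u_{0}=U(0)=1$ this yields $u_{1}=ab/c-p$.

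The main obstacle is only bookkeeping: carrying out the substitution correctly and recognizing the factorizations of the $U$-coefficient and of the $u_{n-1}$-coefficient. I expect the hypothesis $(a-1)(c-a)\neq 0$ not to be needed in this argument, beyond the nondegeneracy $-c\notin\mathbb{N}\cup\{0\}$. I would note that it is presumably inherited from the more general setting of \cite{Yang-arxiv-2204.04709}.
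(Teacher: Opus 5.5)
Your derivation is correct. I checked the transformed equation $x(1-x)^{2}U''+(1-x)[c+(2p-a-b-1)x]U'+[(pc-ab)+(p-a)(p-b)x]U=0$, and the three contributions to the coefficient of $x^{n}$. I also checked the factorization of the $u_{n-1}$-coefficient as $(n+a-p-1)(n+b-p-1)$ and the value $u_{1}=ab/c-p$ from the $n=0$ identity. All of it holds.

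The paper gives no proof of this statement. It imports it as a black box from \cite[Corollary 2]{Yang-arxiv-2204.04709}, hypothesis $(a-1)(c-a)\neq 0$ included. Your route is a short, self-contained verification through the hypergeometric differential equation. Its real payoff is the point you make at the end: the argument never uses $(a-1)(c-a)\neq 0$.

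This is not cosmetic. The paper applies the proposition, to obtain (\ref{un-rr}), with $(a,b,c,p)$ replaced by $(1-a,b,c+1,p-1)$, so the cited hypothesis becomes $-a(c+a)\neq 0$. Since $c>\max\{b,b-a\}$ forces $c+a>b>0$, the cited hypothesis fails exactly at $a=0$. That value is allowed in Theorems \ref{MT-3} and \ref{MT-4} and is used in the remark following Corollary \ref{C1}. Your argument covers it.

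The only thing to tighten is the phrase ``convergent on $(0,1)$''. Comparing coefficients needs the identity in a neighbourhood of $x=0$. That holds because $U$ is analytic on $|x|<1$ (radius of convergence at least $1$), so the differential equation is an identity of power series there.
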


\subsection{Recurrence relations of the coefficients in the power series of $%
g_{p}\left( x\right) $}

Let%
\begin{equation}
f_{p}\left( x\right) =\frac{\left[ F\left( -a,b;c;x\right) \right] ^{1/a}}{%
\left[ 1-b/c+\left( b/c\right) \left( 1-x\right) ^{p}\right] ^{1/p}}.
\label{fp}
\end{equation}%
Logarithmic differentiation for (\ref{fp}) yields%
\begin{eqnarray}
\frac{f_{p}^{\prime }\left( x\right) }{f_{p}\left( x\right) } &=&\frac{1}{a}%
\frac{\left( -ab/c\right) F\left( 1-a,b+1;c+1;x\right) }{F\left(
-a,b;c;x\right) }+\frac{1}{p}\frac{p\left( b/c\right) \left( 1-x\right)
^{p-1}}{\left( 1-b/c+\left( b/c\right) \left( 1-x\right) ^{p}\right) }
\notag \\
&=&\frac{b\left( c-b\right) }{c}\frac{g_{p}\left( x\right) }{\left(
c-b+b\left( 1-x\right) ^{p}\right) F\left( -a,b;c;x\right) },  \label{dfp}
\end{eqnarray}

where%
\begin{equation}
\begin{array}{rl}
g_{p}\left( x\right) & =\dfrac{c}{c-b}\left( 1-x\right) ^{p-1}F\left(
-a,b;c;x\right) \bigskip \\
& -\left( 1+\dfrac{b}{c-b}\left( 1-x\right) ^{p}\right) F\left(
1-a,b+1;c+1;x\right) .%
\end{array}
\label{gp}
\end{equation}%
By Gauss contiguous relations \cite[p. 558]{Abramowitz-HMFFGMT-1972}, $%
g_{p}\left( x\right) $ can be simplified as%
\begin{equation}
g_{p}\left( x\right) =\left( 1-x\right) ^{p-1}F\left( 1-a,b;c+1;x\right)
-F\left( 1-a,b+1;c+1;x\right) \text{.}  \label{gp-a1}
\end{equation}%
Employing Cauchy product formula, $g_{p}\left( x\right) $ has the following
power series representation:%
\begin{equation*}
g_{p}\left( x\right) =\sum_{n=0}^{\infty }\left( u_{n}-v_{n}\right)
x^{n}=\sum_{n=0}^{\infty }\xi _{n}x^{n},
\end{equation*}%
where $\xi _{n}\equiv \xi _{n}\left( p\right) =u_{n}-v_{n}$ with%
\begin{eqnarray}
u_{n} &\equiv &u_{n}\left( p\right) =\sum_{k=0}^{n}\frac{\left( 1-p\right)
_{n-k}}{\left( n-k\right) !}\frac{\left( 1-a\right) _{k}\left( b\right) _{k}%
}{k!\left( c+1\right) _{k}},  \label{un} \\
v_{n} &=&\frac{\left( 1-a\right) _{n}\left( b+1\right) _{n}}{n!\left(
c+1\right) _{n}}  \label{vn}
\end{eqnarray}%
for all $n\geq 0$.

\begin{remark}
It should be noted that, $u_{n},v_{n}>0$ for all $n\geq 0$ if $a<1$, $c>b>0$
and $p\in \left( 0,1\right) $.
\end{remark}

\begin{remark}
An application of Proposition \ref{P-PH-rr} shows that the sequence $\left\{
u_{n}\right\} _{n\geq 1}$ satisfies%
\begin{equation}
u_{n+1}=\alpha _{n}u_{n}-\beta _{n}u_{n-1}\text{ for }n\geq 1  \label{un-rr}
\end{equation}%
with $u_{0}=1$ and $u_{1}=\left( 1-a\right) b/\left( c+1\right) -p+1$, where%
\begin{eqnarray}
\alpha _{n} &=&\frac{2n^{2}+\left( c-a+b-2p+3\right) n+\left( 1-a\right)
b+\left( c+1\right) \left( 1-p\right) }{\left( n+1\right) \left(
n+c+1\right) },  \label{alphn} \\
\beta _{n} &=&\frac{\left( n+1-a-p\right) \left( n+b-p\right) }{\left(
n+1\right) \left( n+c+1\right) }.  \label{betan}
\end{eqnarray}%
It is clear that $\alpha _{n},\beta _{n}>0$ for $n\geq 1$ if $a<1,c>b>0$, $%
p\in \left( 0,1\right) $.
\end{remark}

Substituting $u_{n}=\xi _{n}+v_{n}$ into (\ref{un-rr}) gives%
\begin{equation*}
\xi _{n+1}+v_{n+1}=\alpha _{n}\left( \xi _{n}+v_{n}\right) -\beta _{n}\left(
\xi _{n-1}+v_{n-1}\right) ,
\end{equation*}%
which can be written as%
\begin{eqnarray}
\xi _{n+1}-\alpha _{n}\xi _{n}+\beta _{n}\xi _{n-1} &=&-v_{n+1}+\alpha
_{n}v_{n}-\beta _{n}v_{n-1}  \label{xin-rr-1} \\
&=&\left( -\frac{v_{n+1}}{v_{n-1}}+\alpha _{n}\frac{v_{n}}{v_{n-1}}-\beta
_{n}\right) v_{n-1}\text{.}  \notag
\end{eqnarray}%
It is easy to verify that%
\begin{equation}
\frac{v_{n+1}}{v_{n}}=\frac{\left( n+1-a\right) \left( n+b+1\right) }{\left(
n+1\right) \left( n+c+1\right) },  \label{vn+1/vn}
\end{equation}%
from which it is deduced that%
\begin{equation}
-\frac{v_{n+1}}{v_{n-1}}+\alpha _{n}\frac{v_{n}}{v_{n-1}}-\beta _{n}=\tau
_{n}\gamma _{n},  \label{vn-gn}
\end{equation}%
where%
\begin{eqnarray}
\gamma _{n} &=&\gamma _{n}\left( p\right) =n^{2}p\left( a-b+c-p\right)
+ab\left( p\left( c+1\right) -\left( c+a\right) \right)  \label{gamn} \\
&&-n\left[ cp^{2}-\left( a-b+c+2ab\right) p+a\left( a+c\right) \right] ,
\notag \\
\tau _{n} &=&\left[ n\left( n+1\right) \left( n+c\right) \left( n+c+1\right) %
\right] ^{-1}.  \label{tn}
\end{eqnarray}%
Then we obtain the recurrence relation of $\xi _{n}$ as follows.

\begin{lemma}
\label{L-xin-rr}Let $u_{n}$ and $v_{n}$ be defined by (\ref{un}) and (\ref%
{vn}), respectively. The sequence $\xi _{n}\equiv \xi _{n}\left( p\right)
=u_{n}-v_{n}$ satisfies the recurrence relation%
\begin{equation}
\xi _{n+1}-\alpha _{n}\xi _{n}+\beta _{n}\xi _{n-1}=\tau _{n}\gamma
_{n}v_{n-1}  \label{xin-rr-0}
\end{equation}%
where $\alpha _{n}$ and $\beta _{n}$ are given by (\ref{alphn}) and (\ref%
{betan}), respectively. Moreover, if $a<1,c>b>0$, $p\in \left( 0,1\right) $
then $\alpha _{n},\beta _{n}>0$ for $n\geq 1$.
\end{lemma}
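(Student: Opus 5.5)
The statement to prove is Lemma \ref{L-xin-rr}. The recurrence (\ref{xin-rr-0}) has in fact already been derived in the text preceding the statement. What remains is to justify two ingredients and then check the positivity claim.

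First, we justify the recurrence (\ref{un-rr}) for $u_n$. By the Cauchy product, the series $\sum u_n x^n$ is exactly $(1-x)^{p-1}F(1-a,b;c+1;x)$. So we apply Proposition \ref{P-PH-rr} with $(a,b,c,p)$ replaced by $(1-a,b,c+1,p-1)$. Substituting into the formula of Proposition \ref{P-PH-rr}, the numerator becomes
\[
2n^2+(1-a+b+c+1-2p+2-1)n+(1-a)b-(c+1)(p-1),
\]
which gives (\ref{alphn}). The second factor becomes $(n-a-p+1)(n+b-p)$, which gives (\ref{betan}). The initial values are $u_0=1$ and $u_1=(1-a)b/(c+1)-(p-1)$. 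The hypothesis $(a'-1)(c'-a')\neq0$ of Proposition \ref{P-PH-rr} here reads $a(c+a)\neq0$. If it fails, the recurrence still holds by continuity in the parameters, since both sides are polynomial and rational in $a$. Alternatively, one can verify it directly from the hypergeometric ODE.

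Second, we subtract the analogous expression for $v_n$, as already done in (\ref{xin-rr-1}). Using (\ref{vn+1/vn}), we compute
\[
-\frac{v_{n+1}}{v_{n-1}}+\alpha_n\frac{v_n}{v_{n-1}}-\beta_n .
\]
This is done by writing $v_{n+1}/v_{n-1}$ and $v_n/v_{n-1}$ as products of the ratio (\ref{vn+1/vn}) at indices $n$ and $n-1$. Then we put everything over the common denominator $n(n+1)(n+c)(n+c+1)$. The numerator is a polynomial in $n$ that must reduce to $\gamma_n$ in (\ref{gamn}). This is the main obstacle, but it is a routine symbolic expansion. The cubic and quartic terms in $n$ cancel, and we compare the coefficients of $n^2$, $n$ and $1$. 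A useful consistency check is $p=p_0$ or small $n$, to confirm the constant term $ab(p(c+1)-(c+a))$. This yields (\ref{vn-gn}) and hence (\ref{xin-rr-0}).

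Finally, positivity. Assume $a<1$, $c>b>0$ and $p\in(0,1)$. The denominators $(n+1)(n+c+1)$ are positive. In $\beta_n$, we have $n+1-a-p>n-p>0$ and $n+b-p>n-1\ge0$, so $\beta_n>0$. In $\alpha_n$, write the numerator as
\[
2n^2+(c-a+b-2p+3)n+(1-a)b+(c+1)(1-p).
\]
Here $c-a+b+3-2p>c+b+2-2p>0$ since $c>0$ and $1-a>0$, and the constant term is positive. Hence $\alpha_n>0$ for all $n\ge1$.
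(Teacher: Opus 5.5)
Your proposal is correct and follows the paper's own derivation. You apply Proposition \ref{P-PH-rr} with parameters $(1-a,b,c+1,p-1)$ to get (\ref{un-rr}), substitute $u_n=\xi_n+v_n$ to get (\ref{xin-rr-1}), and reduce $-v_{n+1}/v_{n-1}+\alpha_n v_n/v_{n-1}-\beta_n$ over $n(n+1)(n+c)(n+c+1)$; this does give exactly $\gamma_n$, since the $n^4$ and $n^3$ terms cancel and the remaining coefficients are $p(a-b+c-p)$, $-[cp^2-(a-b+c+2ab)p+a(a+c)]$ and $ab(p(c+1)-(c+a))$. Your continuity argument for the excluded cases $a(c+a)=0$ and your explicit positivity estimates for $\alpha_n,\beta_n$ only fill in details the paper leaves implicit or calls ``clear''.
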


To prove main results we need to write (\ref{xin-rr-0}) in another form.

\begin{lemma}
\label{L-xin-rr-a1}Let $u_{n}$ and $v_{n}$ be defined by (\ref{un}) and (\ref%
{vn}), respectively. The sequence $\xi _{n}\equiv \xi _{n}\left( p\right)
=u_{n}-v_{n}$ satisfies the recurrence relation%
\begin{equation}
\left( \xi _{n+1}-\theta _{n}\xi _{n}\right) -\lambda _{n}\left( \xi
_{n}-\theta _{n-1}\xi _{n-1}\right) =\tau _{n}\gamma _{n}u_{n-1},
\label{xin-rr-2}
\end{equation}%
where%
\begin{eqnarray}
\theta _{n} &=&\frac{v_{n+1}}{v_{n}}=\frac{\left( n+1-a\right) \left(
n+b+1\right) }{\left( n+1\right) \left( n+c+1\right) },  \label{thn} \\
\lambda _{n} &=&\lambda _{n}\left( p\right) =\alpha _{n}-\theta _{n}=\frac{%
n^{2}+\left( c-2p+1\right) n+\left( a+c-p-cp\right) }{\left( n+1\right)
\left( n+c+1\right) },  \label{ln}
\end{eqnarray}%
$\gamma _{n}=\gamma _{n}\left( p\right) $ and $\tau _{n}$ are given by (\ref%
{gamn}) and (\ref{tn}), respectively. Moreover, if $a<1$, $b>0$, $c>\max
\left\{ -a,b\right\} $, then $\theta _{n}>0$ for $n\geq 1$, $\lambda
_{n}\left( p\right) >0$ for $n\geq 2$.
\end{lemma}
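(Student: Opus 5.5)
The plan is to derive (\ref{xin-rr-2}) directly from Lemma \ref{L-xin-rr}, by splitting the coefficient $\alpha _{n}$ as $\theta _{n}+\lambda _{n}$ and absorbing the resulting mismatch in the $\xi _{n-1}$ coefficient into the right-hand side. Expanding the left-hand side of (\ref{xin-rr-2}) with $\lambda _{n}=\alpha _{n}-\theta _{n}$ gives
\begin{equation*}
\left( \xi _{n+1}-\theta _{n}\xi _{n}\right) -\lambda _{n}\left( \xi
_{n}-\theta _{n-1}\xi _{n-1}\right) =\xi _{n+1}-\alpha _{n}\xi _{n}+\lambda
_{n}\theta _{n-1}\xi _{n-1}.
\end{equation*}
By (\ref{xin-rr-0}) this equals
\begin{equation*}
\tau _{n}\gamma _{n}v_{n-1}+\left( \lambda _{n}\theta _{n-1}-\beta
_{n}\right) \xi _{n-1}.
\end{equation*}
Since $u_{n-1}=\xi _{n-1}+v_{n-1}$, the claim (\ref{xin-rr-2}) is therefore equivalent to the single rational identity
\begin{equation*}
\lambda _{n}\theta _{n-1}-\beta _{n}=\tau _{n}\gamma _{n}.
\end{equation*}

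To verify this identity, I would multiply through by $\tau _{n}^{-1}=n\left( n+1\right) \left( n+c\right) \left( n+c+1\right) $, using (\ref{thn}) for $\theta _{n-1}=\left( n-a\right) \left( n+b\right) /\left( n\left( n+c\right) \right) $, together with (\ref{ln}) and (\ref{betan}). It then reduces to the polynomial identity
\begin{equation*}
\left[ n^{2}+\left( c-2p+1\right) n+a+c-p-cp\right] \left( n-a\right) \left(
n+b\right) -n\left( n+c\right) \left( n+1-a-p\right) \left( n+b-p\right)
=\gamma _{n}.
\end{equation*}
The $n^{4}$ terms cancel trivially. The $n^{3}$ coefficients also cancel, since $\left( c-2p+1\right) +\left( b-a\right) =\left( c+1-a-p\right) +\left( b-p\right) $. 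What remains is to match the $n^{2}$, $n^{1}$ and $n^{0}$ coefficients with those of (\ref{gamn}). The constant term is immediate: $-ab\left( a+c-p-cp\right) =ab\left( p\left( c+1\right) -\left( c+a\right) \right) $. The $n^{2}$ and $n^{1}$ coefficients require a short but careful expansion, and this bookkeeping is the only real (if routine) obstacle.

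For the sign assertions, start with $\theta _{n}$. Here $n+1-a>0$ because $a<1$, $n+b+1>0$ because $b>0$, and $n+c+1>0$ because $c>b>0$; hence $\theta _{n}>0$ for all $n\geq 0$.

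For $\lambda _{n}$ the denominator is positive, so only the numerator
\begin{equation*}
n^{2}+\left( c+1\right) n+a+c-p\left( 2n+1+c\right)
\end{equation*}
needs attention. Its $p$-coefficient $-\left( 2n+1+c\right) $ is negative, so for $p\leq 1$ (the range relevant here, since $0<p_{0}^{\ast }<1$ by Remark \ref{R-p0*<1}) the numerator is at least $n^{2}+\left( c-1\right) n+a-1$. At $n=2$ this lower bound equals $1+a+2c$. Since $c>-a$ and $c>0$, we have $a+2c>c>0$, so the bound is positive. It is increasing for $n\geq 2$ because its derivative in $n$ is $2n+c-1>0$. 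Hence $\lambda _{n}\left( p\right) >0$ for all $n\geq 2$, which completes the plan.
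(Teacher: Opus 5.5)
Your proposal is correct and is essentially the paper's argument: both split $\alpha_n=\theta_n+\lambda_n$, trade $\xi_{n-1}$ against $u_{n-1}-v_{n-1}$, and reduce everything to $\lambda_n\theta_{n-1}-\beta_n=\tau_n\gamma_n$. The paper gets this identity at once by noting $\lambda_n\theta_{n-1}=\alpha_n v_n/v_{n-1}-v_{n+1}/v_{n-1}$, so it is just (\ref{vn-gn}); your remaining $n^2$ and $n^1$ coefficient checks do work out (giving $p(a-b+c-p)$ and $-[cp^2-(a-b+c+2ab)p+a(a+c)]$) but are unnecessary. Your positivity argument for $\lambda_n$ is the same elementary bound as the paper's decomposition of the numerator, and you are right to impose $p\le 1$ explicitly: the lemma as written omits it, yet the paper's proof also needs it, through the term $2(1-p)n$.
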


\begin{proof}
We write the relation (\ref{xin-rr-1}) as
\begin{eqnarray*}
&&\left( \xi _{n+1}-\frac{v_{n+1}}{v_{n}}\xi _{n}\right) -\left( \alpha _{n}-%
\frac{v_{n+1}}{v_{n}}\right) \left( \xi _{n}-\frac{v_{n}}{v_{n-1}}\xi
_{n-1}\right) \\
&=&\left( \left( \alpha _{n}-\frac{v_{n+1}}{v_{n}}\right) \frac{v_{n}}{%
v_{n-1}}-\beta _{n}\right) \xi _{n-1}-v_{n+1}+\alpha _{n}v_{n}-\beta
_{n}v_{n-1}.
\end{eqnarray*}%
Substituting $\xi _{n-1}=u_{n-1}-v_{n-1}$ into the right hand side of the
above equality and simplifying yield%
\begin{eqnarray*}
&&\left( \xi _{n+1}-\frac{v_{n+1}}{v_{n}}\xi _{n}\right) -\left( \alpha _{n}-%
\frac{v_{n+1}}{v_{n}}\right) \left( \xi _{n}-\frac{v_{n}}{v_{n-1}}\xi
_{n-1}\right) \\
&=&\left( -\frac{v_{n+1}}{v_{n-1}}+\alpha _{n}\frac{v_{n}}{v_{n-1}}-\beta
_{n}\right) u_{n-1},
\end{eqnarray*}%
which, by (\ref{vn-gn}), implies (\ref{xin-rr-2}). The formula (\ref{thn})
follows from (\ref{vn+1/vn}), and (\ref{ln}) follows by an elementary
computation.

It is clear that $\theta _{n}>0$ for $n\geq 1$ provided $a<1,b,c>0$. The
numerator of the fraction in (\ref{ln}) can be written as%
\begin{equation*}
\left( n-2\right) n+2\left( 1-p\right) n+\left( a+c\right) +\left(
n-p\right) \left( c+1\right) ,
\end{equation*}%
which is positive for $n\geq 2$ if $a<1$, $b>0$, $c>\max \left\{
-a,b\right\} $, thereby completing the proof.
\end{proof}

In the proofs of Theorems \ref{MT-3} and \ref{MT-4}, we need that $\lambda
_{1}\left( p_{0}^{\ast }\right) >0$, which is not an obvious fact.

\begin{lemma}
\label{L-l1p0*>0}Let $a<1$, $b>0$, $c>\max \left\{ b,b-a\right\} $. Then%
\begin{equation*}
\lambda _{1}\left( p_{0}^{\ast }\right) =\frac{a+2c+2-\left( 3+c\right)
p_{0}^{\ast }}{2\left( c+2\right) }>0.
\end{equation*}
\end{lemma}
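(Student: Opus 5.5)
The plan is to reduce the claim to the upper bound
\begin{equation*}
p_{0}^{\ast }<q:=\frac{a+2c+2}{c+3},
\end{equation*}
because the denominator $2\left( c+2\right) $ is positive. I will split according to whether $q\geq 1$, i.e.\ whether $c\geq 1-a$.

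In the case $c\geq 1-a$ we have $a+2c+2\geq c+3$. By Remark \ref{R-p0*<1} (i), $0<p_{0}^{\ast }<1$ for $a<1$. Hence $\left( 3+c\right) p_{0}^{\ast }<3+c\leq a+2c+2$, and $\lambda _{1}\left( p_{0}^{\ast }\right) >0$ follows at once.

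The remaining case is $b-c<a<1-c$, which forces $c<1-a$ and hence $c<1$. Here the bound $p_{0}^{\ast }<1$ is too weak and a genuine Gamma-function inequality is needed. Put $L\left( a\right) =a^{-1}\ln G\left( a\right) $, where $G\left( a\right) =\Gamma \left( c\right) \Gamma \left( c+a-b\right) /\left( \Gamma \left( c+a\right) \Gamma \left( c-b\right) \right) $. Since $G\left( 1\right) =\left( c-b\right) /c$, we get $L\left( 1\right) =\ln \left( 1-b/c\right) $ and $p_{0}^{\ast }=L\left( 1\right) /L\left( a\right) $. By (\ref{1/alng<0}), both $L\left( a\right) $ and $L\left( 1\right) $ are negative. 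So the target is equivalent to
\begin{equation*}
\left( a+2c+2\right) L\left( a\right) <\left( c+3\right) L\left( 1\right) .
\end{equation*}
I will write $\ln G\left( a\right) =-\int_{0}^{a}\phi \left( s\right) ds$ with $\phi \left( s\right) =\psi \left( c+s\right) -\psi \left( c+s-b\right) $. Through the Laplace representation
\begin{equation*}
\phi \left( s\right) =\int_{0}^{\infty }\frac{e^{-\left( c+s-b\right) t}\left( 1-e^{-bt}\right) }{1-e^{-t}}dt,
\end{equation*}
$\phi $ is completely monotonic in $s$. Then $L\left( a\right) =-\int_{0}^{1}\phi \left( as\right) ds$, and the desired inequality becomes a comparison of Laplace integrals. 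It reads
\begin{equation*}
\int_{0}^{\infty }K\left( t\right) \Big[ \left( a+2c+2\right) \int_{0}^{1}e^{-ast}ds-\left( c+3\right) \int_{0}^{1}e^{-st}ds\Big] dt>0,
\end{equation*}
where $K\left( t\right) =e^{-\left( c-b\right) t}\left( 1-e^{-bt}\right) /\left( 1-e^{-t}\right) >0$. Equivalently, the kernel is built from $\left( a+2c+2\right) \left( 1-e^{-at}\right) /\left( at\right) -\left( c+3\right) \left( 1-e^{-t}\right) /t$.

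The main obstacle is the sign of this bracketed kernel, which is not constant in $t$. At $t=0$ it equals $a+c-1<0$, since $a<1-c$. For large $t$ the term $\left( a+2c+2\right) \left( e^{-at}-1\right) /\left( at\right) $ dominates: it grows exponentially when $a<0$, and when $a\in \left[ 0,1-c\right) $ one checks the asymptotics directly. I would handle this with the signs rule for Laplace integrals mentioned in the abstract, which is the integral analogue of Proposition \ref{P-sgnS}. First I would show that the kernel times $K$ changes sign exactly once, from negative to positive, by expanding it as an NP-type power series in $t$ and applying Proposition \ref{P-sgnS}. Then I would use that $K\left( t\right) e^{\left( c-b\right) t}$ is increasing, which lets the positive tail dominate as in the standard one-sign-change argument for Laplace transforms. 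If the NP-type expansion fails for some parameter range, my fallback is to reduce the claim to the single inequality $\left( a+2c+2\right) L\left( a\right) \leq \left( c+3\right) L\left( 1\right) $ at the extreme value $b\rightarrow c^{-}$ or $b\rightarrow 0^{+}$. That reduction would use the monotonicity of $p_{0}^{\ast }$ in $b$, which I have not established and would have to check separately.
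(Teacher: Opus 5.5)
Your reduction to $p_0^{\ast}<q:=(a+2c+2)/(c+3)$ is correct. So is your first case: if $a+c\ge 1$ then $q\ge 1>p_0^{\ast}$ by Remark~\ref{R-p0*<1}, a shortcut the paper does not use. But the case $a+c<1$ carries the whole content of the lemma, and there your plan has a genuine gap.

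First, the NP-type step fails. The coefficient of $t^n$ in $B(t)=(a+2c+2)\frac{1-e^{-at}}{at}-(c+3)\frac{1-e^{-t}}{t}$ is $\frac{(-1)^n}{(n+1)!}\left[(a+2c+2)a^n-(c+3)\right]$. For $-1<a<1-c$ one has $|(a+2c+2)a^n|\le a+2c+2<c+3$, so these coefficients alternate in sign. Multiplying by $K$ does not help: for $(a,b,c)=(-\frac12,\frac14,1)$ the Taylor coefficients of $KB$ begin $-\frac18,\frac{49}{64},-0.40\ldots$.

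Second, and more fundamentally, one sign change of the integrand does not determine the sign of $\int_0^\infty K(t)B(t)\,dt$. The one-sign-change argument for Laplace integrals only shows that $x\mapsto e^{xt_0}\int_0^\infty w(t)B(t)e^{-xt}dt$ is monotone, so it needs an anchor where the sign is already known. The increasing weight $w(t)=K(t)e^{(c-b)t}$ only yields $\int_0^\infty KB\,dt\ge w(t_0)\int_0^\infty B(t)e^{-(c-b)t}dt$. That leaves the explicit inequality $\frac{a+2c+2}{a}\ln\frac{c-b+a}{c-b}\ge(c+3)\ln\frac{c-b+1}{c-b}$, which you neither state nor prove. ``The positive tail dominates'' is exactly what has to be shown.

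Your fallback supplies no anchor either. As $b\to0^+$ both sides of $(a+2c+2)L(a)\le(c+3)L(1)$ tend to $0$. The limit $b\to c^-$ is inadmissible for $a<0$, since one needs $b<c+a$. The monotonicity of $p_0^\ast$ in $b$ is unproved. (Also, $c<1-a$ implies $c<1$ only when $a\ge 0$.)

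The missing idea is to anchor at $b=0$ at the level of the $b$-derivative. Write $I(b)=\int_0^\infty K(t)B(t)\,dt$. Then $I(0)=0$ and $I'(b)=\int_0^\infty\frac{t}{1-e^{-t}}B(t)e^{-(c-b)t}dt$, whose value at $b=0$ is $(a+2c+2)\frac{\psi(c+a)-\psi(c)}{a}-\frac{c+3}{c}$.

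The paper argues exactly this way, using $\mu(b)=L(a)-\frac{c+3}{a+2c+2}\ln(1-b/c)=-I(b)/(a+2c+2)$:
it computes $(c-b)\mu'(b)=\frac{c+3}{a+2c+2}-(c-b)\frac{\psi(c+a-b)-\psi(c-b)}{a}$;
it invokes the known fact that $t\mapsto t\,\frac{\psi(t+a)-\psi(t)}{a}$ is decreasing for $a<1$, which reduces $\mu'<0$ to the case $b=0$;
it proves the resulting digamma inequality $\frac{\psi(c+a)-\psi(c)}{a}>\frac{c+3}{c(a+2c+2)}$ by showing that $\mu_2(c)=\frac{c+3}{c(a+2c+2)}-\frac{\psi(c+a)-\psi(c)}{a}$ satisfies $\mu_2(c)<\mu_2(c+1)$ and $\mu_2(c+n)\to 0$.

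This handles every $a<1$ at once, so your case split is valid but unnecessary. Without this anchor, or an actual proof of the logarithmic inequality above together with the single sign change of $B$, your Case 2 is unproved.
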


\begin{proof}
By (\ref{1/alng<0}), to prove $\lambda _{1}\left( p_{0}^{\ast }\right) >0$,
it suffices to%
\begin{equation*}
\mu \left( b\right) =\frac{1}{a}\ln \frac{\Gamma \left( c\right) \Gamma
\left( c+a-b\right) }{\Gamma \left( c+a\right) \Gamma \left( c-b\right) }-%
\frac{3+c}{a+2c+2}\ln \left( 1-b/c\right) <0,
\end{equation*}%
which, due to $\mu \left( 0\right) =0$, in turn suffices to prove that $%
b\mapsto \mu \left( b\right) $ is decreasing on $\left( 0,\min \left\{
c,c+a\right\} \right) $. Differentiation yields%
\begin{equation*}
\left( c-b\right) \mu ^{\prime }\left( b\right) =-\left( c-b\right) \frac{%
\psi \left( a+c-b\right) -\psi \left( c-b\right) }{a}+\frac{3+c}{a+2c+2}%
=:\mu _{1}\left( b\right) .
\end{equation*}%
It was proved in \cite[Corollary 1]{Tian-RACSAM-115-2021} (see also \cite%
{Yang-JMAA-455-2017}) that, for $0\leq p-q\neq 1$, the function $t\mapsto
\left( t+r\right) \left[ \psi \left( t+p\right) -\psi \left( t+q\right) %
\right] /\left( p-q\right) $ is increasing (decreasing) on $\left( -q,\infty
\right) $ if and only if $r\leq \min \left\{ q,\left( p+q-1\right)
/2\right\} $ ($r\geq \max \left\{ q,\left( p+q-1\right) /2\right\} $). Then
the function%
\begin{equation*}
t\mapsto t\frac{\psi \left( t+a\right) -\psi \left( t\right) }{a}
\end{equation*}%
is decreasing on $\left( -\min \left\{ 0,a\right\} ,\infty \right) $ if $a<1$%
. This implies that $b\mapsto \mu _{1}\left( b\right) $ is also decreasing
on $\left( 0,\min \left\{ c,c+a\right\} \right) $, which yields%
\begin{equation*}
\frac{1}{c}\mu _{1}\left( b\right) <\frac{1}{c}\mu _{1}\left( 0\right) =-%
\frac{\psi \left( a+c\right) -\psi \left( c\right) }{a}+\frac{3+c}{c\left(
a+2c+2\right) }=\mu _{2}\left( c\right) .
\end{equation*}

If we prove $\mu _{2}\left( c\right) <0$, then $\mu _{1}\left( b\right) <0$,
and then $\mu ^{\prime }\left( b\right) <0$, and the proof is completed.
Now, since%
\begin{eqnarray*}
\mu _{2}\left( c+1\right) -\mu _{2}\left( c\right) &=&\frac{\psi \left(
c+1\right) -\psi \left( c+1+a\right) }{a}+\frac{4+c}{\left( c+1\right)
\left( a+2c+4\right) } \\
&&-\left[ \frac{\psi \left( c\right) -\psi \left( c+a\right) }{a}+\frac{3+c}{%
c\left( a+2c+2\right) }\right]
\end{eqnarray*}%
\begin{equation*}
=\frac{2\left( c^{2}+1-a\right) \left( a+c\right) +2c^{2}+\left(
a^{2}+1\right) c+\left( 1-a\right) \left( 5c+8\right) }{c\left( c+1\right)
\left( a+c\right) \left( a+2c+2\right) \left( a+2c+4\right) }>0,
\end{equation*}%
we have%
\begin{equation*}
\mu _{2}\left( c\right) <\mu _{2}\left( c+1\right) <\cdot \cdot \cdot
<\lim_{n\rightarrow \infty }\mu _{2}\left( c+n\right) =0,
\end{equation*}%
which completes the proof.
\end{proof}

\subsection{The signs of $\protect\gamma _{n}\left( p\right) $}

The signs of $\gamma _{n}\left( p\right) $ are needed to prove Theorems \ref%
{MT-3} and \ref{MT-4}.

\begin{lemma}
\label{L-sgngn-p}Suppose that $a<1,c>b>0$, $p\in \left( 0,1\right) $ and let
$\gamma _{n}\equiv \gamma _{n}\left( p\right) $ be defined by (\ref{gamn}).

(i) If $\delta \left( p\right) \geq \left( \leq \right) 0$, $\Delta \gamma
_{1}\left( p\right) \geq \left( \leq \right) 0$ and $\gamma _{1}\left(
p\right) \geq \left( \leq \right) 0$, then $\gamma _{n}\left( p\right) \geq
\left( \leq \right) 0$ for all $n\geq 1$.

(ii) If $\delta \left( p\right) >\left( <\right) 0$ and $\gamma _{1}\left(
p\right) \leq \left( \geq \right) 0$, or $\delta \left( p\right) =0$, $%
\Delta \gamma _{1}\left( p\right) >\left( <\right) 0$ and $\gamma _{1}\left(
p\right) <\left( >\right) 0$, then there is an $n_{1}\geq 1$ such that $%
\gamma _{n}\left( p\right) <\left( >\right) 0$ for $1\leq n\leq n_{1}-1$ and
$\gamma _{n}\left( p\right) >\left( <\right) 0$ for $n\geq n_{1}+1$.
\end{lemma}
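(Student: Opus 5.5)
The plan is to exploit that $\gamma_n(p)$, as defined in (\ref{gamn}), is a quadratic polynomial in $n$ whose coefficients are tied exactly to the three quantities $\delta(p)$, $\Delta\gamma_1(p)$ and $\gamma_1(p)$. Write
\begin{equation*}
\gamma_n(p)=An^2+Bn+C,\quad A=p\,\delta(p),\quad B=-\left[cp^2-\left(a-b+c+2ab\right)p+a\left(a+c\right)\right],
\end{equation*}
with $C=ab\left(p\left(c+1\right)-\left(c+a\right)\right)$. The first step is two routine verifications against (\ref{dg1p}) and (\ref{g1p}):
\begin{equation*}
A+B+C=\gamma_1(p)\quad\text{and}\quad 3A+B=\gamma_2(p)-\gamma_1(p)=\Delta\gamma_1(p).
\end{equation*}
So $\Delta\gamma_1(p)$ is precisely the first forward difference of the sequence $\gamma_n$. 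More generally,
\begin{equation*}
\gamma_{n+1}-\gamma_n=A(2n+1)+B=\Delta\gamma_1(p)+2A(n-1),
\end{equation*}
which is linear in $n$ with slope $2A$. Since $p\in(0,1)$, the sign of $A$ is the sign of $\delta(p)$.

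For part (i) with $\delta(p)\ge 0$, the differences $\gamma_{n+1}-\gamma_n$ are nondecreasing in $n$. Hence they are all $\ge \Delta\gamma_1(p)\ge 0$. Therefore $\gamma_n\ge\gamma_1(p)\ge 0$ for every $n\ge 1$. The reversed-sign case follows by applying the same argument to $-\gamma_n$, or simply by reversing every inequality.

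For part (ii), first take the case $\delta(p)>0$ and $\gamma_1(p)\le 0$. Then $n\mapsto\gamma_n$ is the restriction of a strictly convex quadratic with positive leading coefficient, so $\gamma_n\to+\infty$. I will argue the following two facts.
\begin{enumerate}
\item Once a term is positive at some index $m>1$, all later terms are positive. Indeed, $\gamma_m>0\ge\gamma_1$ forces $\gamma_m-\gamma_{m-1}>0$ by convexity, and the differences then keep increasing.
\item Between two indices where $\gamma\le 0$, strict convexity forces $\gamma_n<0$ at every interior index.
\end{enumerate}
I then take $n_1$ to be the last index with $\gamma_n\le 0$; it exists because $\gamma_n\to+\infty$. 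This gives $\gamma_n<0$ for $1<n\le n_1-1$ and $\gamma_n>0$ for $n\ge n_1+1$.

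The main delicate point is the boundary case $\gamma_1(p)=0$. Here I expect to take $n_1=1$ when $\gamma_2>0$. When $\gamma_2\le 0$, I would reread the statement as allowing the isolated zero at $n=1$, which is harmless for the later application to the recurrence (\ref{xin-rr-2}).

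The second subcase of (ii) is $\delta(p)=0$, $\Delta\gamma_1(p)>0$ and $\gamma_1(p)<0$. Then $A=0$, so $\gamma_n=\gamma_1(p)+(n-1)\Delta\gamma_1(p)$ is strictly increasing and linear in $n$. It is negative at $n=1$ and tends to $+\infty$. Taking $n_1$ to be the unique index at which it changes sign, with $\gamma_{n_1}$ of either sign or zero, gives the claim immediately.

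The reversed-inequality versions of both subcases follow by replacing $\gamma_n$ with $-\gamma_n$, i.e.\ by flipping the signs of $A$, $B$ and $C$ simultaneously.
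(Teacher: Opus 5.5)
Your proof is correct and follows essentially the paper's argument. Both write $\gamma_{n+1}(p)-\gamma_n(p)$ as a linear function of $n$ with slope $2p\delta(p)$ and initial value $\Delta\gamma_1(p)$, then conclude by monotonicity in (i) and by convexity (or linearity when $\delta(p)=0$) plus the sign of the leading coefficient in (ii); your two explicit facts just spell out the paper's ``convex, negative at the start, eventually positive.''

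Your caveat about $\gamma_1(p)=0$ is justified, not a weakness. The paper's proof tacitly uses $\gamma_1(p)<0$, and when $\gamma_1(p)=0\ge\gamma_2(p)$ (which can occur under the lemma's hypotheses) only your weakened form holds. That weakened form is what the paper actually invokes later (``$\gamma_n\le 0$'' in Cases 3.1 and 4 of Theorem \ref{MT-3}), and it suffices there.
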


\begin{proof}
We have%
\begin{eqnarray*}
\Delta \gamma _{n}\left( p\right) &:&=\gamma _{n+1}\left( p\right) -\gamma
_{n}\left( p\right) =2np\left( a-b+c-p\right) \\
&&-\left( c+1\right) p^{2}+2\left( c+a-b+ab\right) p-a\left( a+c\right) , \\
\Delta \gamma _{1}\left( p\right) &=&-\left( c+3\right) p^{2}+2\left(
2a-2b+2c+ab\right) p-a\left( a+c\right) , \\
\Delta \gamma _{n+1}\left( p\right) -\Delta \gamma _{n}\left( p\right)
&=&2p\left( a-b+c-p\right) =2p\delta \left( p\right) .
\end{eqnarray*}

(i) If $\delta \left( p\right) \geq \left( \leq \right) 0$, then the
sequence $\left\{ \Delta \gamma _{n}\left( p\right) \right\} _{n\geq 1}$ is
increasing (decreasing). This together with $\Delta \gamma _{1}\left(
p\right) \geq \left( \leq \right) 0$ implies that $\left\{ \gamma _{n}\left(
p\right) \right\} _{n\geq 1}$ is also increasing (decreasing). Therefore, if
$\gamma _{1}\left( p\right) \geq \left( \leq \right) 0$, then $\gamma
_{n}\left( p\right) \geq \left( \leq \right) \gamma _{1}\left( p\right) \geq
\left( \leq \right) 0$ for all $n\geq 1$.

(ii) If $\delta \left( p\right) >\left( <\right) 0$, then the sequence $%
\left\{ \gamma _{n}\left( p\right) \right\} _{n\geq 1}$ is convex (concave).
This together with $\gamma _{1}\left( p\right) <\left( >\right) 0$ and $%
\lim_{n\rightarrow \infty }\left[ n^{-2}\gamma _{n}\left( p\right) \right]
=p\delta \left( p\right) >\left( <\right) 0$ yields that there is an $%
n_{1}\geq 1$ such that $\gamma _{n}\left( p\right) <\left( >\right) 0$ for $%
1\leq n\leq n_{1}-1$ and $\gamma _{n}\left( p\right) >\left( <\right) 0$ for
$n\geq n_{1}+1$.

If $\delta \left( p\right) =0$, then%
\begin{eqnarray*}
\gamma _{n}\left( p\right) &=&-n\left[ cp^{2}-\left( a-b+c+2ab\right)
p+a\left( a+c\right) \right] +ab\left( p\left( c+1\right) -\left( c+a\right)
\right) , \\
\Delta \gamma _{n}\left( p\right) &=&cp^{2}+\left( a-b+c+2ab\right)
p-a\left( a+c\right) \equiv \Delta \gamma _{1}\left( p\right) .
\end{eqnarray*}%
This shows that the sequence $\left\{ \gamma _{n}\left( p\right) \right\}
_{n\geq 1}$ is increasing (decreasing) if $\Delta \gamma _{1}\left( p\right)
>\left( <\right) 0$. Since $\gamma _{1}\left( p\right) <\left( >\right) 0$
and $\lim_{n\rightarrow \infty }\left[ n^{-1}\gamma _{n}\left( p\right) %
\right] =\Delta \gamma _{1}\left( p\right) >\left( <\right) 0$, there is an $%
n_{1}\geq 1$ such that $\gamma _{n}\left( p\right) <\left( >\right) 0$ for $%
1\leq n\leq n_{1}-1$ and $\gamma _{n}\left( p\right) >\left( <\right) 0$ for
$n\geq n_{1}+1$.

This completes the proof.
\end{proof}

\section{Proof of Theorem \protect\ref{MT-3}}

\begin{proof}[Proof of Theorem \protect\ref{MT-3}]
Due to $b>0$ and $c>\max \left\{ b,b-a\right\} $, so $c,c+a,c-b,c+a-b>0$,
which guarantees the existence of $p_{0}^{\ast }$. It is easy to see that,
for $p=p_{0}^{\ast }$,%
\begin{equation*}
f_{p_{0}^{\ast }}\left( 1\right) =\left( \dfrac{c}{c-b}\right)
^{1/p_{0}^{\ast }}\left[ \dfrac{\Gamma \left( c\right) \Gamma \left(
c+a-b\right) }{\Gamma \left( c+a\right) \Gamma \left( c-b\right) }\right]
^{1/a}=1=f_{p_{0}^{\ast }}\left( 0\right) .
\end{equation*}

A direct computation gives%
\begin{eqnarray*}
\xi _{0}\left( p\right) &=&0\text{, \ \ \ \ \ }\xi _{1}\left( p\right) =%
\frac{a+c}{c+1}-p, \\
\xi _{2}\left( p\right) &=&\frac{1}{2}p^{2}-\frac{1}{2}p\frac{2b+3c-2ab+3}{%
c+1}-\frac{\left( a+c\right) \left( a-b-c+ab-3\right) }{\left( c+1\right)
\left( c+2\right) }.
\end{eqnarray*}

Since $p_{0}<p_{0}^{\ast }<1$, we see that%
\begin{equation*}
\xi _{1}\left( p_{0}^{\ast }\right) =\frac{a+c}{c+1}-p_{0}^{\ast
}=p_{0}-p_{0}^{\ast }<0.
\end{equation*}%
If we prove that%
\begin{equation}
g_{p_{0}^{\ast }}\left( x\right) =\sum_{n=0}^{\infty }\xi _{n}\left(
p_{0}^{\ast }\right) x^{n}\text{ is an NP-type power series with }%
g_{p_{0}^{\ast }}\left( 1\right) >0,  \label{A-gp0*-NP}
\end{equation}%
then by Proposition \ref{P-sgnS} there is an $x_{0}\in \left( 0,1\right) $
such that $g_{p_{0}^{\ast }}\left( x\right) <0$ for $x\in \left(
0,x_{0}\right) $ and $g_{p_{0}^{\ast }}\left( x\right) >0$ for $x\in \left(
x_{0},1\right) $. This, by (\ref{dfp}), implies that $f_{p_{0}^{\ast
}}\left( x\right) $ is strictly decreasing on $\left( 0,x_{0}\right) $ and
increasing on $\left( x_{0},1\right) $. It then follows that%
\begin{equation*}
f_{p_{0}^{\ast }}\left( x_{0}\right) \leq f_{p_{0}^{\ast }}\left( x\right)
<\max \left\{ f_{p_{0}^{\ast }}\left( 0\right) ,f_{p_{0}^{\ast }}\left(
1\right) \right\} =1,
\end{equation*}%
which implies that the inequality (\ref{Hm<Pm-p0*}) holds for $x\in \left(
0,1\right) $.

Note that $\left( a,b,c\right) \in \left( E_{1,1,\pm 1}\cup E_{1,\pm
1,-1}\right) \backslash E_{0,0,-1}$ can be divided into the following four
cases%
\begin{equation*}
\begin{array}{cccc}
\text{Case 1:} & \delta \left( p_{0}^{\ast }\right) \geq 0, & \Delta \gamma
_{1}\left( p_{0}^{\ast }\right) \geq 0, & \gamma _{1}\left( p_{0}^{\ast
}\right) >0; \\
\text{Case 2:} & \delta \left( p_{0}^{\ast }\right) >0, & \Delta \gamma
_{1}\left( p_{0}^{\ast }\right) \geq 0, & \gamma _{1}\left( p_{0}^{\ast
}\right) \leq 0; \\
\text{Case 3:} & \delta \left( p_{0}^{\ast }\right) =0, & \Delta \gamma
_{1}\left( p_{0}^{\ast }\right) >0, & \gamma _{1}\left( p_{0}^{\ast }\right)
\leq 0; \\
\text{Case 4:} & \delta \left( p_{0}^{\ast }\right) >0 & \Delta \gamma
_{1}\left( p_{0}^{\ast }\right) <0 & \gamma _{1}\left( p_{0}^{\ast }\right)
\leq 0.%
\end{array}%
\end{equation*}%
We now distinguish the above four cases to prove the assertion (\ref%
{A-gp0*-NP}). For convenience, the sequences $H_{n}\left( p_{0}^{\ast
}\right) $ are denoted simply by $H_{n}$ in the proof below, where $H=\eta $%
, $\lambda $, $\tau $ $\gamma $, $u$, $\xi $.

\textbf{Case 1}: $\delta \left( p_{0}^{\ast }\right) \geq 0,\Delta \gamma
_{1}\left( p_{0}^{\ast }\right) \geq 0$ and $\gamma _{1}\left( p_{0}^{\ast
}\right) >0$. By Lemma \ref{L-sgngn-p} (i) we see that $\gamma _{n}\geq 0$
for all $n\geq 1$.

\textbf{First}, although $\xi _{1}\left( p_{0}^{\ast }\right) <0$, there
must be a positive term in the sequence $\{\xi _{n}\}_{n\geq 0}$. If not,
namely, there is no positive term, then $\xi _{n}<0$ for all $n\in \mathbb{N}
$. This implies $g_{p_{0}^{\ast }}\left( x\right) <0$ for all $x\in \left(
0,1\right) $, which, by (\ref{dfp}), yields that $f_{p_{0}^{\ast }}\left(
x\right) $ is decreasing on $\left( 0,1\right) $. It results in%
\begin{equation}
1=f_{p_{0}^{\ast }}\left( 1\right) <f_{p_{0}^{\ast }}\left( x\right)
<f_{p_{0}^{\ast }}\left( 0\right) =1,  \label{1<1}
\end{equation}%
which leads to a contradiction.

\textbf{Second}, assume that $\xi _{n_{0}}$ ($n_{0}\geq 2$) is the first
positive term of the sequence $\{\xi _{n}\}_{n\geq 0}$, that is, $\xi
_{n}\leq 0$ for $1\leq n\leq n_{0}-1$ and $\xi _{n_{0}}>0$. We claim that
all terms after that ($n\geq n_{0}+1$) are positive, which indicates that $%
\left\{ \xi _{n}\right\} _{n\geq 0}$ is an NP-type sequence. We prove now
that $\xi _{n}>0$ for all $n\geq n_{0}+1$ by induction.

From Lemma \ref{L-xin-rr-a1} it follows that%
\begin{equation}
\eta _{n+1}-\lambda _{n}\eta _{n}=\tau _{n}\gamma _{n}u_{n-1},
\label{etan-rr}
\end{equation}%
where%
\begin{equation*}
\eta _{n+1}=\xi _{n+1}-\theta _{n}\xi _{n},
\end{equation*}%
and $\theta _{n},\tau _{n},\gamma _{n}>0$ for $n\geq 1$ and $\lambda _{n}>0$
for $n\geq 2$. Then%
\begin{equation}
\eta _{n_{0}+1}=\xi _{n_{0}+1}-\theta _{n_{0}}\xi _{n_{0}}=\lambda
_{n_{0}}\left( \xi _{n_{0}}+\theta _{n_{0}-1}\left( -\xi _{n_{0}-1}\right)
\right) +\tau _{n_{0}}\gamma _{n_{0}}u_{n_{0}-1}>0,  \label{etan0+1>0}
\end{equation}%
which yields $\xi _{n_{0}+1}>\theta _{n_{0}}\xi _{n_{0}}>0$.

Suppose that $\xi _{k}\geq 0$ for $n_{0}+1\leq k\leq m$. We show that $\xi
_{k+1}\geq 0$ by induction. The recurrence relation (\ref{xin-rr-2}) yields%
\begin{equation}
\eta _{k+1}-\lambda _{k}\eta _{k}=\tau _{k}\gamma _{k}u_{k-1}\geq 0
\label{etan+1>}
\end{equation}%
for $k\geq n_{0}+1$. Taking into account (\ref{etan0+1>0}) and (\ref{etan+1>}%
) we obtain%
\begin{equation*}
\eta _{k+1}>\lambda _{k}\eta _{k}>\left( \lambda _{k}\lambda _{k-1}\right)
\eta _{k-1}>\cdot \cdot \cdot >\left( \lambda _{k}\lambda _{k-1}\cdot \cdot
\cdot \lambda _{n_{0}+1}\right) \eta _{n_{0}+1}>0,
\end{equation*}%
which yields $\eta _{k+1}=\xi _{k+1}-\theta _{k}\xi _{k}>0$. Using the
induction hypothesis we immediately deduce that $\xi _{k+1}>\theta _{k}\xi
_{k}>0$. By induction we acquire that $\xi _{n}>0$ for all $n\geq n_{0}$.

\textbf{Third}, we claim that $g_{p_{0}^{\ast }}\left( 1\right) >0$. If not,
that is, $g_{p_{0}^{\ast }}\left( 1\right) \leq 0$, then by Proposition \ref%
{P-sgnS}, $g_{p_{0}^{\ast }}\left( x\right) \leq 0$ for all $x\in \left(
0,1\right) $. This also leads to the same contradiction as (\ref{1<1}).
Consequently, $\left\{ \xi _{n}\right\} _{n\geq 0}$ is an NP-type sequence,
and the assertion (\ref{A-gp0*-NP}) is valid.

\textbf{Case 2:} $\delta \left( p_{0}^{\ast }\right) >0$, $\Delta \gamma
_{1}\left( p_{0}^{\ast }\right) \geq 0$ and $\gamma _{1}\left( p_{0}^{\ast
}\right) \leq 0$. By Lemma \ref{L-sgngn-p} (ii) there is an $n_{1}\geq 1$
such that $\gamma _{n}<0$ for $1\leq n\leq n_{1}-1$ and $\gamma _{n}>0$ for $%
n\geq n_{1}+1$. Then Lemma \ref{L-xin-rr-a1} reveals%
\begin{equation}
\eta _{n+1}-\lambda _{n}\eta _{n}=\tau _{n}\gamma _{n}u_{n-1}\left\{
\begin{array}{ll}
<0 & \text{if }1\leq n\leq n_{1}-1, \\
>0 & \text{if }n\geq n_{1}+1,%
\end{array}%
\right.  \label{etan-rr-+}
\end{equation}%
where $\theta _{n},\tau _{n},\gamma _{n}>0$ for $n\geq 1$ and $\lambda
_{n}>0 $ for $n\geq 2$. We first show that $\xi _{n}=\xi _{n}\left(
p_{0}^{\ast }\right) <0$ for $1\leq n\leq n_{1}$ by induction. Since $\xi
_{0}=0$, $\xi _{1}=p_{0}-p_{0}^{\ast }<0$, we have%
\begin{equation*}
\eta _{1}=\xi _{1}-\theta _{0}\xi _{0}=\xi _{1}<0\text{.}
\end{equation*}%
Moreover, from Lemma \ref{L-l1p0*>0} we see that%
\begin{equation*}
\lambda _{1}=\lambda _{1}\left( p_{0}^{\ast }\right) =\frac{a+2c+2-\left(
3+c\right) p_{0}^{\ast }}{2\left( c+2\right) }>0\text{.}
\end{equation*}%
Using the first relation of (\ref{etan-rr-+}) for $n=1$ yields%
\begin{equation*}
\eta _{2}=\xi _{2}-\theta _{1}\xi _{1}=\lambda _{1}\eta _{1}+\tau _{1}\gamma
_{1}u_{0}<0,
\end{equation*}%
which implies that $\xi _{2}<\theta _{1}\xi _{1}<0$. Suppose that $\xi
_{k}<0 $ for $1\leq k\leq n\leq n_{1}-1$. An application of the first
relation of (\ref{etan-rr-+}) gives%
\begin{equation*}
\eta _{k+1}<\lambda _{k}\eta _{k}<\lambda _{k}\lambda _{k-1}\eta
_{k-1}<\cdot \cdot \cdot <\left( \lambda _{k}\lambda _{k-1}\cdot \cdot \cdot
\lambda _{2}\right) \eta _{2}<0,
\end{equation*}%
which implies that $\xi _{k+1}-\theta _{k}\xi _{k}<0$, and therefore, $\xi
_{k+1}<\theta _{k}\xi _{k}<0$ by the induction hypothesis. It follows from
induction that $\xi _{n}<0$ for $1\leq n\leq n_{1}$.

Second, there must be a positive term in the sequence $\left\{ \xi
_{n}\right\} _{n\geq n_{1}+1}$. If not, namely, there is no positive term,
then $\xi _{n}<0$ for all $n\geq n_{1}+1$. Thus, $\xi _{n}<0$ for $n\in
\mathbb{N}$, and then, $g_{p_{0}^{\ast }}\left( x\right) =\sum_{n=0}^{\infty
}\xi _{n}x^{n}<0$ for all $x\in \left( 0,1\right) $, which, by (\ref{dfp}),
yields that $f_{p_{0}^{\ast }}\left( x\right) $ is decreasing on $\left(
0,1\right) $. It leads to the same contradiction as (\ref{1<1}). Now assume
that $\xi _{n_{0}}$ ($n_{0}\geq n_{1}+1$) is the first positive term of the
sequence $\{\xi _{n}\}_{n\geq n_{1}+1}$, that is, $\xi _{n}\leq 0$ for $%
n_{1}+1\leq n\leq n_{0}-1$ and $\xi _{n_{0}}>0$. We claim that all terms
after that ($n\geq n_{0}+1$) are positive, which indicates that $\left\{ \xi
_{n}\right\} _{n\geq 0}$ is an NP-type sequence. We now prove that $\xi
_{n}>0$ for all $n\geq n_{0}+1\geq n_{1}+2$ by induction. Using the second
relation of (\ref{etan-rr-+}) yields%
\begin{equation*}
\eta _{n_{0}+1}=\xi _{n_{0}+1}-\theta _{n_{0}}\xi _{n_{0}}=\lambda
_{n_{0}}\left( \xi _{n_{0}}+\theta _{n_{0}-1}\left( -\xi _{n_{0}-1}\right)
\right) +\tau _{n_{0}}\gamma _{n_{0}}u_{n_{0}-1}>0,
\end{equation*}%
which gives that $\xi _{n_{0}+1}>\theta _{n_{0}}\xi _{n_{0}}>0$. Suppose
that $\xi _{k}>0$ for $n_{0}+1\leq k\leq n$. Using the second relation of (%
\ref{etan-rr-+})\ again yields%
\begin{equation*}
\eta _{k+1}>\lambda _{k}\eta _{k}>\lambda _{k}\lambda _{k-1}\eta
_{k-1}>\cdot \cdot \cdot >\left( \lambda _{k}\lambda _{k-1}\cdot \cdot \cdot
\lambda _{n_{0}+1}\right) \eta _{n_{0}+1}>0,
\end{equation*}%
which leads to $\xi _{k+1}-\theta _{k}\xi _{k}>0$, and therefore, $\xi
_{k+1}>\theta _{k}\xi _{k}>0$. By induction we arrive at $\xi _{k}>0$ for
all $k\geq n_{0}$. Consequently, $\left\{ \xi _{n}\right\} _{n\geq 0}$ is an
NP-type sequence, and the assertion (\ref{A-gp0*-NP}) holds.

\textbf{Case 3}: $\delta \left( p_{0}^{\ast }\right) =0$, $\Delta \gamma
_{1}\left( p_{0}^{\ast }\right) >0$ and $\gamma _{1}\left( p_{0}^{\ast
}\right) \leq 0$.

\textbf{Case 3.1}: $\delta \left( p_{0}^{\ast }\right) =0$, $\Delta \gamma
_{1}\left( p_{0}^{\ast }\right) >0$ and $\gamma _{1}\left( p_{0}^{\ast
}\right) <0$. From Lemma \ref{L-sgngn-p} (ii) there is an $n_{1}\geq 1$ such
that $\gamma _{n}\leq 0$ for $1\leq n\leq n_{1}-1$ and $\gamma _{n}>0$ for $%
n\geq n_{1}+1$. Then the relation (\ref{etan-rr-+}) holds. By using the same
technique as Case 2, we can prove that $\left\{ \xi _{n}\right\} _{n\geq 0}$
is an NP-type sequence, and the assertion (\ref{A-gp0*-NP}) holds.

\textbf{Case 3.2}: $\delta \left( p_{0}^{\ast }\right) =0$, $\Delta \gamma
_{1}\left( p_{0}^{\ast }\right) >0$ and $\gamma _{1}\left( p_{0}^{\ast
}\right) =0$. By Lemma \ref{L-sgngn-p} (i) we see that $\gamma _{n}\geq 0$
for all $n\geq 1$. Using the same technique as Case 1, we can prove that $%
\left\{ \xi _{n}\right\} _{n\geq 0}$ is an NP-type sequence, and the
assertion (\ref{A-gp0*-NP}) holds.

\textbf{Case 4}: $\delta \left( p_{0}^{\ast }\right) >0$, $\Delta \gamma
_{1}\left( p_{0}^{\ast }\right) <0$ and $\gamma _{1}\left( p_{0}^{\ast
}\right) \leq 0$. From Lemma \ref{L-sgngn-p} (ii) there is an $n_{1}\geq 1$
such that $\gamma _{n}\leq 0$ for $1\leq n\leq n_{1}-1$ and $\gamma _{n}>0$
for $n\geq n_{1}+1$. By using the same technique as Case 2, we can prove
that $\left\{ \xi _{n}\right\} _{n\geq 0}$ is an NP-type sequence, and the
assertion (\ref{A-gp0*-NP}) holds.

Finally, we prove that $p_{0}^{\ast }$ is the best possible. In fact,
solving the inequality%
\begin{equation*}
\lim_{x\rightarrow 1}\frac{\left[ F\left( -a,b;c;x\right) \right] ^{1/a}}{%
M_{p}\left( 1-x,b/c\right) }\leq 1.
\end{equation*}%
for $p$ gives $p\geq p_{0}^{\ast }$; this together with the increasing
property of $p\mapsto M_{p}\left( 1-x,b/c\right) $ implies that $p_{0}^{\ast
}$ can not be replaced by any smaller number than $p_{0}^{\ast }$. This
completes the proof.
\end{proof}

\section{Proof of Theorem \protect\ref{MT-4}}

\begin{proof}[Proof of Theorem \protect\ref{MT-4}]
As shown in the proof of Theorem \ref{MT-3}, $\xi _{0}\left( p\right) =0$ and%
\begin{equation*}
\xi _{1}\left( p_{0}^{\ast }\right) =\frac{a+c}{c+1}-p_{0}^{\ast
}=p_{0}-p_{0}^{\ast }>0
\end{equation*}%
due to $p_{0}^{\ast }<p_{0}<1$. If we prove that%
\begin{equation}
g_{p_{0}^{\ast }}\left( x\right) =\sum_{n=0}^{\infty }\xi _{n}\left(
p_{0}^{\ast }\right) x^{n}\text{ is an PN-type power series with }%
g_{p_{0}^{\ast }}\left( 1\right) <0,  \label{A-gp0*-PN}
\end{equation}%
then by Proposition \ref{P-sgnS} there is an $x_{0}\in \left( 0,1\right) $
such that $g_{p_{0}^{\ast }}\left( x\right) >0$ for $x\in \left(
0,x_{0}\right) $ and $g_{p_{0}^{\ast }}\left( x\right) <0$ for $x\in \left(
x_{0},1\right) $. This, by (\ref{dfp}), implies that $f_{p_{0}^{\ast
}}\left( x\right) $ is strictly increasing on $\left( 0,x_{0}\right) $ and
decreasing on $\left( x_{0},1\right) $. It then follows that%
\begin{equation*}
>1=\min \left\{ f_{p_{0}^{\ast }}\left( 0\right) ,f_{p_{0}^{\ast }}\left(
1\right) \right\} <f_{p_{0}^{\ast }}\left( x\right) \leq f_{p_{0}^{\ast
}}\left( x_{0}\right) ,
\end{equation*}%
which implies that the inequality (\ref{Hm>Pm-p0*}) holds for $x\in \left(
0,1\right) $.

Note that $\left( a,b,c\right) \in \left( E_{-1,-1,\pm 1}\cup E_{-1,\pm
1,1}\right) \backslash E_{0,0,1}$ can be divided into the following four
cases%
\begin{equation*}
\begin{array}{cccc}
\text{Case 1:} & \delta \left( p_{0}^{\ast }\right) \leq 0, & \Delta \gamma
_{1}\left( p_{0}^{\ast }\right) \leq 0, & \gamma _{1}\left( p_{0}^{\ast
}\right) <0; \\
\text{Case 2:} & \delta \left( p_{0}^{\ast }\right) <0, & \Delta \gamma
_{1}\left( p_{0}^{\ast }\right) \leq 0, & \gamma _{1}\left( p_{0}^{\ast
}\right) \geq 0; \\
\text{Case 3:} & \delta \left( p_{0}^{\ast }\right) =0, & \Delta \gamma
_{1}\left( p_{0}^{\ast }\right) <0, & \gamma _{1}\left( p_{0}^{\ast }\right)
\geq 0; \\
\text{Case 4:} & \delta \left( p_{0}^{\ast }\right) <0 & \Delta \gamma
_{1}\left( p_{0}^{\ast }\right) >0 & \gamma _{1}\left( p_{0}^{\ast }\right)
\geq 0.%
\end{array}%
\end{equation*}%
We now distinguish the above four cases to prove the assertion (\ref%
{A-gp0*-PN}).

\textbf{Case 1}: $\delta \left( p_{0}^{\ast }\right) \leq 0,\Delta \gamma
_{1}\left( p_{0}^{\ast }\right) \leq 0$ and $\gamma _{1}\left( p_{0}^{\ast
}\right) <0$. By Lemma \ref{L-sgngn-p} (i) we see that $\gamma _{n}\leq 0$
for all $n\geq 1$. \textbf{First}, although $\xi _{1}\left( p_{0}^{\ast
}\right) >0$, there must be a negative term in the sequence $\{\xi
_{n}\}_{n\geq 0}$. If not, namely, there is no negative term, then $\xi
_{n}>0$ for all $n\in \mathbb{N}$. This implies $g_{p_{0}^{\ast }}\left(
x\right) >0$ for all $x\in \left( 0,1\right) $, which, by (\ref{dfp}),
yields that $f_{p_{0}^{\ast }}\left( x\right) $ is increasing on $\left(
0,1\right) $. It results in%
\begin{equation}
1=f_{p_{0}^{\ast }}\left( 0\right) <f_{p_{0}^{\ast }}\left( x\right)
<f_{p_{0}^{\ast }}\left( 1\right) =1,  \label{1>1}
\end{equation}%
which leads to a contradiction.

\textbf{Second}, assume that $\xi _{n_{0}}$ ($n_{0}\geq 2$) is the first
negative term of the sequence $\{\xi _{n}\}_{n\geq 0}$, that is, $\xi
_{n}\geq 0$ for $1\leq n\leq n_{0}-1$ and $\xi _{n_{0}}<0$. We claim that
all terms after that ($n\geq n_{0}+1$) are negative, which indicates that $%
\left\{ \xi _{n}\right\} _{n\geq 0}$ is an PN-type sequence. We prove now
that $\xi _{n}<0$ for all $n\geq n_{0}+1$ by induction. Since%
\begin{equation}
\eta _{n_{0}+1}=\xi _{n_{0}+1}-\theta _{n_{0}}\xi _{n_{0}}=\lambda
_{n_{0}}\left( \xi _{n_{0}}+\theta _{n_{0}-1}\left( -\xi _{n_{0}-1}\right)
\right) +\tau _{n_{0}}\gamma _{n_{0}}u_{n_{0}-1}<0,  \label{etan0+1<0}
\end{equation}%
we have $\xi _{n_{0}+1}<\theta _{n_{0}}\xi _{n_{0}}<0$.

Suppose that $\xi _{k}\leq 0$ for $n_{0}+1\leq k\leq m$. We show that $\xi
_{k+1}\leq 0$ by induction. The recurrence relation (\ref{xin-rr-2}) yields%
\begin{equation}
\eta _{k+1}-\lambda _{k}\eta _{k}=\tau _{k}\gamma _{k}u_{k-1}\leq 0
\label{etan+1<}
\end{equation}%
for $k\geq n_{0}+1$. Taking into account (\ref{etan0+1<0}) and (\ref{etan+1<}%
) we obtain%
\begin{equation*}
\eta _{k+1}<\lambda _{k}\eta _{k}<\left( \lambda _{k}\lambda _{k-1}\right)
\eta _{k-1}<\cdot \cdot \cdot <\left( \lambda _{k}\lambda _{k-1}\cdot \cdot
\cdot \lambda _{n_{0}+1}\right) \eta _{n_{0}+1}<0,
\end{equation*}%
which yields $\eta _{k+1}=\xi _{k+1}-\theta _{k}\xi _{k}<0$. Using the
induction hypothesis we immediately deduce that $\xi _{k+1}<\theta _{k}\xi
_{k}<0$. By induction we acquire that $\xi _{n}<0$ for all $n\geq n_{0}$.

\textbf{Third}, we claim that $g_{p_{0}^{\ast }}\left( 1\right) <0$. If not,
that is, $g_{p_{0}^{\ast }}\left( 1\right) \geq 0$, then by Proposition \ref%
{P-sgnS}, $g_{p_{0}^{\ast }}\left( x\right) \geq 0$ for all $x\in \left(
0,1\right) $. This also leads to the same contradiction as (\ref{1>1}).
Consequently, $\left\{ \xi _{n}\right\} _{n\geq 0}$ is a PN-type sequence,
and the assertion (\ref{A-gp0*-PN}) is valid.

\textbf{Case 2:} $\delta \left( p_{0}^{\ast }\right) <0$, $\Delta \gamma
_{1}\left( p_{0}^{\ast }\right) \leq 0$ and $\gamma _{1}\left( p_{0}^{\ast
}\right) \geq 0$. By Lemma \ref{L-sgngn-p} (ii) there is an $n_{1}\geq 1$
such that $\gamma _{n}>0$ for $1\leq n\leq n_{1}-1$ and $\gamma _{n}<0$ for $%
n\geq n_{1}+1$. Then Lemma \ref{L-xin-rr-a1} reveals%
\begin{equation}
\eta _{n+1}-\lambda _{n}\eta _{n}=\tau _{n}\gamma _{n}u_{n-1}\left\{
\begin{array}{ll}
>0 & \text{if }1\leq n\leq n_{1}-1, \\
<0 & \text{if }n\geq n_{1}+1,%
\end{array}%
\right.  \label{etan-rr+-}
\end{equation}%
where $\theta _{n},\tau _{n},\gamma _{n}>0$ for $n\geq 1$ and $\lambda
_{n}>0 $ for $n\geq 2$. We first show that $\xi _{n}=\xi _{n}\left(
p_{0}^{\ast }\right) >0$ for $1\leq n\leq n_{1}$ by induction. Since $\xi
_{0}=0$, $\xi _{1}=p_{0}-p_{0}^{\ast }>0$, we have%
\begin{equation*}
\eta _{1}=\xi _{1}-\theta _{0}\xi _{0}=\xi _{1}>0\text{.}
\end{equation*}%
Moreover, from Lemma \ref{L-l1p0*>0} we see that%
\begin{equation*}
\lambda _{1}=\lambda _{1}\left( p_{0}^{\ast }\right) =\frac{a+2c+2-\left(
3+c\right) p_{0}^{\ast }}{2\left( c+2\right) }>0\text{.}
\end{equation*}%
Using the first relation of (\ref{etan-rr+-}) for $n=1$ yields%
\begin{equation*}
\eta _{2}=\xi _{2}-\theta _{1}\xi _{1}=\lambda _{1}\eta _{1}+\tau _{1}\gamma
_{1}u_{0}>0,
\end{equation*}%
which implies that $\xi _{2}>\theta _{1}\xi _{1}>0$. Suppose that $\xi
_{k}>0 $ for $1\leq k\leq n\leq n_{1}-1$. An application of the first
relation of (\ref{etan-rr+-}) gives%
\begin{equation*}
\eta _{k+1}>\lambda _{k}\eta _{k}>\lambda _{k}\lambda _{k-1}\eta
_{k-1}>\cdot \cdot \cdot >\left( \lambda _{k}\lambda _{k-1}\cdot \cdot \cdot
\lambda _{2}\right) \eta _{2}>0,
\end{equation*}%
which implies that $\xi _{k+1}-\theta _{k}\xi _{k}>0$, and therefore, $\xi
_{k+1}>\theta _{k}\xi _{k}>0$ by the induction hypothesis. It follows from
induction that $\xi _{n}>0$ for $1\leq n\leq n_{1}$.

Second, there must be a negative term in the sequence $\left\{ \xi
_{n}\right\} _{n\geq n_{1}+1}$. If not, namely, there is no negative term,
then $\xi _{n}>0$ for all $n\geq n_{1}+1$. Thus, $\xi _{n}>0$ for $n\in
\mathbb{N}$, and then, $g_{p_{0}^{\ast }}\left( x\right) =\sum_{n=0}^{\infty
}\xi _{n}x^{n}>0$ for all $x\in \left( 0,1\right) $, which, by (\ref{dfp}),
yields that $f_{p_{0}^{\ast }}\left( x\right) $ is increasing on $\left(
0,1\right) $. It leads to the same contradiction as (\ref{1>1}). Now assume
that $\xi _{n_{0}}$ ($n_{0}\geq n_{1}+1$) is the first negative term of the
sequence $\{\xi _{n}\}_{n\geq n_{1}+1}$, that is, $\xi _{n}\geq 0$ for $%
n_{1}+1\leq n\leq n_{0}-1$ and $\xi _{n_{0}}<0$. We claim that all terms
after that ($n\geq n_{0}+1$) are negative, which indicates that $\left\{ \xi
_{n}\right\} _{n\geq 0}$ is a PN-type sequence. We now prove that $\xi
_{n}<0 $ for all $n\geq n_{0}+1\geq n_{1}+2$ by induction. Using the second
relation of (\ref{etan-rr+-}) yields%
\begin{equation*}
\eta _{n_{0}+1}=\xi _{n_{0}+1}-\theta _{n_{0}}\xi _{n_{0}}=\lambda
_{n_{0}}\left( \xi _{n_{0}}+\theta _{n_{0}-1}\left( -\xi _{n_{0}-1}\right)
\right) +\tau _{n_{0}}\gamma _{n_{0}}u_{n_{0}-1}<0,
\end{equation*}%
which gives that $\xi _{n_{0}+1}<\theta _{n_{0}}\xi _{n_{0}}<0$. Suppose
that $\xi _{k}<0$ for $n_{0}+1\leq k\leq n$. Using the second relation of (%
\ref{etan-rr+-})\ again yields%
\begin{equation*}
\eta _{k+1}<\lambda _{k}\eta _{k}<\lambda _{k}\lambda _{k-1}\eta
_{k-1}<\cdot \cdot \cdot <\left( \lambda _{k}\lambda _{k-1}\cdot \cdot \cdot
\lambda _{n_{0}+1}\right) \eta _{n_{0}+1}<0,
\end{equation*}%
which leads to $\xi _{k+1}-\theta _{k}\xi _{k}<0$, and therefore, $\xi
_{k+1}<\theta _{k}\xi _{k}<0$. By induction we arrive at $\xi _{k}<0$ for
all $k\geq n_{0}$. Consequently, $\left\{ \xi _{n}\right\} _{n\geq 0}$ is a
PN-type sequence, and the assertion (\ref{A-gp0*-PN}) holds.

\textbf{Case 3}: $\delta \left( p_{0}^{\ast }\right) =0$, $\Delta \gamma
_{1}\left( p_{0}^{\ast }\right) <0$ and $\gamma _{1}\left( p_{0}^{\ast
}\right) \geq 0$.

\textbf{Case 3.1}: $\delta \left( p_{0}^{\ast }\right) =0$, $\Delta \gamma
_{1}\left( p_{0}^{\ast }\right) <0$ and $\gamma _{1}\left( p_{0}^{\ast
}\right) >0$. From Lemma \ref{L-sgngn-p} (ii) there is an $n_{1}\geq 1$ such
that $\gamma _{n}>0$ for $1\leq n\leq n_{1}-1$ and $\gamma _{n}<0$ for $%
n\geq n_{1}+1$. Then the relation (\ref{etan-rr+-}) holds. By using the same
technique as Case 2, we can prove that $\left\{ \xi _{n}\right\} _{n\geq 0}$
is a PN-type sequence, and the assertion (\ref{A-gp0*-PN}) holds.

\textbf{Case 3.2}: $\delta \left( p_{0}^{\ast }\right) =0$, $\Delta \gamma
_{1}\left( p_{0}^{\ast }\right) <0$ and $\gamma _{1}\left( p_{0}^{\ast
}\right) =0$. By Lemma \ref{L-sgngn-p} (i) we see that $\gamma _{n}\leq 0$
for all $n\geq 1$. Using the same technique as Case 1, we can prove that $%
\left\{ \xi _{n}\right\} _{n\geq 0}$ is a PN-type sequence, and the
assertion (\ref{A-gp0*-PN}) holds.

\textbf{Case 4}: $\delta \left( p_{0}^{\ast }\right) <0$, $\Delta \gamma
_{1}\left( p_{0}^{\ast }\right) >0$ and $\gamma _{1}\left( p_{0}^{\ast
}\right) \geq 0$. From Lemma \ref{L-sgngn-p} (ii) there is an $n_{1}\geq 1$
such that $\gamma _{n}>0$ for $1\leq n\leq n_{1}-1$ and $\gamma _{n}<0$ for $%
n\geq n_{1}+1$. By using the same technique as Case 2, we can prove that $%
\left\{ \xi _{n}\right\} _{n\geq 0}$ is a PN-type sequence, and the
assertion (\ref{A-gp0*-PN}) holds.

Finally, we prove that $p_{0}^{\ast }$ is the best possible. In fact,
solving the inequality%
\begin{equation*}
\lim_{x\rightarrow 1}\frac{\left[ F\left( -a,b;c;x\right) \right] ^{1/a}}{%
M_{p}\left( 1-x,b/c\right) }\geq 1.
\end{equation*}%
for $p$ gives $p\leq p_{0}^{\ast }$; this together with the increasing
property of $p\mapsto M_{p}\left( 1-x,b/c\right) $ implies that $p_{0}^{\ast
}$ can not be replaced by any larger number than $p_{0}^{\ast }$. This
completes the proof.
\end{proof}

\section{Conclusions}

In this paper, by using the signs rule of the NP or PN-type power series, we
found the second type of power mean bounds for the Gaussian hypergeometric
mean (sharp at $x=1$). Finally, we present several remarks.

\begin{remark}
From Remark \ref{R-C3,4}, our Theorems \ref{MT-3} and \ref{MT-4} may be
stronger than the result suggested in Conjecture \ref{Cj-BRT}.
\end{remark}

\begin{remark}
We believe that Conjecture \ref{Cj-BRT} is valid, which can be proved by our
Theorems \ref{MT-3} and \ref{MT-4}.
\end{remark}

\begin{remark}
Theorems \ref{MT-3} and \ref{MT-4} only provide sufficient conditions for
the second Hm-Pm inequalities. It is interesting and challenging to find the
necessary and sufficient conditions.
\end{remark}

\begin{remark}
It should be emphasize that the signs rule for the NP-type or PN-type power
series, Proposition \ref{P-sgnS}, play an important role in the proofs of
Theorems \ref{MT-3} and \ref{MT-4}. From the published literatures, this
signs rule is an efficient tool to deal with certain special functions.
\end{remark}


\begin{thebibliography}{99}
\bibitem{Yang-arXiv:2610.10581} Z.-H. Yang and J.-F. Tian, The first type of
power mean bounds for the Gaussian hypergeometric function, arXiv:2610.10581
[math.GM]. https://doi.org/10.48550/arXiv.2610.10581

\bibitem{Carlson-PAMS-16-1965} B. C. Carlson, A hypergeometric mean value,
\emph{Proc. Amer. Math. Soc.} \textbf{16} (1965), no. 4, 759--766.

\bibitem{Carlson-JMAA-7-1963} B. C. Carlson, Lauricella'sh hypergeometric
function $F_{D}$, \emph{J. Math. Anal. Appl.} \textbf{7} (1963), 452--470.

\bibitem{Carlson-PAMS-16-1966} B. C. Carlson, Some inequalities for
hypergeometric functions, \emph{Proc. Amer. Math. Soc.} \textbf{16} (1966),
32--39.

\bibitem{Barnard-CMFT-1-2001} R. W. Barnard and K. C. Richards, A note on
the hypergeometric mean value, \emph{Comput. Meth. Func. Theory} \textbf{1}
(2001), no. 1, 81--88.

\bibitem{Richards-JMAA-308-2005} K. C. Richards, Sharp power mean bounds for
the Gaussian hypergeometric function, \emph{J. Math. Anal. Appl.} \textbf{308%
} (2005), 303--313. 

\bibitem{Barnard-JIPAM-8-2007} R. W. Barnard and K. C. Richards, On
inequalities for hypergeometric analogues of the arithmetic-geometric mean,
\emph{J. Inequal. Pure Appl. Math.}, \textbf{8} (2007), no. 3, Art. 65.

\bibitem{Barnard-JMI-4-2010} R. W. Barnard, K. C. Richards and H. C.
Tiedeman, A survey of some bounds for Gauss' hypergeometric function and
related to mean, \emph{J. Math. Inequal.} \textbf{4} (2010), no. 1, 45--52.

\bibitem{Pales-JMAA-131-1988} Z. P\'{a}les, Inequalities for differences of
powers, \emph{J. Math. Anal. App.} \textbf{131} (1988), 271--281.

\bibitem{Pittinger-UBPEFSMF-680-1980} A. O. Pittinger, Inequalities between
arithmetic and logarithmic means, \emph{Univ. Beogard Publ. Elektr. Fak.
Ser. Mat. Fiz} \textbf{680} (1980), 15--18.

\bibitem{Stolarsky-AMM-87-1980} K. B. Stolarsky, The power and generalized
logarithmic means, \emph{Amer. Math. Monthly.} \textbf{87} (1980), 545--548.
https://doi.org/10.1080/00029890.1980.11995086

\bibitem{Yang-MIA-10-2007} Z.-H. Yang, On the log-convexity of two-parameter
homogeneous functions, \emph{Math. Inequal. Appl.} \textbf{10 }(2007), no.
3, 499--516. 

\bibitem{Yang-JMI-7-2013} Z.-H. Yang, Refinements of a two-sided inequality
for trigonometric functions, Journal of Mathematical Inequalities, vol. 7,
no. 4, pp. 601--615, 2013. doi:10.7153/jmi-07-57

\bibitem{Takeuchi-KMJ-39-2016} S. Takeuchi, A new form of the generalized
complete elliptic integrals, \emph{Kodai Math. J.} \textbf{39} (2016),
202--226. DOI: 10.2996/kmj/1458651700

\bibitem{Zhao-BMSS-46-2023} T. Zhao, Sharp Approximations for Complete
p-Elliptic Integral of the Second Kind byWeighted Power Means, \emph{Bull.
Malays. Math. Sci. Soc.} \textbf{46} (2023): 126.
https://doi.org/10.1007/s40840-023-01523-0

\bibitem{Qiu-JHIEE-3-1997} S.-L. Qiu and J.-M. Shen, On two problems
concerning means, \emph{J. Hangzhou Inst. Electr. Eng.} \textbf{17} (1997),
no. 3, 1--7. (in Chinese)

\bibitem{Qiu-JHIEE-20(1)-2000} S.-L. Qiu, The Muir mean and the complete
elliptic intergral of the second kind, \emph{J. Hangzhou Inst. Electr. Eng.}
\textbf{20} (2000), no. 1, 28--33. (in Chinese)

\bibitem{Alzer-JCAM-172-2004} H. Alzer and S.-L. Qiu, Monotonicity theorems
and inequalities for the complete elliptic integrals, \emph{J. Comput. Appl.
Math.} \textbf{172} (2004), no. 2, 289--312.

\bibitem{Barnard-JMA-31-2000} R.W. Barnard, K. Pearce, and K.C. Richards, An
inequality involving the generalized hypergeometric function and the arc
length of an ellipse, \emph{SIAM J. Math. Anal.} \textbf{31} (2000),
693--699.

\bibitem{Barnard-JMA-32-2000} R.W. Barnard, K. Pearce, and K.C. Richards, A
monotonicity property involving $_{3}F_{2}$ and comparisons of the classical
approximations of elliptical arc length, \emph{SIAM J. Math. Anal.} \textbf{%
32} (2000), 403--419.

\bibitem{Barnard-JMAA-260-2001} R.W. Barnard, K. Pearce, and L. Schovanec,
Inequalities for the perimeter of an ellipse, \emph{J. Math. Anal. Appl.}
\textbf{260} (2001), 295--306.

\bibitem{Yang-RACSAM-115-2021-8} Z.-H. Yang, J.-F. Tian, Y.-R. Zhu, A sharp
lower bound for the complete elliptic integrals of the first kind, \emph{%
Rev. R. Acad. Cienc. Exactas F\'{\i}s. Nat. Ser. A Mat. RACSAM} \textbf{115}
(2021), no. 1, Paper No. 8, 17 pp.

\bibitem{Yang-arXiv-1705.05703} Z.-H. Yang, J. Tian, Convexity and
monotonicity for the elliptic integrals of the first kind and applications,
arXiv:1705.05703 [math.CA]. 

\bibitem{Yang-AADM-13-2019} Z.-H. Yang and J. Tian, Convexity and
monotonicity for elliptic integrals of the first kind and applications,
\emph{Appl. Anal. Discrete Math.} \textbf{13} (2019), 240--260.

\bibitem{Yang-JMAA-462-2018} Z.-H. Yang, W.-M. Qian, Y.-M. Chu, W. Zhang, On
approximating the arithmetic-geometric mean and complete elliptic integral
of the first kind, \emph{J. Math. Anal. Appl.} \textbf{462} (2018),
1714--1726.

\bibitem{Belzunce-I:M-E-40-2007} F. Belzunce, E. Ortega, and J. M. Ruiz, On
non-monotonic ageing properties from the Laplace transform, with actuarial
applications, \emph{Insurance Math. Econom.} \textbf{40} (2007), 1--14.

\bibitem{Yang-JIA-157-2015} Z.-H. Yang, Y.-M. Chu and X.-H. Zhang, Necessary
and sufficient conditions for functions involving the psi function to be
completely monotonic, \emph{J. Inequal. Appl.} \textbf{2015} (2015): 157.

\bibitem{Yang-MIA-21-2018-2} Z.-H. Yang, W.-M. Qian, Y.-M. Chu, W. Zhang, On
approximating the error function, \emph{Math. Inequal. Appl.} \textbf{21}
(2019), no. 2, 469--479. 

\bibitem{Yang-MIA-21-2018-4} Z.-H. Yang, W.-M. Qian and Y.-M. Chu,
Monotonicity properties and bounds involving the complete elliptic integrals
of the first kind, \emph{Math. Inequal. Appl.} \textbf{21} (2018), no. 4,
1185--1199. 

\bibitem{Yang-JMAA-467-2018} Z.-H. Yang, Sharp approximations for the
complete elliptic integrals of the second kind by one-parameter means, \emph{%
J. Math. Anal. Appl.} \textbf{467} (2018), 446--461.

\bibitem{Yang-AMC-348-2019} Z.-H. Yang, Y.-M. Chu and W. Zhang, High
accuracy asymptotic bounds for the complete elliptic integral of the second
kind, \emph{Appl. Math. Comput.} \textbf{348} (2019), 552--564.

\bibitem{Yang-JMAA-470-2019} Z. Yang and J.-F. Tian, Monotonicity rules for
the ratio of two Laplace transforms with applications, \emph{J. Math. Anal.
Appl.} \textbf{470} (2019), 821--845.

\bibitem{Yang-AAA-2014-702718} Z.-H. Yang, Y.-M. Chu, and X.-J, Tao, A
double inequality for the trigamma function and its applications, \emph{%
Abstr. Appl. Anal. }\textbf{2014} (2014), Art. ID 702718, 9 pages.

\bibitem{Yang-JIA-2017-210} Z.-H. Yang, W.-M. Qian, Y.-M. Chu, W. Zhang, On
rational bounds for the gamma function, \emph{J. Inequal. Appl.} \textbf{2017%
} (2017): 210. 

\bibitem{Yang-JMI-12-2018} Z.-H. Yang and J. Tian, Monotonicity and sharp
inequalities related to gamma function, \emph{J. Math. Inequal.} \textbf{12}
(2018), no. 1, 1--22. 

\bibitem{Yang-RACSAM-115-2021} Z.-H. Yang and J.-F. Tian, Sharp bounds for
the Toader mean in terms of arithmetic and geometric means, \emph{Rev. R.
Acad. Cienc. Exactas F\'{\i}s. Nat. Ser. A Mat. RACSAM} \textbf{115}, 99
(2021). Available online at https://doi.org/10.1007/s13398-021-01040-4.

\bibitem{Zhao-RACSAM-117-2023} T. H. Zhao, M. K. Wang, Sharp bounds for the
lemniscatic mean by the weighted Holder mean, \emph{Rev. R. Acad. Cienc.
Exactas F\'{\i}s. Nat. Ser. A Mat. RACSAM} \textbf{117}, 96 (2023).
https://doi.org/10.1007/s13398-023-01429-3

\bibitem{Yang-arxiv-2204.04709} Z.-H. Yang, Recurrence relations of
coefficients involving hypergeometric function with an application, arXiv:
2204.04709 [math.CA]. 

\bibitem{Abramowitz-HMFFGMT-1972} M. Abramowitz and I. A. Stegun (Eds),
\emph{Handbook of Mathematical Functions with Formulas, Graphs, and
Mathematical Tables}, National Bureau of Standards, Applied Mathematics
Series 55, 10th printing, Dover Publications, New York and Washington, 1972.

\bibitem{Tian-RACSAM-115-2021} J.-F. Tian and Z.-H. Yang, New properties of
the divided difference of psi and polygamma functions, \emph{Rev. R. Acad.
Cienc. Exactas F\'{\i}s. Nat. Ser. A Mat. RACSAM} \textbf{15} (2021): 147.
https://doi.org/10.1007/s13398-021-01084-6

\bibitem{Yang-JMAA-455-2017} Z.-H. Yang, Some properties of the divided
difference of psi and polygamma functions, \emph{J. Math. Anal. Appl.}
\textbf{455} (2017), 761--777. https://doi.org/10.1016/j.jmaa.2017.05.081

\bibitem{Yang-IJPAM-2023} Z.-H. Yang and J.-F. Tian, The signs rule for the
Laplace integrals with applications, \emph{Indian J. Pure Appl. Math.}
https://doi.org/10.1007/s13226-023-00447-6.

\bibitem{Qi-RM-76-2021} F. Qi, Lower bound of sectional curvature of
Fisher--Rao manifold of beta distributions and complete monotonicity of
functions involving polygamma functions, \emph{Results Math.} \textbf{76}
(2021), Article 217, 16\symbol{126}pages.
https://doi.org/10.1007/s00025-021-01530-2
\end{thebibliography}
\end{document}